\newtheorem{theorem}{Theorem}[section]
\newtheorem{proposition}[theorem]{Proposition}
\newtheorem{lemma}[theorem]{Lemma}
\newtheorem{claim}[theorem]{Claim}
\newtheorem{fact}[theorem]{Fact}
\newtheorem{definition}[theorem]{Definition}
\newtheorem{conjecture}[theorem]{Conjecture}
\newtheorem{notation}[theorem]{Notation}
\theoremstyle{plain}
\numberwithin{equation}{theorem}
\theoremstyle{remark}
\newtheorem{remark}[theorem]{Remark}
\newcommand{\Fpbar}{\overline{\mathbb{F}_p}}
\newcommand{\N}{{\mathbb N}}
\newcommand{\End}{{\rm End}}
\newcommand{\isomto}{\overset{\sim}{\rightarrow}}
\newcommand{\Q}{{\mathbb Q}}
\newcommand{\C}{\mathbb C}
\newcommand{\bG}{{\mathbb G}}
\newcommand{\bP}{{\mathbb P}}
\newcommand{\Z}{{\mathbb Z}}
\newcommand{\matO}{{\mathcal O}}
\newcommand{\Fp}{\mathbb F_p}
\newcommand{\Fq}{\mathbb F_q}
\newcommand{\lra}{\longrightarrow}
\newcommand{\dra}{\dashrightarrow}
\newcommand{\tensor}{\otimes}
\newcommand{\OO}{\matO}
\newcommand{\id}{{\rm id}}
\newcommand{\Pl}{{\mathbb P}}
\newcommand{\A}{\mathbb A}
\newcommand{\Qbar}{\overline{\Q}}
\newcommand{\trdeg}{{\rm trdeg}}
\title[Zariski dense orbits]{Zariski dense orbits for regular self-maps of split semiabelian varieties  in positive characteristic}
\author{Dragos Ghioca and Sina Saleh}
\keywords{Zariski dense orbits, Medvedev-Scanlon conjecture, Moosa-Scanlon theorem for semiabelian varieties defined over finite fields}
\subjclass[2010]{Primary 14K15, Secondary 14G05}
\address{
Dragos Ghioca \\
Department of Mathematics\\
University of British Columbia\\
Vancouver, BC V6T 1Z2\\
Canada
}
\email{dghioca@math.ubc.ca}
\address{
Sina Saleh \\
Department of Mathematics\\
University of British Columbia\\
Vancouver, BC V6T 1Z2\\
Canada
}
\email{sinas@math.ubc.ca}
\begin{document}
\maketitle
\begin{abstract}
 We prove the Zariski dense orbit conjecture in positive characteristic for regular self-maps of split semiabelian varieties. 
\end{abstract}

%%%%%%%%%%%%%%%%%%%%%%%%%%%%%%%%%%%%%%%%%%%%%%%%%%%%%%%%%%%%%%%%%%%%%%%%%%%%%%%
%%%%%%%%%%%%%%%%%%%%%%%%%%%%%%%%%%%%%%%%%%%%%%%%%%%%%%%%%%%%%%%%%%%%%%%%%%%%%%%

\section{Introduction}
\subsection{Notation}
We let $\N_0:=\N\cup\{0\}$ denote the set of nonnegative integers. 
For any self-map $\Phi$ on a variety $X$ and for any integer $n\ge 0$, we let $\Phi^n$ be the $n$-th iterate of $\Phi$ (where $\Phi^0$ is the identity map $\id:=\id_X$, by definition). For a point $x\in X$ with the property that each point $\Phi^n(x)$ avoids the indeterminacy locus of $\Phi$, we denote by $\OO_\Phi(x)$ the orbit of $x$ under $\Phi$, i.e., the set of all $\Phi^n(x)$ for $n\ge 0$. We say that $x$ is preperiodic if its orbit $\OO_\Phi(x)$ is finite; furthermore, if $\Phi^n(x)=x$ for some positive integer $n$, then we say that $x$ is periodic.

%%%%%%%%%%%%%%%%%%%%%%%%%%%%%%%%%%%%%%%%%%%%%%%%%%%%%%%%%%%%%%%%%%%%%%%%%%%%%%%

\subsection{The classical Zariski dense orbit conjecture}

The following conjecture was motivated by a similar question raised by Zhang \cite{Zhang} and was formulated by Medvedev and Scanlon \cite{M-S} and by Amerik and Campana \cite{A-C}.

\begin{conjecture}
\label{conj:char0}
Let $X$ be a quasiprojective variety defined over an algebraically closed field $K$ of characteristic $0$ and let $\Phi:X\dra X$ be a dominant rational self-map. Then either there exists $\alpha\in X(K)$ whose orbit under $\Phi$ is well-defined and Zariski dense in $X$, or there exists a non-constant rational function $f:X\dra \Pl^1$ such that $f\circ \Phi=f$.
\end{conjecture}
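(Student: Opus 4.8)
The plan is to reduce Conjecture~\ref{conj:char0} to producing an invariant fibration out of the failure of a Zariski dense orbit. By a standard reduction we may assume $X$ is projective (replacing $\Phi$ by the induced dominant rational self-map on a projective model: this is harmless since invariant rational functions and well-defined Zariski dense orbits behave well under the modifications involved). Reformulating the dichotomy: the pullback $\Phi^*$ acts on the function field $K(X)$, and a nonconstant $\Phi$-invariant rational function $X\dra\Pl^1$ exists precisely when the fixed field $K(X)^{\Phi^*}$ strictly contains $K$. Thus I may assume $K(X)^{\Phi^*}=K$, and the goal becomes to exhibit $\alpha\in X(K)$ with $\OO_\Phi(\alpha)$ Zariski dense in $X$.

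The core mechanism is as follows; I would carry it out first over an uncountable field $K$ (for instance $K=\C$). Assume no point of $X(K)$ has a Zariski dense orbit, and for $\alpha\in X(K)$ set $Z_\alpha:=\overline{\OO_\Phi(\alpha)}$, a proper closed subvariety which is forward-invariant (that is $\Phi(Z_\alpha)\subseteq Z_\alpha$) and on whose top-dimensional part $\Phi$ acts with Zariski dense image. The key point is that, after passing to a suitable iterate of $\Phi$ to arrange irreducibility, the orbit closures $Z_\alpha$ of maximal generic dimension $d<\dim X$ sweep out $X$ and are pairwise equal or disjoint for very general $\alpha$: indeed if $\alpha'\in Z_\alpha$ then $\OO_\Phi(\alpha')\subseteq Z_\alpha$, hence $Z_{\alpha'}\subseteq Z_\alpha$, so $Z_{\alpha'}=Z_\alpha$ once the dimensions agree. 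Parametrizing this family through the Chow variety of $X$ yields a dominant rational map $\pi\colon X\dra T$ with $\dim T=\dim X-d\ge 1$ whose very general fibre is such a $Z_\alpha$. Since $\Phi$ carries each such fibre into itself, the induced self-map on $T$ fixes the generic point, i.e.\ $\pi\circ\Phi=\pi$; composing $\pi$ with any nonconstant rational function $T\dra\Pl^1$ produces a nonconstant $\Phi$-invariant rational function on $X$, contradicting $K(X)^{\Phi^*}=K$. Hence over uncountable $K$ a Zariski dense orbit exists. To reach an arbitrary algebraically closed field of characteristic $0$ — in particular $\overline{\Q}$ — one uses that the existence of a nonconstant invariant rational function descends between $\C$ and $\overline{\Q}$ (invariant rational functions of bounded degree form a variety cut out by equations over $\overline{\Q}$), so that over $\C$ there is a Zariski dense $\C$-orbit, and then specializes to obtain a Zariski dense $\overline{\Q}$-orbit.

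\textbf{The main obstacle} is the geometric step in the previous paragraph, and it is exactly where Conjecture~\ref{conj:char0} remains open for general quasiprojective $X$. The passage ``the maximal-dimensional orbit closures $Z_\alpha$ form a rational fibration'' presupposes that the generic dimension $d$ is attained on a Zariski dense set of $\alpha$ and that the corresponding orbit closures are the fibres of a dominant rational map; establishing this requires controlling \emph{all} $\Phi$-periodic subvarieties of $X$, which can be far too abundant (for instance uncountably many in each small dimension) to descend to a single fibration, and no such structural description is available in general. It is known when $X$ is a semiabelian variety (via the Faltings--Vojta classification of subvarieties of semiabelian varieties and its function-field refinements) and when $X$ is a surface (via the theory of dynamical degrees), and in those cases the argument above can be pushed through. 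A second, separate obstacle is the descent to a countable field: a countable union of proper $\Phi$-periodic subvarieties may exhaust $X(\overline{\Q})$, so even granting the structure theory one needs an arithmetic ingredient — a dynamical analogue of Hilbert's irreducibility theorem — to locate a $\overline{\Q}$-point whose orbit avoids all of them. The present paper handles both difficulties in the positive-characteristic setting of regular self-maps of split semiabelian varieties, using the Moosa--Scanlon description of subvarieties of semiabelian varieties over finite fields as the replacement for the general structure theory that is unavailable in characteristic $0$.
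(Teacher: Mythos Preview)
The statement you were asked to prove is Conjecture~\ref{conj:char0}, and the paper does not prove it: it is stated there as an open conjecture (the Medvedev--Scanlon / Amerik--Campana conjecture), and the paper then moves on to formulate and attack a positive-characteristic variant for split semiabelian varieties. So there is no ``paper's own proof'' to compare your proposal against.

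Your write-up is not a proof either, and you say so yourself: after sketching the orbit-closure fibration heuristic, you explicitly flag that ``the main obstacle is the geometric step\ldots and it is exactly where Conjecture~\ref{conj:char0} remains open for general quasiprojective $X$.'' That is an accurate assessment of the state of the art --- the step from ``no Zariski dense orbit'' to ``the maximal orbit closures form the fibres of a rational map to a positive-dimensional base'' is precisely the missing structural ingredient, and the descent from an uncountable field to $\overline{\Q}$ is a second genuine obstacle. But since both gaps are real and unfilled, what you have produced is a (well-informed) discussion of why the conjecture is hard and where it is known, not a proof. If the intent was to give a proof, the proposal fails at exactly the points you identify; if the intent was to explain the landscape, it does that well, but it should not be labeled a proof.
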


There are several partial results known towards Conjecture~\ref{conj:char0} (see \cite{A-C, BGSZ, BGZ, G-H, G-Sina, G-Matt, G-S, G-X, X}).

%%%%%%%%%%%%%%%%%%%%%%%%%%%%%%%%%%%%%%%%%%%%%%%%%%%%%%%%%%%%%%%%%%%%%%%%%%%%%%%%

\subsection{The picture in positive characteristic}

If $K$ has characteristic $p>0$, then Conjecture~\ref{conj:char0} does not hold due to the presence of the Frobenius endomorphism (see \cite[Remark~1.2]{G-Sina-20}). In particular, if $K=\Fpbar$ (the algebraic closure of the finite field $\Fp$), then each orbit of a point $\alpha\in X(K)$ is finite under a rational self-map $\Phi:X\lra X$ defined over $K=\Fpbar$; furthermore, $\Phi$ does not have to preserve a non-constant rational function.  So, the authors proposed the following  conjecture as a variant of conjecture \ref{conj:char0} in positive characteristic (see also \cite{G-Sina-20}).
\begin{conjecture}
\label{conj:charp}
Let $K$ be an algebraically closed field of positive transcendence degree over $\Fpbar$, let $X$ be a quasiprojective variety defined over $K$, and let $\Phi:X\dra X$ be a dominant rational self-map defined over $K$ as well. Then at least one of the following three statements must hold:
\begin{itemize}
\item[(A)] There exists $\alpha\in X(K)$ whose orbit $\OO_\Phi(\alpha)$ is Zariski dense in $X$. 
\item[(B)] There exists a non-constant rational function $f:X\dra \bP^1$ such that $f\circ \Phi=f$. 
\item[(C)] There exist positive integers $m$ and $r$, there exists a variety $Y$ defined over a finite subfield $\Fq$ of $K$ such that  $\dim(Y)\ge \trdeg_{\Fpbar}K + 1$ and there exists a dominant rational map $\tau: X \dra Y$ such that
\[
\tau \circ \Phi^m = F^r \circ \tau,
\]
where $F$ is the Frobenius endomorphism of $Y$ corresponding to the field $\Fq$.
\end{itemize}
\end{conjecture}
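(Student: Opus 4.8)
The plan is to prove the trichotomy by induction on $\dim X$, treating the Frobenius phenomenon encoded in (C) as the genuinely new feature relative to the characteristic-$0$ picture of Conjecture~\ref{conj:char0}. First I would spread $X$ and $\Phi$ out over a finitely generated $\Fp$-subalgebra of $K$ and reduce to the case where $K$ is finitely generated over $\Fpbar$ (so that $K$ admits a model over some finite field $\Fq$ and $\trdeg_{\Fpbar}K\ge 1$), and then, after a birational modification of $X$ and replacing $\Phi$ by an iterate, push as far as possible toward $\Phi$ being a regular morphism — always keeping track that a non-constant $\Phi$-invariant rational function, respectively a descent datum of the shape required in (C), defined on a dense open of $X$ extends to all of $X$, so that it suffices to prove the statement on a convenient birational model. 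The base of the induction is $\dim X=1$: if the self-map of the curve has degree $\ge 2$ then Riemann--Hurwitz forces the curve to have genus $0$ or $1$, reducing either to the rational case (handled by a Medvedev--Scanlon-type analysis on $\bP^1$ together with the arithmetic of where $\Phi$ is defined) or to the group case below; and if the self-map is a finite-order automorphism, every orbit is finite but there is a non-constant invariant rational function, so (B) holds.

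For the inductive step I would examine the field $K(X)^{\Phi}$ of $\Phi$-invariant rational functions. If $K(X)^{\Phi}\neq K$ there is a non-constant invariant rational function and (B) holds; so assume $K(X)^{\Phi}=K$. The core of the argument is then a structure theorem for the proper closed subsets of $X$ that contain an entire $\Phi$-orbit: one wants these ``orbit-closed'' subvarieties to lie in finitely many explicitly understood families, so that a $K$-point chosen away from all of them has Zariski dense orbit (giving (A)) — unless that finite list of families is itself rigid for an arithmetic reason, namely that $X$ and $\Phi$ descend, after an iterate and a dominant map to a variety $Y/\Fq$, to a power of the Frobenius of $Y$ with $\dim Y\ge \trdeg_{\Fpbar}K+1$ (giving (C)). The two inputs I would feed into this are: (i) known cases of Conjecture~\ref{conj:char0}, applied after lifting $(X,\Phi)$ to characteristic $0$, to handle the part of the dynamics that is insensitive to $p$; and (ii) when $X$ carries the structure of an algebraic group — a semiabelian variety, in particular a split one — the Mordell--Lang theorem in characteristic $p$ together with the Moosa--Scanlon description of $F$-sets, which classifies precisely the subsets of a semiabelian variety over a finite field obtained by iterating a Frobenius-twisted endomorphism and is exactly the dichotomy that separates outcome (C) from outcomes (A) and (B).

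The step I expect to be the main obstacle is precisely this classification of orbit-closed proper subvarieties for a general quasiprojective $X$. Outside of the cases with enough structure — semiabelian varieties via Mordell--Lang and $F$-sets, toric and abelian building blocks, and fibrations assembled from these — there is at present no substitute for the Moosa--Scanlon machinery, and, compounding the difficulty, the characteristic-$0$ conjecture on which the ``$p$-insensitive'' part of the argument rests is itself open for general $X$. A realistic route to an unconditional proof therefore runs through varieties where the structure theory is available, and the natural first target — the one carried out in this paper — is $X$ a split semiabelian variety with $\Phi$ a \emph{regular} self-map: there every regular self-map is, after a translation, an algebraic-group endomorphism, the invariant-function dichotomy becomes a statement about the isogeny decomposition of $X$, and the Moosa--Scanlon theorem produces the descent in (C) exactly when neither (A) nor (B) holds. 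Removing the remaining restrictions — allowing $\Phi$ merely rational, allowing non-split or general semiabelian $X$, and ultimately arbitrary $X$ — would require both a rational-map version of the $F$-set calculus and genuine progress on Conjecture~\ref{conj:char0}, and that is where the real difficulty lies.
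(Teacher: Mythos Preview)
The statement you were asked to prove is Conjecture~\ref{conj:charp}, and the paper does \emph{not} prove it: it is stated as an open conjecture, and the paper establishes only the special case where $X$ is a split semiabelian variety defined over $\Fpbar$ and $\Phi$ is a regular self-map (Theorem~\ref{thm:main4 0}). So there is no ``paper's own proof'' of this statement to compare against. Your write-up is candid about this --- you explicitly flag that the classification of orbit-closed subvarieties for general $X$ is unavailable, that the characteristic-$0$ input you want to invoke is itself conjectural, and that the realistic target is exactly the split semiabelian case treated here --- so what you have written is a well-informed research outline rather than a proof, and should be read as such.

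That said, even restricted to the split semiabelian case, your sketch diverges from what the paper actually does, and the differences are worth noting. You propose induction on $\dim X$, birational reduction, and a lift to characteristic~$0$ to isolate the ``$p$-insensitive'' part of the dynamics; the paper uses none of these. Instead it writes $\Phi=\tau_\beta\circ\psi$ with $\psi$ a group endomorphism, factors the minimal polynomial of $\psi$ over $\Z$ to split $G$ (up to isogeny and correspondences) into three pieces on which $\psi$ is respectively unipotent, has eigenvalues that are powers of Frobenius, and has eigenvalues multiplicatively independent of Frobenius (the NFP part). Each piece is handled by a direct analysis: the Moosa--Scanlon $F$-structure theorem is applied not to classify all orbit-closed subvarieties abstractly, but to show that if a specific carefully chosen orbit were non-dense, its intersection with a finitely generated $\Z[F]$-module would force polynomial--exponential identities (equations~\eqref{eq:density form equ}, \eqref{eqn:FrobOrbit_2}, \eqref{eqn:linearRecurrences2}) that are ruled out by Laurent's theorem and the sparsity results of \cite{GOSS, GOSS2}. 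The trichotomy then emerges from the block structure: (B) comes from linear dependence among the translation coordinates on the unipotent block, (C) from the Frobenius-eigenvalue block when its dimension exceeds $\trdeg_{\Fpbar}K$, and (A) otherwise. Your high-level picture (Moosa--Scanlon as the engine separating (C) from (A)/(B)) is correct, but the actual mechanism is this explicit linear-algebraic decomposition of $\End(G)\otimes\Q$ combined with diophantine input on linear recurrences, not an inductive or lifting argument.
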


Clearly (as observed also in \cite{BGZ}), conclusion~(A) would prevent conclusion~(B) in Conjecture~\ref{conj:charp}. Also, using an argument similar to the one employed in \cite[Remark~1.2]{G-Sina-20}, one sees that conclusion~(C) would also prevent conclusion~(B) to hold. On the other hand, conclusions~(A)~and~(C) are not mutually exclusive as one can easily see from the following endomorphism $\Phi:\bG_m^3\lra \bG_m^3$ defined over $\Fp(t)$ by the following rule: 
$$\Phi(x_1,x_2,x_3)=\left(x_1,x_2^p,x_3^p\right);$$
in this case, $\Phi$ leaves invariant the projection map $\pi_1:\bG_3\lra \bG_m$ on the first coordinate, while $\Phi$ induces the Frobenius endomorphism on the last two coordinates of $\bG_m^3$.

In \cite[Proposition~1.7]{X-2}, Xie proved Conjecture~\ref{conj:charp} when $\trdeg_{\Fpbar}K\ge \dim(X)$. In this case, the alternative~(C) from Conjecture~\ref{conj:charp} never occurs. In our paper we will deal with Conjecture~\ref{conj:charp} for regular self-maps of split semiabelian varieties $X$ defined over $\Fpbar$, while the field $K$ has arbitrary transcendence degree; in our setting, conclusion~(C) from Conjecture~\ref{conj:charp} occurs and furthermore, it constitutes the most delicate point for our proofs.

\subsection{Our results}

We prove our Conjecture~\ref{conj:charp} in the case of regular self-maps $\Phi$ of split semiabelian varieties $G$ defined over $\Fpbar$ (see Theorem~\ref{thm:main4 0}), i.e., $G$ is isogenous to a product of an abelian variety with a torus (or alternatively, $G$ is isogenous with a product of simple semiabelian varieties). 
The case of $G$ being isomorphic to a torus has already been proven by the authors in \cite[Theorem 1.5]{G-Sina-20}; however, the general case of split semiabelian varieties is more subtle than the case of tori.

We prove the following more precise version of Conjecture~\ref{conj:charp} for the case of regular self-maps of semiabelian varieties defined over a finite field. 
\begin{theorem}
\label{thm:main4 0}
Let $K$ be an algebraically closed field of characteristic $p$ such that $\trdeg_{\Fpbar}K\ge 1$ and let $G$ be a split semiabelian variety defined over $\Fpbar$. Let $\Phi:G \lra G$ be a dominant regular self-map defined over $K$. Then at least one of the following statements  must hold.
\begin{itemize}
\item[(A)] There exists $\alpha\in G(K)$ whose orbit under $\Phi$ is Zariski dense in $G$.
\item[(B)] There exists a non-constant rational function $f:G\dra \bP^1$ such that $f\circ \Phi=f$.
\item[(C)] There exist positive integers $m$ and $r$, a semiabelian variety $Y$ defined over a finite subfield $\Fq$ of $K$ of dimension at least equal to $\trdeg_{\Fpbar}K + 1$ and a dominant regular map $\tau:G \lra Y$ such that 
\begin{equation}
\label{eq:C}
\tau \circ \Phi^m = F^r\circ \tau,
\end{equation} 
where $F$ is the usual Frobenius endomorphism of $Y$ induced by the field automorphism $x\mapsto x^q$.
\end{itemize}
\end{theorem}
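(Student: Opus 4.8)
My plan is to normalize $\Phi$, split the problem along the toric--abelian structure of $G$, and then run a trichotomy on the linear part of $\Phi$ that yields alternative~(B) when that part is ``trivial'', alternative~(C) when it is ``Frobenius-like'' on a large quotient, and alternative~(A) otherwise. For the normalization: every regular self-map of a semiabelian variety is the composite of a group endomorphism with a translation, so I write $\Phi=\psi+\beta$ with $\psi\in\End(G)$ and $\beta\in G(K)$, dominance forcing $\psi$ to be an isogeny. Since $G$ is defined over $\Fpbar$, so is $\psi$; I fix a finite subfield $\Fq\subseteq K$ over which $G$ and $\psi$ are both defined and let $F$ be the associated $q$-power Frobenius of $G$, so that $F\psi=\psi F$. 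There is a canonical exact sequence $0\to T\to G\to A\to 0$ with $T$ the maximal subtorus ($\psi$-stable, inducing $\psi_A\in\End(A)$) and $A$ abelian, and splitness means this sequence splits up to isogeny; by transfer lemmas for the three alternatives (which I would establish separately) this lets me treat the toric and abelian parts essentially independently, the toric part being already settled by \cite[Theorem~1.5]{G-Sina-20}. Throughout, $\Phi^n=\psi^n+\beta_n$ with $\beta_n:=\sum_{i=0}^{n-1}\psi^i(\beta)$.

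The remaining analysis is organized by the behaviour of $\psi$ relative to $F$, visible from the action of the commutative $\Q$-algebra $\Q[\psi]\subseteq\End(G)\tensor\Q$: either (i) some iterate of $\psi$ restricts to the identity on a positive-dimensional quotient of $G$ (equivalently, $\psi$ has an eigenvalue that is a root of unity); or (ii) no such quotient exists, yet there are positive integers $m,r$ and a positive-dimensional quotient $\tau\colon G\to Y$ defined over $\Fq$ on which $\psi^m$ equals an $r$-fold iterate of the Frobenius of $Y$; or (iii) neither phenomenon occurs on any nonzero quotient of $G$. In case~(i) I pass to the maximal such quotient $G\to G'$ and a suitable iterate, after which $\Phi$ is a pure translation $x\mapsto x+\bar\beta$ on $G'$; if $\overline{\langle\bar\beta\rangle}=G'$ this feeds the dense-orbit construction below, and otherwise $G\to G'\to G'/\overline{\langle\bar\beta\rangle}$ is a non-constant $\Phi$-invariant morphism, giving~(B).

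Case~(ii) is the delicate point the introduction flags. With $\tau\colon G\to Y$ as above and $\psi_Y^{m}=F^{r}$ on $Y$, one has $\tau\circ\Phi^{m}=F^{r}\circ\tau+\tau(\beta_{m})$, so the ``error'' $\tau(\beta_m)$ must be removed before $\tau$ intertwines $\Phi$ with a Frobenius. My plan is to exploit $\psi_Y^{m}=F^{r}$ together with the fact that an $F$-orbit of a $K$-point of $Y$ has Zariski closure of dimension at most $\trdeg_{\Fpbar}K$, in order to show that $\tau(\beta_m)$ is either absorbed into a translation of $Y$ by an $\Fpbar$-point (after composing $\tau$ with a further isogeny) or confined to a proper $F$-stable subvariety of $Y$ onto whose quotient one descends; either way one obtains a dominant $\tau'\colon G\to Y'$ over $\Fq$ with $\tau'\circ\Phi^{m}=F^{r}\circ\tau'$. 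If $\dim Y'\ge\trdeg_{\Fpbar}K+1$ this is exactly~(C); if instead $\dim Y'\le\trdeg_{\Fpbar}K$, the Frobenius-isotypic part is too small to block density and this subcase merges with case~(iii).

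In case~(iii), and in the leftover subcases of~(i) and~(ii), I exhibit a Zariski-dense orbit: I choose $\alpha\in G(K)$ whose coordinates are as algebraically independent as $\trdeg_{\Fpbar}K$ permits and which is generic relative to $\beta$ and to the finitely many proper $\psi$-periodic sub-semiabelian varieties and positive-characteristic ``$F$-subvarieties'' of $G$, and I prove $\overline{\OO_\Phi(\alpha)}=G$, the point being that any proper $\Phi$-invariant subvariety through such an $\alpha$ should --- by a Moosa--Scanlon-type analysis of the finite-rank subgroup generated by $\alpha$, $\beta$ and their $\psi$-iterates --- force either a forbidden algebraic relation among these generic points or a quotient of type~(i) or~(ii) strictly larger than the one already removed. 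I expect this last implication to be the main obstacle: showing that, once the quotients of types~(i) and~(ii) are peeled off, nothing further obstructs a dense orbit --- the translation $\beta$ and the $F$-sets peculiar to characteristic $p$ make this Mordell--Lang bookkeeping substantially heavier than its characteristic-$0$ analogue --- together with the assembly step, in which the contributions of the toric factor, the abelian factor and the several subcases must be realized simultaneously by a single point $\alpha$ and, for alternative~(C), packaged into one variety $Y$ of dimension at least $\trdeg_{\Fpbar}K+1$.
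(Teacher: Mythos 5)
Your proposal correctly identifies the central structural idea that the paper also uses: after writing $\Phi=\psi+\beta$ with $\psi\in\End(G)$, the eigenvalues of $\psi$ should be sorted into (i) roots of unity (reducible to $1$ after iterating), (ii) powers of the Frobenius element, and (iii) ``NFP'' eigenvalues multiplicatively independent of Frobenius, and the three alternatives of the theorem attach to these three blocks respectively. But as written, the plan has several genuine gaps.

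First, your handling of case~(ii) is off. You propose to kill the error term $\tau(\beta_m)$ in $\tau\circ\Phi^m=F^r\circ\tau+\tau(\beta_m)$ by either absorbing it into a translation by an $\Fpbar$-point or confining it to a proper $F$-stable subvariety; neither mechanism is justified and the second does not make sense (there is no reason $\tau(\beta_m)$ should lie in a proper $F$-stable subvariety, and if it did this would not let you ``descend'' $\tau$). The clean point---which the paper exploits in Section~\ref{sec:reduction}---is that on the block where $\psi$ has \emph{no} eigenvalue equal to $1$, the map $\psi-\id$ is an isogeny, so one can solve $(\psi-\id)(z)=\ell_0\beta$ and conjugate $\Phi$ by the translation $\tau_z$; this removes $\beta$ entirely from the non-unipotent part \emph{before} one ever projects to $Y$, so there is no error to absorb. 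Without this step the rest of your case~(ii) argument does not get started.

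Second, splitting along the torus--abelian sequence $0\to T\to G\to A\to 0$ and treating the two factors ``essentially independently'' via transfer lemmas is not correct for alternative~(A): a dense orbit on $T$ and a dense orbit on $A$ do not combine to give a dense orbit on $T\times A$. The paper avoids this by passing to the reduced form $\prod_i C_i^{k_i}$ and performing a \emph{single} Jordan decomposition in $\prod_i M_{k_i}(\End(C_i))$, then choosing one point $\alpha$ whose coordinates are simultaneously generic in the sense required by all blocks (Propositions~\ref{prop:unipotentMain4}, \ref{prop:split}, \ref{prop:noUnity} and Theorem~\ref{thm:main3}). You do acknowledge this ``assembly step'' as an obstacle, but you have no mechanism for it; note also that the reduction to $\prod_i C_i^{k_i}$ forces one to work with finite-to-finite correspondences rather than honest endomorphisms (the almost-commutative diagrams of Section~\ref{sec:almost}), a complication you do not address.

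Third, and most importantly, the load-bearing claim in your case~(iii) and in the boundary between~(ii) and~(iii)---that once the unipotent and Frobenius-power quotients of dimension $\ge\trdeg_{\Fpbar}K+1$ are peeled off, a generic $\alpha$ has dense orbit---is exactly where the paper spends most of its effort (Propositions~\ref{prop:unipotentMain4} and~\ref{prop:noUnity}, Lemma~\ref{lemma:pEqns}). These require the Moosa--Scanlon $F$-set description of $V(K)\cap\Gamma$ \emph{and} the sparsity results of \cite{GOSS,GOSS2} on solutions to equations $P(n)=c_0+\sum_j c_j F^{\delta_j n_j}$, together with a careful choice of transcendentals $t_1,\dots,t_d$ so that the Frobenius orbit of $\alpha$ on the type-(ii) block is itself dense when that block has dimension $\le\trdeg_{\Fpbar}K$. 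Your sketch defers all of this to a ``Mordell--Lang bookkeeping'' you do not carry out, so the proposal, while pointed in the right direction, does not constitute a proof.
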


%%%%%%%%%%%%%%%%%%%%%%%%%%%%%%%%%%%%%%%%%%%%%%%%%%%%%%%%%%%%%%%%%%%%%%%%%%%%%%%

\subsection{Discussion of our proof}

Our proof of Theorem~\ref{thm:main4 0} follows the general strategy we employed in \cite{G-Sina-20} to treat the case of algebraic tori; however, there are significant complications due to the more complex structure of the endomorphism ring of a semiabelian variety compared with the power of the multiplicative group $\bG_m^N$. In  particular, Sections~\ref{sec:reduction}~and~\ref{sec: case 2} contain technical difficulties which are significantly more delicate than any of the arguments necessary for the case of  tori.

Since each regular self-map $\Phi$ of a semiabelian variety $G$ is a composition of a translation with a group endomorphism of $G$ (see \cite[Theorem 2]{Iitaka}), one needs to understand the arithmetic dynamics associated with a group endomorphism of $G$. Using the fact that $G$ is a split semiabelian variety allows us to understand better the  dynamics associated to  a group endomorphism of $G$; in particular, extending the current proof to the case of non-split semiabelian varieties would be significantly more difficult. This is not surprising since even the case of the Zariski dense orbit conjecture in characteristic $0$ was significantly more difficult for non-split semiabelian varieties as opposed to the case of abelian varieties (or tori); see the new technical ingredients one needed to introduce in \cite{G-Matt} to treat general semiabelian varieties compared to the case of abelian varieties treated in \cite{G-S}.

Furthermore, the case of semiabelian varieties \emph{not} defined over $\Fpbar$ is significantly more complicated. Indeed, in either characteristic ($0$ or $p$), in order to treat the Zariski dense orbit conjecture in the case of semiabelian varieties $G$ defined over an algebraically closed field $K$ endowed with some dominant regular self-map $\Phi$, one  considers a point $\alpha\in G(K)$ and then notes that its orbit $\OO_\Phi(\alpha)$ is contained in some finitely generated subgroup $\Gamma\subset G(K)$. If $\OO_\Phi(\alpha)$ is not Zariski dense, then its Zariski closure $V$ is a proper subvariety of $G$; a key step is exploiting the \emph{precise} structure of the intersection $V(K)\cap \Gamma$. So, it is essential for one to have a clear picture for the structure of the intersection between a proper subvariety of $G$ with a finitely generated subgroup of $G(K)$; this is something that Faltings \cite{Faltings} and Vojta \cite{Vojta} provided if $K$ has characteristic $0$, while in the case of prime characteristic $p$, Moosa and Scanlon \cite{Moosa-S} provide a precise description under the \emph{additional} assumption that $G$ is defined over $\Fpbar$. If $G$ is some arbitrary abelian variety defined over a field of characteristic $p$ (with nontrivial trace over $\Fpbar$), then Moosa and Scanlon \cite{Moosa-S-2} provide a more complicated description, which is not easy to exploit for our particular arithmetic dynamical question.

%%%%%%%%%%%%%%%%%%%%%%%%%%%%%%%%%%%%%%%%%%%%%%%%%%%%%%%%%%%%%%%%%%%%%%%%%%%%%%%

\subsection{Plan for our paper}

In Section~\ref{sec:reduction} we show that it suffices to prove Theorem~\ref{thm:main4 0} when $G$ is a product of simple semiabelian varieties:
\begin{equation}
\label{G reduced}
G=\prod_{i=1}^r C_i^{k_i},
\end{equation}
for some positive integers $k_i$, where the $C_i$'s are non-isogenous simple semiabelian varieties (see Section~\ref{sec:reduction}, especially Lemmas~\ref{lem:C2},~\ref{lem:C1}~and~\ref{lem:C3} and Theorem~\ref{thm:main4 0 2}). In particular, our reduction requires a very careful analysis of the dynamics not only of group endomorphisms of a split semiabelian variety, but also of finite-to-finite correspondences, as defined in Section~\ref{sec:correspondences}. This last complication was not encountered when one deals with the case of tori (see \cite{G-Sina-20}).

Then the rest of our proof  is dedicated to proving Theorem~\ref{thm:main4 0 2} in the case $G$ is of the form \eqref{G reduced} (i.e., $G$ is \emph{reduced}, according to Definition~\ref{def:reduced}). Since the group endomorphisms of a reduced semiabelian variety $G$ is isomorphic to  a product of matrix rings  $\prod_{i=1}^r M_{k_i,k_i}(E_i)$, where $E_i:=\End(C_i)$ is a subring of a (possibly) skew field, then in Section~\ref{sec:technical} we present several useful facts regarding skew fields and the rings $E_i$ above which appear as endomorphism rings for some simple semiabelian variety defined over $\Fpbar$. Also, in Section~\ref{sec:technical}, we present several other useful technical facts to be used later in our proofs, such as the $F$-structure theorem of Moosa-Scanlon for the intersection of a subvariety with a finitely generated group (see \cite{Moosa-S, Moosa-S-2} and also \cite{G-TAMS}).  In Section~\ref{sec:unipotent}, we prove Theorem~\ref{thm:main4 0 2} in the case $\Phi$ is a unipotent group endomorphism of $G$. In Section~\ref{sec: case 2} we prove Theorem~\ref{thm:main4 0 2} in the case when $\Phi$ is a group endomorphism whose eigenvalues for its induced action on each $C_i^{k_i}$ (see \eqref{G reduced}) are  powers of the  Frobenius elements from the endomorphism rings of each $C_i$. This last case is the instance when alternative~(C) from Conjecture~\ref{conj:charp} occurs and therefore, it requires a very careful analysis (significantly more in-depth than what was needed in the case $G$ was an algebraic torus).  Sections~\ref{sec:non-unipotent}~and~\ref{sec:split_case} are dedicated to proving a couple of mixed cases for Theorem~\ref{thm:main4 0 2}, which are technical ingredients for finishing the proof of our main result in Section~\ref{sec:conclusion}.

%%%%%%%%%%%%%%%%%%%%%%%%%%%%%%%%%%%%%%%%%%%%%%%%%%%%%%%%%%%%%%%%%%%%%%%%%%%%%%%
%%%%%%%%%%%%%%%%%%%%%%%%%%%%%%%%%%%%%%%%%%%%%%%%%%%%%%%%%%%%%%%%%%%%%%%%%%%%%%%

\section{Technical background}
\label{sec:technical}

In this Section we gather the various technical background results we need from the theory of matrices over skew fields (see Section~\ref{sec:skew}) and the theory of semiabelian varieties (see Sections~\ref{sec:semiabelian}~to~\ref{sec:semiabelian_end}).

%%%%%%%%%%%%%%%%%%%%%%%%%%%%%%%%%%%%%%%%%%%%%%%%%%%%%%%%%%%%%%%%%%%%%%%%%%%%%%%

\subsection{Matrices over skew fields}
\label{sec:skew}

%\begin{fact}[Jordan normal form theorem]
%\label{fact:jordanForm}
%Let $K$ be a skew field with perfect centre $k$ and $F$ a
%commutative field containing an algebraic closure of $k$ as well as an
%element $\lambda$ transcendental over $k$. Form the coproduct $K \ast_k F$ (see %chapter 5 of \cite{Cohn}) and take an
%extension $L$ which is matrix-homogeneous (see p. 234 of \cite{Cohn}) with %centre $k$. Then every matrix
%$A$ over $L$ has a conjugate of the form
%\begin{equation}
%J_{\alpha_1, r_1} \bigoplus \cdots \bigoplus J_{\alpha_n, r_n} \bigoplus %\lambda I_s    
%\end{equation}
%where $r + s = n$, the order of $A$, and $\alpha_1, \dots, \alpha_n \in %\overline{k}$. We refer to $\alpha_1, \dots, \alpha_n$ as the eigenvalues of %$A$.
%\end{fact}
%\begin{proof}
%For a proof see \cite[Theorem 8.3.6]{Cohn}.
%\end{proof}

\begin{fact}
\label{fact:jordan-normal-form}
Let $K$ be a skew field with centre $k$ and $A \in M_n(K)$ be a matrix with a minimal polynomial equal to $(x - \alpha)^r$ for some $\alpha \in k$ and $r \in \N$. Then, there exist an invertible matrix $P \in M_n(K)$ such that 
\[
P^{-1}AP = J_{\alpha, r_1} \bigoplus \cdots \bigoplus J_{\alpha, r_m},
\]
where $J_{\alpha,s}$ is the $s$-by-$s$ Jordan canonical matrix having unique eigenvalue $\alpha$ and its only nonzero entries away from the diagonal being the entries in positions $(i,i+1)$ (for $i=1,\dots, s-1$), which are all equal to $1$. 
\end{fact}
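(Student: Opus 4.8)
The plan is to reduce to the classical (commutative) theory by working over the centre $k$, using the hypothesis that $\alpha \in k$ so that $A - \alpha I$ is a $k$-linear operator with minimal polynomial $x^r$, hence nilpotent. First I would replace $A$ by $N := A - \alpha I$; since $\alpha$ lies in the centre $k$, conjugation by any $P \in M_n(K)$ transforms $A$ into $P^{-1}AP = (P^{-1}NP) + \alpha I$, so it suffices to produce a $P$ putting $N$ into a direct sum of nilpotent Jordan blocks $J_{0,r_1}\oplus\cdots\oplus J_{0,r_m}$. The matrix $N$ satisfies $N^r = 0$ but $N^{r-1}\neq 0$ (the minimal polynomial is exactly $(x-\alpha)^r$).

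The key step is to carry out the standard cyclic-vector / Jordan-chain construction, but being careful that $K$-vector spaces here means \emph{left} (or \emph{right}) $K$-modules, which are free with a well-defined dimension since $K$ is a skew field, and $N$ is a left $K$-linear endomorphism of $K^n$. I would argue by induction on $n$: since $N^{r-1}\neq 0$, pick a vector $v$ with $N^{r-1}v\neq 0$; then $v, Nv, \dots, N^{r-1}v$ are left-linearly independent over $K$ (the usual argument: a minimal dependence, hit with a suitable power of $N$, forces a contradiction — this works verbatim over a skew field because one only ever multiplies vectors on the left by scalars and applies the $K$-linear map $N$). This spans an $N$-invariant free submodule $W$ of rank $r$ on which $N$ acts as a single Jordan block $J_{0,r}$. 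One then needs an $N$-invariant complement $W'$ with $K^n = W \oplus W'$; this is the one genuinely delicate point.

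The main obstacle is exactly the existence of this $N$-stable complement, i.e.\ the semisimplicity-type step that over a field is handled by choosing a complement to $\mathrm{im}(N^{r-1})$ inside $\ker(N)$ and lifting. Over a skew field the same linear algebra goes through — submodules of free $K$-modules are free, short exact sequences of $K$-modules split, and $N$ being $K$-linear means all the relevant kernels and images are $K$-submodules — so the classical proof (e.g.\ via the primary/cyclic decomposition of the $K[x]$-module $K^n$, where $K[x]$ acts with $x$ sending $u\mapsto Nu$, noting $x$ is central among the operators we use) transports with only notational changes; alternatively one may invoke that $K[x]$ is a (non-commutative but) principal ideal domain and $K^n$ is a finitely generated torsion module annihilated by $x^r$, then apply the structure theorem for f.g.\ modules over a PID, which holds for non-commutative PIDs in the form needed here. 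Either way, once $W'$ is produced, induction on $\dim_K W' < n$ finishes the decomposition of $N$, and hence of $A = N + \alpha I$, into the desired Jordan blocks; assembling the cyclic bases into the columns of $P$ gives the invertible conjugating matrix. I expect writing the skew-field-careful version of "a minimal linear dependence among $v, Nv,\dots,N^{r-1}v$ is impossible" and the splitting lemma to be the only places demanding attention; everything else is bookkeeping.
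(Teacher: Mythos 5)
The paper's own ``proof'' is a bare citation to Cohn's book, whereas you reconstruct the argument from scratch; so this is genuinely a different (and more informative) route, and the substance of your plan is correct. Two small points are worth flagging. First, a sidedness error: if $N\in M_n(K)$ acts on column vectors by left multiplication, then $N$ commutes with the \emph{right} scalar action $v\mapsto vc$, not the left one, since $N(vc)=(Nv)c$ while $N(cv)\neq c(Nv)$ in general; so $K^n$ must be treated as a \emph{right} $K$-module and $N$ as a right $K$-linear endomorphism (equivalently, work with row vectors and left modules, but be consistent). Your phrase ``left $K$-linear'' and ``multiplies vectors on the left by scalars'' should be replaced accordingly; once corrected, the linear-independence computation for $v,Nv,\dots,N^{r-1}v$ goes through verbatim (multiply the dependence relation's coefficients on the right and apply $N^{r-1-i_0}$).

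Second, the harder of your two routes to the invariant complement is the hand-waved ``classical proof transports''; the cleaner route is the one you give second, via $K[x]$ with $x$ central. Since $K[x]$ has a division algorithm (leading coefficients are units), it is a left and right principal ideal domain, and $K^n$ is a finitely generated right $K[x]$-module (with $v\cdot x := Nv$; compatibility with the right $K$-action uses precisely that $x$ is central) that is torsion, annihilated by the central element $(x-\alpha)^r$. The structure theory for such modules gives the decomposition into cyclic summands $K[x]/(x-\alpha)^{r_i}K[x]$, from which the block form follows on choosing the evident chain bases and assembling them into $P$. With the sidedness fixed and the PID step stated explicitly, your proof is complete; the ``elementary'' complement-lifting alternative also works but requires spelling out the filtration of $\ker N$ by $\mathrm{im}(N^i)\cap\ker N$ over a skew field, which you did not actually carry out.
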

\begin{proof}
This is a consequence of the Jordan normal form theorem (see \cite[Theorem~8.3.6]{Cohn}, and the discussion on pages 382 and 383). 
\end{proof}

\begin{fact}
\label{fact:split}
Let $K$ be a skew field. Suppose that $A \in M_{n}(K)$ is a matrix with minimal polynomial $p(x) = p_1(x)p_2(x)$ over $k$ where $p_1, p_2 \in k[x]$ and $p_1$ and $p_2$ are coprime. There exists an invertible matrix $P \in M_{n}(K)$ such that $P^{-1}A P = A_1 \bigoplus A_2$ where the minimal polynomial of $A_1$ and $A_2$ over $k$ are $p_1$ and $p_2$, respectively. 
\end{fact}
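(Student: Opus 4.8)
The plan is to reduce the statement to the classical primary decomposition theorem over the \emph{commutative} polynomial ring $k[x]$, exploiting the fact that, although $K$ is noncommutative, the module structure on $K^n$ governing the action of $A$ only involves the centre $k$. The one point to handle with care is the left/right bookkeeping with the noncommutative scalars.

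First I would fix conventions as in \cite{Cohn}: view $V:=K^n$ as a \emph{right} $K$-vector space (column vectors, scalars acting on the right), so that $M_n(K)=\End_K(V)$ acting by left matrix multiplication, and in particular $A$ is a right-$K$-linear endomorphism of $V$. Since $k$ is the centre of $K$, the $k$-subalgebra $k[A]\subseteq M_n(K)$ is commutative and, as a $k$-algebra, is isomorphic to $k[x]/(p(x))$ with $x$ acting as $A$. Thus $V$ becomes a module over $k[x]/(p(x))$, and this action commutes with the right $K$-action. The crucial consequence is that for any $q\in k[x]$ the subset $\ker q(A)\subseteq V$ is a right-$K$-subspace (this uses precisely the right-scalar convention) which is moreover $A$-invariant, since $A$ commutes with every element of $k[A]$.

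Next, since $p_1$ and $p_2$ are coprime in the PID $k[x]$, I would pick a Bézout identity $a_1p_1+a_2p_2=1$ with $a_i\in k[x]$ and set $E_1:=(a_2p_2)(A)$ and $E_2:=(a_1p_1)(A)$ in $k[A]$. A direct computation shows $E_1+E_2=I$, $E_1E_2=E_2E_1=0$ and $E_i^2=E_i$, so $V=V_1\oplus V_2$ with $V_i:=E_iV$; another short computation identifies $V_i=\ker p_i(A)$. Each $V_i$ is thus a right-$K$-subspace which is $A$-invariant, so $A_i:=A|_{V_i}$ makes sense and $p_i(A_i)=0$, whence the minimal polynomial $m_i$ of $A_i$ over $k$ divides $p_i$. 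Because $V=V_1\oplus V_2$, a polynomial in $k[x]$ annihilates $A$ iff it annihilates both $A_1$ and $A_2$, so $p=\operatorname{lcm}(m_1,m_2)$; as $m_1,m_2$ are coprime (dividing the coprime $p_1,p_2$) and $\deg p_1+\deg p_2=\deg p$, a degree count forces $m_i=p_i$ for $i=1,2$.

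Finally I would choose a right-$K$-basis of $V_1$ followed by a right-$K$-basis of $V_2$; the resulting change-of-basis matrix $P\in M_n(K)$ is invertible, and by the usual transformation law for matrices of right-linear endomorphisms over a skew field one gets $P^{-1}AP=A_1\bigoplus A_2$ in block form, with $A_i$ of minimal polynomial $p_i$ over $k$, as desired. I expect the only genuine obstacle to be verifying that the elementary facts of linear algebra invoked along the way — additivity of dimension for the direct sum, the behaviour of the matrix of an endomorphism under change of basis, and the claim that kernels of elements of $k[A]$ are $K$-subspaces — remain valid over a skew field; all of these are part of the theory developed in \cite[Chapter~8]{Cohn} already used for Fact~\ref{fact:jordan-normal-form}. (Alternatively one could iterate Fact~\ref{fact:jordan-normal-form}, but that route is less natural and would implicitly require the $p_i$ to factor into prime powers over $k$, which we do not want to assume.)
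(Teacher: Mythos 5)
Your proposal is correct and is essentially the paper's proof: the authors simply remark that ``the proof is identical as in the case when $K$ is a (commutative) field,'' and your write-up is exactly that — the standard Bézout/idempotent primary decomposition argument, with the extra care (centrality of $k$, right-$K$-module conventions, kernels of elements of $k[A]$ being $K$-subspaces) needed to confirm that it transfers verbatim to the skew-field setting.
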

\begin{proof}
The proof is identical as in the case when $K$ is a (commutative) field. 
\end{proof}

%%%%%%%%%%%%%%%%%%%%%%%%%%%%%%%%%%%%%%%%%%%%%%%%%%%%%%%%%%%%%%%%%%%%%%%%%%%%%%%%

\subsection{Semiabelian varieties}
\label{sec:semiabelian}

We recall that a semiabeian variety $G$ defined over an algebraically closed field $L$ is an algebraic group variety, which is an extension of an abelian variety $A$ by a torus $\bG_m^N$:
\begin{equation}
\label{eq:ses}
1\lra \bG_m^N\lra G\lra A\lra 1.
\end{equation}
We say that $G$ is \emph{split} if the short exact sequence of algebraic groups from \eqref{eq:ses} splits. In this case, $G$ is isogenous to a product of \emph{simple} semiabelian varieties (i.e., semiabelian varieties which contain no proper semiabelian varieties).  

As previously noted (see \cite{Iitaka}), any regular self-map on a semiabelian variety $G$ is a composition of a group endomorphism of $G$ with a translation map $\tau_{\beta}$ (where $\tau_\beta(x)=x+\beta$ for each $x\in G$).

\begin{definition}
\label{def:reduced}
We define a split semiabelian variety $G$ to be \textbf{reduced} if $G$ is isomorphic to
\begin{equation}
     \prod_{i = 1}^rC_i^{k_i},
\end{equation}
where $k_1, \dots, k_r \in \N$ and $C_1, \dots,C_r$ are simple semiabelian varieties that are pairwise non-isogenous.
\end{definition}

%%%%%%%%%%%%%%%%%%%%%%%%%%%%%%%%%%%%%%%%%%%%%%%%%%%%%%%%%%%%%%%%%%%%%%%%%%%%%%%

\subsection{Correspondences on semiabelian varieties}
\label{sec:correspondences}

For a semiabelian variety $G$, a \emph{correspondence} or \emph{finite-to-finite map} is a dominant map  $\varphi\in \frac{1}{m}\cdot \End(G)$ for some $m\in\N$. In other words, there exists a positive integer $m$ such that composing the multiplication-by-$m$ map $[m]_G$ on $G$ with $\varphi$ yields a well-defined,  dominant endomorphism of $G$. Clearly, for each point $\alpha\in G$, we have that $\varphi(\alpha)$ consists of at most $m^{2\dim(G)}$ points, which all differ by a torsion point of $G$ of order dividing $m$.

%%%%%%%%%%%%%%%%%%%%%%%%%%%%%%%%%%%%%%%%%%%%%%%%%%%%%%%%%%%%%%%%%%%%%%%%%%%%%%%

\subsection{Almost commutative diagrams}
\label{sec:almost}

We call an \emph{almost commutative diagram}, a diagram of the following form:
\begin{equation}
\label{diagram:altogether 0}
\begin{tikzcd}
G' \arrow[r, "\Psi"]  \arrow[d, "g"'] & G' \arrow[d, "g"] \\
G  \arrow[r, "\Phi"] & G
\end{tikzcd}    
\end{equation}
where $G$ and $G'$ are semiabelian varieties, $g:G'\lra G$ is an isogeny, $\Psi:G'\lra G'$ is a group endomorphism, while $\Phi:G\lra G$ is a correspondence such that there exists a positive integer $m_0$ for which
\begin{equation}
\label{eq:almost 0}
[m_0]_G\circ g\circ \Psi = [m_0]_G\circ \Phi\circ g.
\end{equation}
In particular, we have that   $[m_0]_G\circ \Phi$ is a well-defined, regular endomorphism of $G$. Furthermore,   letting  $\hat{g}:G\lra G'$ be an isogeny such that 
$$\hat{g}\circ g=[m]_{G'}\text{ and }g\circ \hat{g}=[m]_G,$$
for some positive integer $m$ divisible by $m_0$, we obtain:  
\begin{equation}
\label{eq:almost 1}
[m]_G\circ\Phi = g\circ \Psi\circ \hat{g}.
\end{equation} 
Furthermore, since $m_0$ divides $m$, we have that $[m]_G\circ\Phi$ from \eqref{eq:almost 1} is a well-defined endomorphism. In particular, we see that we can take $m_0:=m$ in equation~\eqref{eq:almost 0} and thus, $\Phi\in \frac{1}{m}\cdot \End(G)$ is defined through equation~\eqref{eq:almost 1}.

%%%%%%%%%%%%%%%%%%%%%%%%%%%%%%%%%%%%%%%%%%%%%%%%%%%%%%%%%%%%%%%%%%%%%%%%%%%%%%%

\subsection{Some technical definitions and notation relevant to our proofs}
\label{sec:semiabelian_end}

We will also employ the following notation regarding the endomorphisms of a (simple) semiabelian variety. 
\begin{notation}
\label{not:endomorphism}
In our paper, for a simple semiabelian variety $C$ defined over $\Fpbar$, there exists a finite subfield $\Fq$ of $\Fpbar$ such that:
\begin{itemize}
\item[(1)] $C$ is defined over $\Fq$; and
\item[(2)] each group endomorphism of $C$ is also defined over $\Fq$.
\end{itemize}
So, the (group) endomorphisms of $C$ defined over $\Fpbar$ belong to a ring $\End(C)=\End_{\Fpbar}(C)$. Following the notation of Milne \cite{Milne}, we let $\End^0:=\End(C)\otimes_{\Z}\Q$; then $L_C:=\End^0(C)$ is a skew field (all of whose elements are algebraic over $\Q$). We identify $\End(C)$ with a given subring $E_C$ of $L_C$; when there is no confusion, we will drop the index $C$ from our notation. 

Finally, we let $F$ be the Frobenius endomorphism of $C$ corresponding to the field $\Fq$ and we denote by $F_C$ its image in $E_C$.
\end{notation}

\begin{fact}
\label{fact:Frobenius}
With the same convention as in Notation~\ref{not:endomorphism} for $C$, $E_C$, $L_C$ and $F_C$, we have that $L_C$ is a skew field whose center $k_C$ contains $F_C$ (see \cite[Chapter~2]{Milne}). 
\end{fact}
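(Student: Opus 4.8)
The plan is to deduce both assertions from the structure theory of endomorphism algebras of simple semiabelian varieties as recorded in Milne~\cite{Milne}, rather than proving anything new. First I would recall the short exact sequence $1\to \bG_m^N\to C\to A\to 1$ exhibiting the simple semiabelian variety $C$ over $\Fpbar$; since $C$ is simple, this forces one of the two extreme cases: either $N=0$ and $C=A$ is a simple abelian variety, or $A$ is trivial and $C=\bG_m$ (as the only simple tori are one-dimensional, and a nontrivial torus part together with a nontrivial abelian quotient would give a proper nonzero semiabelian subvariety). The torus case is immediate: $\End(\bG_m)=\Z$, so $L_C=\Q$ is a (commutative) field equal to its own centre, and $F_C=q\in\Q=k_C$. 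So the content is entirely in the abelian variety case.

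For $C$ a simple abelian variety over $\Fpbar$, the first step is to invoke the classical fact (Milne~\cite{Milne}, Chapter~2) that $\End^0(C)=\End(C)\otimes_\Z\Q$ is a division algebra of finite dimension over $\Q$: simplicity of $C$ means it has no nontrivial abelian subvarieties, hence by Poincaré reducibility every nonzero endomorphism is an isogeny and thus invertible in $\End^0(C)$, so $L_C$ is a skew field, all of whose elements are algebraic over $\Q$. Next, I would identify the Frobenius element: since $C$ is defined over the finite field $\Fq$, the $q$-power Frobenius $F$ is a well-defined endomorphism of $C$, giving $F_C\in E_C\subset L_C$. The key point is that $F_C$ lies in the centre $k_C$ of $L_C$: this is precisely Tate's theory of endomorphisms of abelian varieties over finite fields (see Milne~\cite{Milne}, Chapter~2), where one shows that $\Q(F_C)$ is the centre of $\End^0(C)$ for a simple abelian variety over $\Fq$ — or at the very least that $F_C$ commutes with every element of $\End(C)$, because every endomorphism of $C$ over $\Fpbar$ is already defined over $\Fq$ (this is exactly condition~(2) in Notation~\ref{not:endomorphism}) and hence commutes with the Frobenius of $\Fq$, a purely Galois-theoretic observation since $F$ acts as a morphism compatible with base field automorphisms.

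I do not anticipate a genuine obstacle here; the statement is a bookkeeping consequence of standard results, and the only care needed is in the reduction to the two cases ($C=\bG_m$ or $C$ simple abelian) and in making sure the normalization in Notation~\ref{not:endomorphism} — in particular that $\Fq$ is chosen large enough that all endomorphisms of $C$ are $\Fq$-rational — is what licenses the centrality claim. If one wanted to avoid citing the sharp form of Tate's theorem, the weaker fact that $F_C$ is central already follows formally from the observation that for any $\psi\in\End_{\Fq}(C)$ one has $F\circ\psi=\psi^{(q)}\circ F=\psi\circ F$, the middle equality being the compatibility of Frobenius with the $q$-power map on coefficients and the last using that $\psi$ is defined over $\Fq$ so $\psi^{(q)}=\psi$. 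That identity, applied to a set of generators of $\End(C)$, gives $F_C\in k_C$ directly.
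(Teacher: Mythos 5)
Your proof is correct and relies on the same sources as the paper, which handles this Fact purely by reference to \cite[Chapter~2]{Milne} with no written argument. You have simply made explicit what the citation delegates: the split of a simple semiabelian variety over $\Fpbar$ into the two extreme cases $C=\bG_m$ (where everything is trivial since $\End(\bG_m)=\Z$) and $C$ a simple abelian variety (where Poincar\'e reducibility gives that $L_C=\End^0(C)$ is a division algebra), together with the standard identity $F\circ\psi=\psi^{(q)}\circ F=\psi\circ F$ for $\psi\in\End_{\Fq}(C)$, which is exactly what Notation~\ref{not:endomorphism}(2) is set up to license and which yields $F_C\in k_C$ without needing the full strength of Tate's computation of the center. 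This is a legitimate and slightly more self-contained rendering of the paper's citation, not a different mathematical route.
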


Next definition is motivated by condition~(C) in Theorem~\ref{thm:main4 0}.
% \begin{definition}
% Let $C$ be a simple abelian variety defined over $\Fq$, along with $E_C=\End(C)$, $L_C=\End^0(C)$ and with the image $F_C\in E_C$ of the Frobenius endomorphism $F$ of $C$ over $\Fq$, as in Notation~\ref{not:endomorphism}. 
% Let $k\in\N$ and let $A \in M_{k, k}(E_C)$. For every positive integer $n$ we define $\JF_{m, n}(A)$ to be the maximum $\ell$ for which there exist $\ell$ Jordan blocks in the Jordan normal form of $A$ corresponding to eigenvalues $\lambda_1, \dots, \lambda_\ell$ (contained in the algebraic closure of $k_C$), where 
% \[
% \lambda_1^m = \cdots = \lambda_k^m = F_C^n.
% \]
% \end{definition}

\begin{definition}[NFP matrices]
\label{def:NFP}
Let $C$ be a simple  semiabelian variety along with the convention from Notation~\ref{not:endomorphism} for $E_C$, $L_C$ and $F_C$. For any $n \in \N$, a matrix $A \in M_{n, n}(L_C)$ is called an NFP (No Frobenius Power) matrix whenever the minimal polynomial $P(x)$ of $A$ over $\Q\left(F_{C}\right)$ has no roots that are multiplicatively dependent with respect to $F_{C}$ (i.e., no root $\lambda$ of $P(x)$ satisfies $\lambda^m=F_C^k$ for some integers $m$ and $k$, not both equal to $0$). 
\end{definition}

%%%%%%%%%%%%%%%%%%%%%%%%%%%%%%%%%%%%%%%%%%%%%%%%%%%%%%%%%%%%%%%%%%%%%%%%%%%%%%%

\subsection{The intersection of a subvariety of a semiabelian variety defined over a finite field with a finitely generated subgroup}
\label{sec:M-S}

We conclude this technical background section with stating the $F$-structure theorem of Moosa-Scanlon for the intersection of a subvariety of an $\Fq$-semiabelian variety $G$ with a finitely generated subgroup of $G(K)$ (where $K$ is some arbitrary algebraically closed field containing $\Fq$).  In order to state Theorem~\ref{Moosa-Scanlon theorem} (which is an essential ingredient for our proofs), we need first to introduce the notion of \emph{$F$-sets} defined by Moosa-Scanlon \cite{Moosa-S}. The Frobenius $F$ acting on $G$ is the endomorphism induced by the usual field homomorphism given by $x\mapsto x^q$ for each $x\in K$.

\begin{definition} 
\label{definition F-sets}
With the above notation for $G$, $q$, $K$ and $F$, let $\Gamma\subseteq G(K)$ be a finitely generated $\Z[F]$-module.
\begin{itemize}
\item[(a)]
By a \emph{sum of $F$-orbits in $\Gamma$} we mean a set of the form
$$C(\gamma,\alpha_1,\dots,\alpha_m;k_1,\dots,k_m):=\left\{\gamma+\sum_{j=1}^m F^{k_jn_j}(\alpha_j) \colon n_j\in\mathbb{N}_0\right\}\subseteq\Gamma$$
where $\gamma,\alpha_1,\dots,\alpha_m$ are some given points in $G(K)$ and $k_1,\dots,k_m$ are some given positive integers. 
\item[(b)]
An \emph{$F$-set} in $\Gamma$ is a set of the form 
$C+ \Gamma '$ where $C$ is a sum of $F$-orbits in $\Gamma$, and $\Gamma '\subseteq \Gamma$ is a subgroup, while in general, for two sets $A,B\subset G(K)$, $A+ B$ is simply the set $\{a+ b\colon a\in A\text{,  }b\in B\}$. 
\end{itemize}
\end{definition}

We note that since we allow the base points $a_i$ be outside $\Gamma$, we can use the slightly simpler definition of groupless $F$-sets involving sums of $F$-orbits rather than using the $F$-cycles (see \cite[Remark~2.6]{Moosa-S}, and also the  extension proven in \cite{G-TAMS}). We also refer to \cite[Section~2.2]{CGSZ} for a more in-depth discussion of the structure of $F$-sets.

\begin{theorem}[Moosa-Scanlon \cite{Moosa-S}]
\label{Moosa-Scanlon theorem}
Let $G$ be a semiabelian variety defined over $\Fq$, let $\Fq\subset K$ be an algebraically closed field, let $V\subset G$ be a subvariety defined over $K$ and let $\Gamma\subset G(K)$ be a finitely generated subgroup. Then $V(K)\cap \Gamma$ is a finite union of $F$-sets contained in $\Gamma$.
\end{theorem}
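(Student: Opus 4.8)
The displayed result is the Moosa–Scanlon $F$-structure theorem; to sketch a proof I would reconstruct their argument, combining the Mordell–Lang theorem in positive characteristic with an analysis tailored to the isotrivial (constant) case, organised as a Noetherian induction. \emph{Reductions first.} Every finitely generated subgroup $\Gamma_0\subseteq G(K)$ is contained in a finitely generated $\Z[F]$-submodule $\Gamma$ of $G(K)$, and $V(K)\cap\Gamma_0=(V(K)\cap\Gamma)\cap\Gamma_0$; so it suffices to treat $\Gamma$ a finitely generated $\Z[F]$-module, provided one also knows that the class of finite unions of $F$-sets in such a module is stable under intersecting with a subgroup. This last point is purely combinatorial: for a sum of $F$-orbits $\gamma+\sum_j F^{k_jn_j}(\alpha_j)$, the set of exponent tuples $(n_j)$ making it land in a fixed subgroup is cut out by $F$-linear congruences, and the solution set is again a finite union of ``$F$-patterns''; this, together with the normal-form theory of $F$-sets it rests on, is worked out in \cite{Moosa-S, CGSZ}. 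One further reduces to $V$ geometrically irreducible and, after enlarging $\Fq$, assumes $G$ and all relevant semiabelian subvarieties are defined over $\Fq$; it is also convenient to replace $\Gamma$ at the outset by a fixed finite iterated Frobenius-division $F^{-N}(\Gamma)$, still finitely generated, so that the maps $x\mapsto F^\ell(x)$ occurring later can be inverted inside the ambient module.

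\emph{Peeling off the group-like locus.} By the characteristic-$p$ form of the Ueno–Kawamata theorem, the locus $Z(V)$ of positive-dimensional cosets of semiabelian subvarieties contained in $V$ is itself a finite union of such cosets $c_i+H_i$. Each $(c_i+H_i)(K)\cap\Gamma$ is empty or a coset of $H_i(K)\cap\Gamma$, hence an $F$-set. Treating $Z(V)\cap\Gamma$ this way and recursing on dimension, one reduces to the case in which $V$ contains \emph{no} coset of a positive-dimensional semiabelian subvariety, where the goal sharpens to: $V(K)\cap\Gamma$ is a finite union of \emph{sums of $F$-orbits} — an infinite subgroup summand would force a positive-dimensional coset inside $V$, now excluded.

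\emph{The core: producing the $F$-orbits.} Assume $V$ contains no positive-dimensional coset; passing to the Zariski closure of $V(K)\cap\Gamma$ (which again contains no such coset), assume $V(K)\cap\Gamma$ is dense in $V$, and if it is finite we are done. Here is the crux: because $G$ is isotrivial, the Mordell–Lang theorem in characteristic $p$ (Hrushovski; see also the algebraic treatments of Pink–Rössler and others) cannot conclude that $V(K)\cap\Gamma$ is a finite union of cosets — the isotrivial case is exactly its exception — and the exceptional alternative, made explicit, forces a Frobenius twist: $V(K)\cap\Gamma$ equals a finite union of sets $c_i+F^{\ell_i}\bigl(W_i(K)\cap\Gamma\bigr)$ with $c_i\in\Gamma$, $\ell_i\in\N$, each $c_i+F^{\ell_i}(W_i)\subseteq V$, and each $W_i$ strictly smaller than $V$ in the $(\dim,\deg)$ ordering (typically because $W_i$ maps into a proper semiabelian subvariety under some quotient). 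By the induction hypothesis each $W_i(K)\cap\Gamma$ is a finite union of sums of $F$-orbits; applying $F^{\ell_i}$ and translating by $c_i$ carries a sum of $F$-orbits to a sum of $F$-orbits (it merely rescales the indices $k_j$ and shifts the base point), and taking the finite union proves the claim in this case. Combined with the previous step, this gives the theorem.

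\emph{Main obstacle.} The substance lies in the core step. Since the isotrivial case is the genuine exception to Mordell–Lang, it cannot be invoked as a black box: one must run the finer dichotomy that pins down which subquotients the Frobenius twist factors through and how it meets $\Gamma$ — including the bookkeeping of inverting $F^{\ell_i}$ inside the module, absorbed by the Frobenius-division of the reduction step — and one must check that the descent parameter strictly drops at each stage, so that the recursion halts after boundedly many steps and only finitely many $F$-orbit summands appear. This is precisely the technical heart of \cite{Moosa-S}. A secondary, self-contained difficulty is the combinatorial closure of finite unions of $F$-sets under intersection with subgroups used in the reduction, which rests on the normal-form analysis of $F$-sets in finitely generated $\Z[F]$-modules.
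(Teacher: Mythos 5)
The paper does not prove this statement: it is quoted verbatim from Moosa--Scanlon \cite{Moosa-S} (with the refinement of \cite[Remark~2.6]{Moosa-S} and the extension in \cite{G-TAMS}) and used throughout as a black box, so there is no ``paper's own proof'' against which to compare your argument.

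As a reconstruction of the Moosa--Scanlon argument, your sketch captures the right structural outline --- reduce to a finitely generated $\Z[F]$-module, peel off the locus of cosets of positive-dimensional semiabelian subvarieties, then run a descent in which the isotrivial exception to the characteristic-$p$ Mordell--Lang dichotomy produces a Frobenius twist and a strictly smaller variety --- and you are candid that the technical heart is being deferred. Two cautions are worth flagging. First, the actual proof in \cite{Moosa-S} is model-theoretic, carried out in the theory of separably closed fields using Hrushovski's socle theorem and the modular/non-modular dichotomy for minimal types; the ``finer dichotomy'' you gesture at is really a statement about definable internality to the constants versus $F$-orientation, not merely an explicit algebraic alternative that one reads off from Mordell--Lang, so presenting it as a Noetherian induction on $(\dim,\deg)$ compresses a genuinely different machine into familiar-looking bookkeeping. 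Second, the ``combinatorial closure of finite unions of $F$-sets under intersection with subgroups'' that your first reduction relies on is not a routine fact; it is itself one of the nontrivial structural lemmas in \cite{Moosa-S} (and in the streamlined account of \cite[Section~2.2]{CGSZ}), and stating it as something that ``rests on the normal-form analysis'' understates the work involved. None of this makes your sketch wrong, but a reader should be aware that the substance of both of the steps you isolate as ``main obstacle'' and ``secondary difficulty'' is precisely the content of the cited papers, and that the paper under review deliberately does not reprove it.
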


\begin{remark}
\label{rem:M-S}
Furthermore, according to \cite[Remark~2.6]{Moosa-S}, if $\Gamma$ is a finitely generated $\Z[F]$-submodule of $G(K)$, then the $F$-sets appearing in the intersection $V\cap\Gamma$ from Theorem~\ref{Moosa-Scanlon theorem} are of the form $C(\gamma,\alpha_1,\dots,\alpha_m;k_1,\dots,k_m)+\Gamma '$ (see Definition~\ref{definition F-sets}) where for some positive integer $\ell$, we have that 
$$\ell\cdot \gamma,\ell\cdot\alpha_1,\dots, \ell\cdot \alpha_m\in \Gamma.$$ 
Finally, in our proof, we prefer to use the notation
$$\Sigma(\alpha_1,\dots,\alpha_m;k_1,\dots,k_m):=\left\{\sum_{j=1}^m F^{k_jn_j}(\alpha_j) \colon n_j\in\mathbb{N}_0\right\}$$
for a sum of $F$-orbits (for given points $\alpha_j\in G(K)$ and positive integers $k_j$).
\end{remark}

%%%%%%%%%%%%%%%%%%%%%%%%%%%%%%%%%%%%%%%%%%%%%%%%%%%%%%%%%%%%%%%%%%%%%%%%%%%%%%%
%%%%%%%%%%%%%%%%%%%%%%%%%%%%%%%%%%%%%%%%%%%%%%%%%%%%%%%%%%%%%%%%%%%%%%%%%%%%%%%

\section{Reducing Theorem \ref{thm:main4 0} to the case of reduced split semiabelian varieties} 
\label{sec:reduction}

In this Section we show that it suffices to prove Theorem~\ref{thm:main4 0} when $G$ is a reduced semiabelian variety (see Theorem~\ref{thm:main4 0 2}). We start by recalling the setup from Theorem~\ref{thm:main4 0}. We have an algebraically closed field $K$ of positive transcendence degree over $\Fpbar$ and we have a split semiabelian variety $G$ defined over $\Fpbar$. 

Let $\Psi:G\lra G$ be a dominant regular self-map. Then $\Psi:=\tau_{\beta} \circ \psi$, where $\psi: G \lra G$ is a dominant group endomorphism and $\tau_{\beta}:G\lra G$ is the translation-by-$\beta$ map on $G$ (for some point $\beta\in G(K)$). Then for each $n\in\N$, we have that
\begin{equation}
\label{eq:formula iterate}
\Psi^n=\tau_{\sum_{j=0}^{n-1}\psi^j(\beta)}\circ \psi^n.
\end{equation}

The group endomorphism $\psi$ is integral over $\Z$ (see \cite[Section~2.1]{CGSZ}); so, we denote by $g_\psi$ the minimal monic polynomial with integer coefficients for which $g_\psi(\psi)=0$. Since $\psi$ is dominant, then each root of $g_\psi$ is nonzero. 

%%%%%%%%%%%%%%%%%%%%%%%%%%%%%%%%%%%%%%%%%%%%%%%%%%%%%%%%%%%%%%%%%%%%%%%%%%%%%%%

\subsection{Reduction to the case the roots of $g_\psi$ are not roots of unity of order greater than $1$}

We first note the following reduction in Theorem~\ref{thm:main4 0}. 

\begin{proposition}
\label{prop:iterate reduction}
In order to prove Theorem \ref{thm:main4 0} for the dynamical system $(G,\Psi)$, it suffices to prove Theorem~\ref{thm:main4 0} for the dynamical system $(G,\Psi^n)$ for some $n\in\N$. 
\end{proposition}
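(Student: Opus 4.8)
The plan is to assume Theorem~\ref{thm:main4 0} holds for $(G,\Psi^n)$ and then to check, case by case, that each of the alternatives~(A), (B), (C) for $(G,\Psi^n)$ forces the corresponding alternative for $(G,\Psi)$. First one notes that $\Psi^n$ is again a dominant regular self-map of $G$ (a composition of dominant regular maps), so Theorem~\ref{thm:main4 0} does apply to $(G,\Psi^n)$ and one of (A), (B), (C) must hold for it.

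Alternatives (A) and (C) transfer at once. If some $\alpha\in G(K)$ has $\OO_{\Psi^n}(\alpha)$ Zariski dense in $G$, then $\OO_{\Psi}(\alpha)\supseteq\OO_{\Psi^n}(\alpha)$ is Zariski dense as well, so (A) holds for $(G,\Psi)$. If (C) holds for $(G,\Psi^n)$ via integers $m,r$, a semiabelian variety $Y$ defined over a finite $\Fq\subseteq K$ with $\dim Y\ge\trdeg_{\Fpbar}K+1$, and a dominant regular $\tau\colon G\lra Y$ with $\tau\circ(\Psi^n)^m=F^r\circ\tau$, then $\tau\circ\Psi^{nm}=F^r\circ\tau$, so (C) holds for $(G,\Psi)$ with the integers $nm$ and $r$ and the same $Y$, $\Fq$, $\tau$.

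The only case requiring an argument is (B). Suppose $f\colon G\dra\bP^1$ is non-constant with $f\circ\Psi^n=f$; view $f$ as a non-constant element of the function field $K(G)$, and put $f_i:=f\circ\Psi^i$ for $i=0,\dots,n-1$. Since $\Psi$ is dominant, $\Psi^{*}\colon K(G)\to K(G)$ is an injective ring homomorphism, so each $f_i$ is non-constant, and $\Psi^{*}$ maps $f_i\mapsto f_{i+1}$ for $i<n-1$ and $f_{n-1}\mapsto f\circ\Psi^n=f_0$; hence $\Psi^{*}$ permutes $\{f_0,\dots,f_{n-1}\}$ cyclically. It follows that for each $j$ the elementary symmetric function $e_j(f_0,\dots,f_{n-1})\in K(G)$ is fixed under composition with $\Psi$. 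If all of $e_1(f_0,\dots,f_{n-1}),\dots,e_n(f_0,\dots,f_{n-1})$ were constant, then every $f_i$ would be a root of a fixed monic degree-$n$ polynomial over $K$, hence algebraic over $K$; as $K$ is algebraically closed in $K(G)$, this forces every $f_i\in K$, contradicting that $f=f_0$ is non-constant. So some $g:=e_j(f_0,\dots,f_{n-1})$ is a non-constant rational function $G\dra\bP^1$ with $g\circ\Psi=g$, giving alternative~(B) for $(G,\Psi)$.

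Combining the three cases proves the proposition. The only non-formal point is case~(B): one cannot expect $f$ itself to be $\Psi$-invariant, so one replaces it by a symmetric function of its $\Psi$-translates $f,\,f\circ\Psi,\,\dots,\,f\circ\Psi^{n-1}$, and the single thing to verify is that these symmetric functions do not all collapse to constants, which is the short algebraicity argument above.
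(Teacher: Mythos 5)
Your proposal is correct and follows the same strategy as the paper: verify that each of alternatives (A), (B), (C) for the iterate $(G,\Psi^n)$ transfers to $(G,\Psi)$, with (A) and (C) being immediate and (B) requiring a genuine argument. The paper simply cites \cite[Lemma~2.1]{BGSZ} for the (A) and (B) transfers; your elementary-symmetric-function argument for the (B) case (together with the observation that $K$ is algebraically closed in $K(G)$) is exactly the standard proof that the citation is delegating to, so you have unfolded the cited lemma rather than taken a different route.
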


\begin{proof}
It is clear that if condition~(C) holds for an iterate of $\Psi$ then it also holds for $\Psi$. The fact that if conditions~(A)~and~(B) hold for an iterate of $\Psi$ then they also hold for $\Psi$ follows from \cite[Lemma~2.1]{BGSZ}. 
\end{proof}

After replacing $\Psi$ by a suitable iterate (see Proposition~\ref{prop:iterate reduction} and also formula~\ref{eq:formula iterate}) we may assume without loss of generality that the roots of the minimal polynomial of $\psi$ (over $\Z$) that are roots of unity are actually  all equal to one. 

%%%%%%%%%%%%%%%%%%%%%%%%%%%%%%%%%%%%%%%%%%%%%%%%%%%%%%%%%%%%%%%%%%%%%%%%%%%%%%%

\subsection{Writing the minimal polynomial of $\psi$ as a product of two coprime polynomials with special properties}
\label{subsec:previous}

Let $g:=g_\psi\in \Z[x]$ be the minimal polynomial for the endomorphism $\psi$. As explained in the previous section, we may assume that each root of $g$ is either equal to $1$ or not a root of unity.

We let $s\in\N_0$ be the order of $1$ as a root of $g(x)$. We write 
$h_1(x):=(x-1)^s$; then we can write $g(x) := h_1(x) \cdot h_2(x)$ for some  polynomial $h_2(x)$ with integer coefficients whose roots are not roots of unity. Furthermore, 
$h_1(x)$ and $h_2(x)$ are coprime polynomials. 

%%%%%%%%%%%%%%%%%%%%%%%%%%%%%%%%%%%%%%%%%%%%%%%%%%%%%%%%%%%%%%%%%%%%%%%%%%%%%%%

\subsection{Splitting the action of $\Psi$ to an action on a product of two special semiabelian varieties}

We continue with the notation for $h_1(x)$ and $h_2(x)$ from Section~\ref{subsec:previous} and we let $G_1 := h_2(\psi)\big(G\big)$ and $G_2 := h_1(\psi)\big(G\big)$. Then $G_1$
and $G_2$ are both connected algebraic subgroups of $G$ (note that either $G_1$ or $G_2$ may be the trivial group). Since $h_1$ and $h_2$ are coprime, then there exist polynomials with integer
coefficients $Q_1(x)$ and $Q_2(x)$ along with some positive integer $\ell_0$ such that
\[
Q_1(x)\cdot h_1(x) + Q_2(x)\cdot h_2(x) = \ell_0,
\]
which means that $G_1$ and $G_2$ are complementary subgroups of $ G$, in the sense that $ G = G_1 + G_2$, while $G_1\cap G_2$ is finite (consisting only
of points of order dividing $\ell_0$). Thus, for each $x\in G$ one can find $x_1 \in G_1$ and $x_2\in G_2$ such that $x=x_1+x_2$; even though $x_1$ and $x_2$ are not uniquely defined by $x$, since $G_1\cap G_2$ consists only of points of order dividing $\ell_0$, we conclude that the isogeny $\iota:G\lra G_1\times G_2$ given by 
\begin{equation}
\label{eq:well-defined}
x\mapsto (\ell_0x_1,\ell_0x_2)\text{ is well-defined.}
\end{equation}

Furthermore, $\psi$ induces endomorphisms of both $G_1$ and $G_2$; call them
$\psi_1$, respectively $\psi_2$. In addition, 
\begin{equation}
\label{eq:poly_1}
\text{the minimal polynomial of $\psi_1$
is $h_1(x)=(x-1)^s$,} 
\end{equation}
\begin{equation}
\label{eq:poly_2}
\text{while the minimal polynomial of $\psi_2$ is $h_2(x)$.} 
\end{equation}

Since $G_1+G_2=G$, then there exist $\beta_1\in G_1(K)$ and $\beta_2\in G_2(K)$ such that $\beta_1+\beta_2=\beta$. Furthermore, according to \eqref{eq:well-defined}, regardless of our choice of $(\beta_1,\beta_2)\in G_1\times G_2$ for which $\beta_1+\beta_2=\beta$, we have that the pair $(\ell_0\beta_1,\ell_0\beta_2)$ is unchanged. 

Now, we define $\Psi_1:G_1\lra G_1$ and $\Psi_2:G_2\lra G_2$ given by
\begin{equation}
\label{eq:Psi_i}
\Psi_i(x)=\psi_i(x)+\ell_0\beta_i\text{ for }i=1,2.
\end{equation}
Then, using the isogeny $\iota$ (see \eqref{eq:well-defined}) along with the definition of $\Psi_1$ and $\Psi_2$ (see \eqref{eq:Psi_i}), we have that the following diagram commutes
\begin{equation}
\label{diagram:G1timesG2}
\begin{tikzcd}
G \arrow[r, "\Psi"] \arrow[d, "\iota"] & G \arrow[d, "\iota"]\\
G_1 \times G_2 \arrow[r, "{(\Psi_1, \Psi_2)}"] & G_1 \times G_2.
\end{tikzcd}
\end{equation}

%%%%%%%%%%%%%%%%%%%%%%%%%%%%%%%%%%%%%%%%%%%%%%%%%%%%%%%%%%%%%%%%%%%%%%%%%%%%%%%

\subsection{Reduction of the action of $\psi_1$ to an endomorphism of a reduced  split semiabelian variety}

Since $G_1$ is a semiabelian subvariety of $G$, then also $G_1$ is a split semiabelian variety and must be isogenous to a semiabelian variety
\begin{equation}
G'_1 := \prod_{i=1}^r C_i^{k_i},    
\end{equation}
where the $C_i$'s are non-isogenous simple semiabelian varieties. More
precisely, we have an isogeny
\begin{equation}
\pi : G_1 \lra G'_1    
\end{equation}
and another isogeny
\begin{equation}
\hat{\pi} : G'_1\lra G_1    
\end{equation}
along with some positive integer $m_1$ such that
\begin{equation}
\label{eq:pi}
\pi\circ \hat{\pi} = [m_1]_{G'_1}, \quad \hat{\pi} \circ \pi = [m_1]_{G_1}. 
\end{equation}

Consider $\varphi_1' \in \End(G'_1) \otimes \Q$ given by
\begin{equation}
\label{eq:pi_2}
\varphi_1' := \frac{1}{m_1}\pi\circ \psi_1\circ \hat{\pi}.
\end{equation}
Then $\varphi_1'$ corresponds to a direct sum $A_1 \oplus \cdots \oplus A_r$ of matrices in
$$\prod_{i=1}^r M_{k_i}\left(\End(C_i)^0\right).$$  
Furthermore, since the minimal polynomial of each $A_i$ over $\Z$ is of the form $(x-1)^{s_i}$ for some integer $s_i\le s$ (see \eqref{eq:poly_1}), then each $A_i$ is a unipotent matrix. So, using Fact~\ref{fact:jordan-normal-form}, there exist matrices $P_i \in M_{k_i}(\End(C_i)^0)$ such that $P_iA_i P_i^{-1}$ is of the form 
\[
B_i = \bigoplus_{j=1}^{\ell_i} J_{1, m_j^{(i)}}
\]
for some $\ell_i\in\N$ and some positive integers $m_j^{(i)}$ such that 
$$\sum_{j=1}^{\ell_i}m_j^{(i)}=k_i.$$
This means (see Section~\ref{sec:almost}) that there must exist $\sigma, \hat{\sigma}: G'_1 \lra G'_1$ such that 
\begin{equation}
\label{eq:sigma}
\sigma \circ \hat{\sigma} = \hat{\sigma} \circ \sigma = [m_1']_{G'_1}
\end{equation}
for some $m_1' \in \N$ and  
\begin{equation}
\label{eq:sigma_2}
\varphi_1 = \frac{1}{m_1'}\sigma \circ \varphi_1' \circ \hat{\sigma},
\end{equation}
where $\varphi_1$ is the endomorphism corresponding to $B_1\oplus \cdots \oplus B_r$. 

%%%%%%%%%%%%%%%%%%%%%%%%%%%%%%%%%%%%%%%%%%%%%%%%%%%%%%%%%%%%%%%%%%%%%%%%%%%%%%%

\subsection{Reducing to the case $\Psi_2$ is a group endomorphism}

Now, since the minimal polynomial of $\psi_2$ (which is $h_2(x)$, according to \eqref{eq:poly_2}) does not have any roots that are equal to one, then we have that $\psi_2-{\rm id}_{G_2}$ is an dominant group endomorphism of $G_2$ and therefore, we can find $z\in G_2(K)$ such that 
\begin{equation}
\label{eq:kill beta_2}
\left(\psi_2-{\rm id}_{G_2}\right)(z)=\ell_0\beta_2.
\end{equation}
So, letting $\tau:G_2\lra G_2$ be the translation-by-$z$ map, then \eqref{eq:kill beta_2} yields that
\begin{equation}
\label{eq:psi_2 endomorphism}
\psi_2 :=\tau\circ \Psi_2\circ \tau^{-1}\text{ is an endomorphism of }G_2.
\end{equation}

%%%%%%%%%%%%%%%%%%%%%%%%%%%%%%%%%%%%%%%%%%%%%%%%%%%%%%%%%%%%%%%%%%%%%%%%%%%%%%%

\subsection{Reducing the dynamical system on $G$ to a simpler dynamical system on $G_1'\times G_2$}

We let $\nu:=\sigma\circ \pi$ and using \eqref{eq:pi}, \eqref{eq:pi_2}, \eqref{eq:sigma} and \eqref{eq:sigma_2}, we get that 
\begin{equation}
\label{eq:nu}
\nu\circ \psi_1=\varphi_1\circ \nu.
\end{equation}
We let $\Phi_1:G_1'\lra G_1'$ given by $\Phi_1(x)=\varphi_1(x)+\nu(\ell_0\beta_1)$. Since $\Psi_1:G_1\lra G_1$ is given by $\Psi_1(x)=\psi_1(x)+\ell_0\beta_1$ (see \eqref{eq:Psi_i}), then we conclude that
\begin{equation}
\label{eq:nu_2}
\nu\circ \Psi_1=\Phi_1\circ \nu.
\end{equation}
So, letting $g_1 := (\nu, \tau)$ and also using \eqref{eq:psi_2 endomorphism} and \eqref{eq:nu_2}, then we get the next commutative diagram 
\begin{equation}
\label{diagram:G'1timesG2}
\begin{tikzcd}
G_1 \times G_2 \arrow[r, "{(\Psi_1, \Psi_2)}"] \arrow[d, "g_1"] & G_1 \times G_2 \arrow[d, "g_1"]\\
G'_1 \times G_2 \arrow[r, "{(\Phi_1, \psi_2)}"] & G'_1 \times G_2.
\end{tikzcd} 
\end{equation}

%%%%%%%%%%%%%%%%%%%%%%%%%%%%%%%%%%%%%%%%%%%%%%%%%%%%%%%%%%%%%%%%%%%%%%%%%%%%%%%%

\subsection{Deconstructing the action of $\psi_2$ on $G_2$ using correspondences}
   
Since $G_2$ is a semiabelian subvariety of a split semiabelian variety, then also $G_2$ is isogenous to a reduced split semiabelian variety 
\begin{equation}
G'_2 := \prod_{i=1}^{r'} \left(C'_i\right)^{k'_i}.    
\end{equation}
Moreover, one can choose the components $C'_i$ so that for any $1 \le i \le r'$ and $1 \le j \le r$, $C'_i$ is isogenous to $C_j$ if and only if $C'_i = C_j$. In other words, the simple semiabelian components of $G'_1$ and $G'_2$ are either equal or they are non-isogenous. So, we have isogenies
\begin{equation}
\pi' : G_2 \lra G'_2, \quad \hat{\pi}' : G'_2\lra G_2 
\end{equation}
along with some positive integer $n_1$ such that
\begin{equation}
\label{eq:pi'}
\pi'\circ \hat{\pi}' = [n_1]_{G'_2}, \quad \hat{\pi}' \circ \pi' = [n_1]_{G_2}. 
\end{equation}

Now, consider $\varphi'_2\in \End(G_2')\otimes \Q$ given by
\begin{equation}
\label{eq:pi'_2}
\varphi'_2 := \frac{1}{n_1}\pi'\circ (\psi_2)\circ \hat{\pi}'. 
\end{equation}
Then  $\varphi'_2$ is a finite-to-finite map (or correspondence), i.e., it sends any finite subset
of $G'_2$ into another finite subset of $G'_2$. Also, we see that $\varphi'_2$ can be represented naturally in
\begin{equation}
\prod_{i=1}^{r'} M_{k'_i, k'_i}\left(\frac{1}{n_1} D_i\right),  
\end{equation}
where $D_i := \End(C'_i)$, while $\frac{1}{n_1} D_i$ means that we allow denominator $n_1$ for each entry in the corresponding matrices. We also fix an embedding of each $\Q(F_{C'_i})$ (for $i=1,\dots, r$) into $\Qbar$.

%%%%%%%%%%%%%%%%%%%%%%%%%%%%%%%%%%%%%%%%%%%%%%%%%%%%%%%%%%%%%%%%%%%%%%%%%%%%%%%%

\subsection{Linearizing the action of $\varphi_2'$ on $G_2'$}

The action of $\varphi'_2\in\End(G_2')\otimes \Q$ corresponds to a direct sum of matrices 
$$\tilde{A}_{\varphi_2'}:=A'_1 \oplus \cdots \oplus A'_{r'} \in \prod_{i=1}^{r'} M_{k'_i, k'_i}\left(\frac{1}{n_1} D_i\right).$$

%%%%%%%%%%%%%%%%%%%%%%%%%%%%%%%%%%%%%%%%%%%%%%%%%%%%%%%%%%%%%%%%%%%%%%%%%%%%%%%

\subsection{The minimal polynomial of $\tilde{A}_{\varphi_2'}$}

Using equations \eqref{eq:pi'} and \eqref{eq:pi'_2}, we see that for each $n\in\N$, we have that
\begin{equation}
\label{eq:same_denominator}
(\varphi'_2)^n := \frac{1}{n_1}\pi'\circ (\psi_2)^n\circ \hat{\pi}'
\end{equation}
and therefore, the minimal polynomial for $\tilde{A}_{\varphi_2'}$ (which is the matrix in $\End(G_2')\otimes\Q$ corresponding to $\varphi_2'$) is the same as the minimal polynomial of $\psi_2$ as an endomorphism of $G'_2$. Furthermore, using \eqref{eq:psi_2 endomorphism} along with \eqref{eq:poly_2}, we conclude that the minimal polynomial for $\tilde{A}_{\varphi_2'}$ is $h_2(x)$.

\begin{remark}
\label{rem:same_denominator}
Equation~\eqref{eq:same_denominator} also yields that for each $x\in G'_2$, we have that for any $n\in\N$ and for any two points $y,z\in (\varphi_2')^n(x)$ (i.e., for any two points $y$ and $z$ associated to $x$ by the correspondence $(\varphi_2')^n$), we have that
\begin{equation}
\label{eq:same_difference}
y-z\in G'_2[n_1]\text{ (i.e., it is a torsion point of order dividing $n_1$).}
\end{equation}
\end{remark}

%%%%%%%%%%%%%%%%%%%%%%%%%%%%%%%%%%%%%%%%%%%%%%%%%%%%%%%%%%%%%%%%%%%%%%%%%%%%%%%

\subsection{Separating the roots of $h_2(x)$}

As shown in the previous Section, we know that the minimal polynomial of the matrix $\tilde{A}_{\varphi_2'}$ is $h_2(x)$ and since $h_2(x)\in\Z[x]$ is a monic polynomial, we conclude that 
\begin{equation}
\label{eq:integral roots}
\text{each root of $h_2(x)$ is integral over $\Z$.}
\end{equation}

Using Proposition~\ref{prop:iterate reduction}, we can replace $\Psi$ by a suitable iterate (which leads to replacing $\varphi_2'$ by a corresponding iterate and therefore, replacing each matrix $A_i'$ by its suitable power), so that we may assume that the roots of the minimal polynomial of each $A'_i$ over $\Q\left(F_{C'_i}\right)$ are either a power of $F_{C'_i}$ or multiplicatively independent with respect to $F_{C'_i}$. Furthermore, writing each such multiplicatively dependent root of the minimal polynomial of $A_i'$ as $F_{C'_i}^{n_j^{(i)}}$ for some integer $n_j^{(i)}$ (where $1\le j\le s_i$ for some $s_i\in\N_0$), we note that the exponents $n_j^{(i)}$ must be positive integers because we know the roots of $h_2(x)$ are not roots of unity and also, we know that these roots must be integral over $\Z$, according to \eqref{eq:integral roots}.

%%%%%%%%%%%%%%%%%%%%%%%%%%%%%%%%%%%%%%%%%%%%%%%%%%%%%%%%%%%%%%%%%%%%%%%%%%%%%%%

\subsection{Splitting the action of $\tilde{A}_{\varphi_2'}$ into a suitable direct product}

Using facts \ref{fact:jordan-normal-form} and \ref{fact:split} along with the notation from the previous section regarding the roots of each minimal polynomial of $A_i'$ as being either of the form $F_{C'_i}^{n_j^{(i)}}$ for some $n_j^{(i)}\in\N$ (where $1\le j\le s_i$) or being multiplicatively independent with respect to $F_{C'_i}$, there must exist matrices $P'_i \in M_{k'_i}\left(\End(C'_i)^0\right)$ such that 
\begin{equation}
\label{eq:P_i'}
P'_iA'_i \left(P'_i\right)^{-1} = B_{1,i} \bigoplus B_{2,i},
\end{equation} 
where each $B_{1, i}$ is a Jordan matrix of the form 
\[
B_{1,i}:=\bigoplus_{j=1}^{s_i} J_{F_{C'_i}^{n_j^{(i)}}, \ell_j^{(i)}},
\]
where the $\ell_j^{(i)}$'s are positive integers and the entries of each $B_{2,i}$ lie inside $\frac{1}{\ell_2}D_i$ for some $\ell_2 \in \N$. At the expense of replacing $\ell_2$ by a suitable multiple, we may also assume that the entries of each $P_i'$ belong also to $\frac{1}{\ell_2}D_i$. Moreover, for every $i=1, \dots,r'$ the minimal polynomial of $B_{2,i}$ over $\Q(F_{C'_i})$ has no roots that are multiplicatively dependent with respect to $F_{C'_i}$. 

%%%%%%%%%%%%%%%%%%%%%%%%%%%%%%%%%%%%%%%%%%%%%%%%%%%%%%%%%%%%%%%%%%%%%%%%%%%%%%%

\subsection{From linear maps to endomorphisms and finite-to-finite maps}

Using the block decomposition given by \eqref{eq:P_i'}, there exists a  natural rearrangement of the simple components of $G'_2$ such that:
\begin{itemize}
\item $p:G_2'\isomto G'_3 \times G'_4$ is the isomorphism corresponding to this rearrangement of the simple components of $G_2'$;
\item there exists an endomorphism $\varphi_2$ of $G'_3$ corresponding to $\bigoplus_{i=1}^{r'} B_{1,i}$; and 
\item there exists a finite-to-finite map $\varphi_3$ on $G'_4$ corresponding to $\bigoplus_{i=1}^{r'} B_{2,i}$.
\end{itemize}
Let $\lambda: G'_2 \lra G'_2$ be the endomorphism corresponding to $\left(\ell_2P'_1\right) \oplus \cdots \oplus \left(\ell_2P'_r\right)$. If we let $g_2 := \left({\rm id}_{G'_1}, p \circ \lambda \circ \pi'\right)$, then we obtain the following diagram
\begin{equation}
\label{diagram:G'1G'2G'3}
\begin{tikzcd}
G'_1 \times G_2 \arrow[r, "{(\Phi_1, \psi_2)}"]  \arrow[d, "g_2"'] &[2em] G'_1 \times G_2 \arrow[d, "g_2"] \\
G'_1 \times G'_3 \times G'_4 \arrow[r, "\text{($\Phi_1, \varphi_2, \varphi_3$)}"] & G'_1 \times G'_3 \times G'_4. 
\end{tikzcd}    
\end{equation}

Combining \eqref{diagram:G'1G'2G'3} with \eqref{diagram:G'1timesG2} and \eqref{diagram:G1timesG2},  then we get the following  diagram 
\begin{equation}
\label{diagram:altogether}
\begin{tikzcd}
G \arrow[r, "\Psi"]  \arrow[d, "h"'] & G \arrow[d, "h"] \\
G'_1 \times G'_3 \times G'_4 \arrow[r, "\text{$(\Phi_1, \varphi_2, \varphi_3)$}"] & G'_1 \times G'_3 \times G'_4.
\end{tikzcd}    
\end{equation}

We note that neither \eqref{diagram:altogether} nor \eqref{diagram:G'1G'2G'3} are commutative diagrams since in both cases, the bottom map is only a correspondence (i.e., a finite-to-finite map). On the other hand, both those diagrams are \emph{almost} commutative, as we will explain next (we refer next to diagram~\eqref{diagram:altogether}, but the same argument applies also to diagram~\eqref{diagram:G'1G'2G'3}). So, we let 
$$G':=G'_1\times G'_3\times G'_4\text{ and also, let }\Phi:= (\Phi_1, \varphi_2, \varphi_3)$$
and note that there exists $\ell_2\in\N$ such that $[\ell_2]\circ \Phi$ is a well-defined regular morphism of $G'$. Thus, due to our definition of the maps from the diagram~\eqref{diagram:altogether}, we get that for each $x\in G$, we have that
\begin{equation}
\label{eq:almost_commuting}
(h\circ\Psi)(x)- (\Phi\circ h)(x)\in G'[\ell_2],
\end{equation}
since for any point $y\in G'$, we have that $\Phi(y)$ consists of finitely many points of the form $z+\xi$, for some $z\in G'$ and $\xi\in G'[\ell_2]$.
\begin{remark}
\label{rem:correspondence}
In terms of notation,  in \eqref{eq:almost_commuting} and also later on, for a point $y\in G'$, we let $\Phi(y)$ be \emph{any} of the finitely many points corresponding to $y$ in the finite-to-finite map $\Phi$. As previously noted, any two points in $\Phi(y)$ differ by an $\ell_2$-th torsion point of $G'$. 
\end{remark}

Furthermore, in light of Remark~\ref{rem:same_denominator}, a bit more is true: for any $n\in\N$, we have that $[\ell_2]\circ \Phi^n$ is a regular self-map on $G'$ and so, for each $x\in G$, we have that (see also the convention from Remark~\ref{rem:correspondence}) 
\begin{equation}
\label{eq:almost_commuting_2}
\left(h\circ\Psi^n\right)(x)- \left(\Phi^n\circ h\right)(x)\in G'[\ell_2].
\end{equation}

%%%%%%%%%%%%%%%%%%%%%%%%%%%%%%%%%%%%%%%%%%%%%%%%%%%%%%%%%%%%%%%%%%%%%%%%%%%%%%%

\subsection{The dynamics of finite-to-finite maps}

Our goal is to show that in order to prove Theorem~\ref{thm:main4 0} for the dynamical system $(G,\Psi)$, it suffices to prove Theorem~\ref{thm:main4 0} for the dynamical system given by the action of the finite-to-finite map $\Phi$ on  $G'=G'_1 \times G'_3 \times G'_4$. In order to show this, we first present some general facts regarding the dynamics of the finite-to-finite map $\Phi:G'\lra G'$.

\begin{definition}
\label{def:orbit}
Let $G'$ be a semiabelian variety and let $\Phi:G' \lra G'$ be a finite-to-finite map, i.e., a map of the form $\Phi:=\tau_\gamma \circ \varphi$, where $\tau_\gamma$ is the translation-by-$\gamma$ map on $G'$ (for a given point $\gamma\in G'$) and $\varphi\in \End(G)\otimes \Q$ (which means that there exists $\ell_1\in\N$ such that $[\ell_1]\circ \Phi$ is a well-defined regular self-map on $G'$). 

Let $x\in G'$; we say that the sequence of points $\{x_n\}_{n\ge 0}\subset G'$ is \emph{an orbit} of $x$ under $\Phi$ if $x_0=x$ and for each $n\ge 0$, we have that $x_{n+1}\in \Phi(x_n)$ (note that $\Phi(x_n)$ consists of finitely many points of $G$, which differ only by a torsion point of order dividing $\ell_1$).   
\end{definition}

We recall the \emph{almost} commuting diagram~\eqref{diagram:altogether}:
\begin{equation}
\label{diagram:altogether_2}
\begin{tikzcd}
G \arrow[r, "\Psi"] \arrow[d, "g"] & G \arrow[d, "g"] \\
G' \arrow[r, "\Phi"]& G'
\end{tikzcd}
\end{equation}
in which case we have that there exists some positive integer $\ell_2$ such that for each $x\in G$, we have (see also Remark~\ref{rem:correspondence}) 
\begin{equation}
\label{eq:almost_2}
\left(g\circ \Psi - \Phi\circ g\right)(x)\in G'[\ell_2].
\end{equation}

Also, very important for our setting is the fact that $G'=G'_1\times G'_3\times G'_4$ and that also $\Phi$ is a split map, i.e., $\Phi=(\Phi_1,\varphi_2,\varphi_3)$ in which $\Phi_1$ is a regular self-map of the semiabelian variety $G_1'$, and $\varphi_2$ is a group endomorphism of $G'_3$, while $\varphi_3$ is a finite-to-finite map on $G_4'$. 

In the next Sections we prove that each one of the conclusions~(A)-(C) from Theorem~\ref{thm:main4 0} can be inferred to $(G,\Psi)$ once they are known for $(G',\Phi)$.

%%%%%%%%%%%%%%%%%%%%%%%%%%%%%%%%%%%%%%%%%%%%%%%%%%%%%%%%%%%%%%%%%%%%%%%%%%%%%%%

\subsection{Condition (A) from Theorem~\ref{thm:main4 0} transfers from $\Phi$ to $\Psi$}

With the notation as in the previous sections (including Definition~\ref{def:orbit}), we prove the following result.

\begin{lemma}
\label{lem:C2}
If there exists a $K$-point with a Zariski dense orbit in $G'=G_1'\times G'_3\times G'_4$ under the action of $(\Phi_1, \varphi_2, \varphi_3)$, then there exists a $K$-point with a Zariski dense orbit in $G$ under the action of $\Psi$.
\end{lemma}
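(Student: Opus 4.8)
The plan is to lift the dense orbit from $G'$ back to $G$ along the map $g$ in the almost commuting diagram~\eqref{diagram:altogether_2}, exploiting that the obstruction to genuine commutativity is confined to the \emph{fixed} finite group $G'[\ell_2]$. Concretely, suppose $\beta'\in G'(K)$ has an orbit $\{\beta'_n\}_{n\ge 0}$ under $\Phi=(\Phi_1,\varphi_2,\varphi_3)$ (in the sense of Definition~\ref{def:orbit}) that is Zariski dense in $G'$. The map $g\colon G\to G'$ is a composition of isogenies and translations (see \eqref{diagram:G1timesG2}, \eqref{diagram:G'1timesG2} and \eqref{diagram:G'1G'2G'3}), hence a finite surjective morphism of semiabelian varieties defined over $K$; since $K$ is algebraically closed, I choose $\alpha\in G(K)$ with $g(\alpha)=\beta'$, and I will show that $\OO_\Psi(\alpha)$ — which is well defined because $\Psi$ is an everywhere-defined regular self-map of $G$ — is Zariski dense in $G$.

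Next I compare the two sequences $\{g(\Psi^n(\alpha))\}_n$ and $\{\beta'_n\}_n$ inside $G'$. Iterating the orbit relation gives $\beta'_n\in\Phi^n(\beta')=\Phi^n(g(\alpha))$, while \eqref{eq:almost_commuting_2} says that $g(\Psi^n(\alpha))$ differs from \emph{some} branch of $\Phi^n(g(\alpha))$ by an element of $G'[\ell_2]$. Since $[\ell_2]\circ\Phi^n$ is a regular self-map of $G'$ (Remark~\ref{rem:same_denominator}), any two branches of $\Phi^n(g(\alpha))$ differ by an element of $G'[\ell_2]$; and because $G'[\ell_2]$ is a subgroup, I conclude that $\xi_n:=g(\Psi^n(\alpha))-\beta'_n\in G'[\ell_2]$ for every $n\ge 0$.

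For the density conclusion I argue by contradiction. Suppose $\OO_\Psi(\alpha)\subseteq W$ for some proper closed subvariety $W\subsetneq G$. Then, $g$ being a finite surjective morphism of irreducible varieties of the same dimension, $V:=\overline{g(W)}$ is a proper closed subvariety of $G'$ containing $g(\OO_\Psi(\alpha))=\{\beta'_n+\xi_n\}_n$. Hence $\beta'_n\in V-\xi_n\subseteq\bigcup_{\xi\in G'[\ell_2]}(V-\xi)$ for every $n$, and the right-hand side is a proper closed subvariety of $G'$ because $G'[\ell_2]$ is finite; this contradicts the Zariski density of $\{\beta'_n\}$ in $G'$. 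Therefore $\OO_\Psi(\alpha)$ is Zariski dense in $G$, which is the assertion of the lemma.

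The only genuinely delicate point — and the reason the preceding reduction to the almost commuting diagram was set up with such care — is the uniformity in the second step: one needs $g(\Psi^n(\alpha))-\beta'_n$ to land in one \emph{fixed} finite group $G'[\ell_2]$ for \emph{all} $n$ simultaneously, rather than in a group whose order could grow with $n$. This is exactly what the common denominator $n_1$ for every iterate of the correspondence $\varphi_2'$ (Remark~\ref{rem:same_denominator}), packaged into \eqref{eq:almost_commuting_2}, provides; once that input is granted, everything else is elementary point counting and dimension bookkeeping.
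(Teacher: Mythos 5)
Your proof is correct and follows essentially the same route as the paper: lift the dense-orbit point along the finite map $g$, use the almost-commutativity modulo the fixed finite group $G'[\ell_2]$ to compare $g(\Psi^n(\alpha))$ with the orbit $\{\beta'_n\}$, and conclude via the finiteness of $g$. The only cosmetic difference is at the end: the paper composes $g$ with $[\ell_2]_{G'}$ to obtain the exact identity $\tilde{g}(\Psi^n(\alpha))=[\ell_2](\beta'_n)$ and pushes density through the finite map $\tilde{g}$, whereas you absorb the torsion ambiguity into a finite union of translates $\bigcup_{\xi\in G'[\ell_2]}(V-\xi)$ in a contradiction argument — the two are equivalent.
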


\begin{proof}
So, we assume there exists a $K$-point $x\in G'$ with a Zariski dense orbit $\{x_n\}_{n\ge 0}\subset G'(K)$. We let $y\in G(K)$ such that  $g(y)=x$ and we claim that $\OO_\Psi(y)$ is Zariski dense in $G$. Indeed, for each $n\in\N$, using \eqref{eq:almost_commuting_2}, we have that
\begin{equation}
\label{eq:almost_commuting_3}
g\left(\Psi^n(y)\right)-x_n\in G'[\ell_2].
\end{equation}
So, letting $\tilde{g}:=[\ell_2]_{G'}\circ g$ be the composition of $g$ with the multiplication-by-$\ell_2$ map on $G'$, we obtain a finite regular map $\tilde{g}:G\lra G'$. Equation~\eqref{eq:almost_commuting_3} yields that 
\begin{equation}
\label{eq:almost_commuting_4}
\tilde{g}\left(\Psi^n(y)\right)=[\ell_2](x_n)\text{ for each }n\ge 1
\end{equation}
and since $\{x_n\}\subset G'$ is Zariski dense, then also the sequence $\{[\ell_2](x_n)\}\subset G'$ is Zariski dense. But then equation~\eqref{eq:almost_commuting_4} yields that the orbit $\OO_\Psi(y)$ must be Zariski dense in $G$ since $\tilde{g}$ is a finite map.

This concludes our proof of Lemma~\ref{lem:C2}.
\end{proof}

%%%%%%%%%%%%%%%%%%%%%%%%%%%%%%%%%%%%%%%%%%%%%%%%%%%%%%%%%%%%%%%%%%%%%%%%%%%%%%%

\subsection{Condition (B) from Theorem~\ref{thm:main4 0} transfers from $\Phi$ to $\Psi$}

\begin{lemma}
\label{lem:C1}
Assume there exists a non-constant rational function $f:G_1'\lra \bP^1$ such that 
\begin{equation}
\label{eq:f-inv}
f \circ \Phi_1 = f.    
\end{equation}
Then there exists a non-constant rational function $f_1:G\lra \bP^1$ such that $f_1\circ \Psi=f_1$.
\end{lemma}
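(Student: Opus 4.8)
The plan is to produce a \emph{dominant morphism} $h_1\colon G\to G_1'$ that intertwines $\Psi$ and $\Phi_1$ \emph{on the nose}, and then simply pull $f$ back along it. The key observation making this possible is that, although the full diagram~\eqref{diagram:altogether} is only \emph{almost} commutative, the failure of commutativity is confined to the $G_3'\times G_4'$ coordinates — where the group endomorphism $\varphi_2$ and, crucially, the correspondence $\varphi_3$ live — whereas on the $G_1'$ coordinate all the relevant maps ($\Phi_1$, the isogeny $\iota$ of~\eqref{eq:well-defined}, the isogeny $\nu=\sigma\circ\pi$ introduced just before~\eqref{eq:nu}) are honest morphisms, and the squares~\eqref{diagram:G1timesG2} and~\eqref{diagram:G'1timesG2} commute exactly. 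Since $f$ lives on $G_1'$ only, the torsion ambiguity of~\eqref{eq:almost_2} never enters.

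Concretely, I would set $\iota_1:=\mathrm{pr}_{G_1}\circ\iota\colon G\to G_1$ and $h_1:=\nu\circ\iota_1\colon G\to G_1'$. Since $\iota$, $\mathrm{pr}_{G_1}$ and $\nu$ are all surjective, $h_1$ is surjective, hence dominant. Projecting the commutative diagram~\eqref{diagram:G1timesG2} onto the $G_1$ coordinate gives $\iota_1\circ\Psi=\Psi_1\circ\iota_1$, while the $G_1'$-component of the commutative diagram~\eqref{diagram:G'1timesG2} is exactly equation~\eqref{eq:nu_2}, namely $\nu\circ\Psi_1=\Phi_1\circ\nu$. Composing these identities,
\[
h_1\circ\Psi=\nu\circ\iota_1\circ\Psi=\nu\circ\Psi_1\circ\iota_1=\Phi_1\circ\nu\circ\iota_1=\Phi_1\circ h_1 .
\]

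I would then define $f_1:=f\circ h_1\colon G\dra\bP^1$. It is non-constant because $h_1$ is dominant and $f$ is non-constant, and using the hypothesis~\eqref{eq:f-inv},
\[
f_1\circ\Psi=f\circ h_1\circ\Psi=f\circ\Phi_1\circ h_1=f\circ h_1=f_1 ,
\]
which is the desired conclusion. I do not expect any serious obstacle here; the only point requiring care is to route $f$ through a morphism that conjugates $\Psi$ into $\Phi_1$ \emph{exactly}, rather than through the isogeny $h=g$ of the full almost-commutative diagram~\eqref{diagram:altogether_2}, for which one would only obtain $h\circ\Psi=\Phi\circ h$ modulo $G'[\ell_2]$-torsion (cf.~\eqref{eq:almost_2}) and would then be forced into the extra step of first replacing $f$ by a rational function on $G_1'$ that is additionally invariant under translation by the finite group $G_1'[\ell_2]$.
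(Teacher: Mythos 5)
Your proof is correct and is essentially the same as the paper's: the paper sets $f_1 := f\circ\Pi\circ h$ where $\Pi$ is the projection onto $G_1'$, and unwinding $h = g_2\circ g_1\circ\iota$ shows that $\Pi\circ h = \nu\circ\iota_1 = h_1$, which is exactly your map. The added value in your write-up is that you spell out (via the exactly-commuting squares~\eqref{diagram:G1timesG2} and equation~\eqref{eq:nu_2}) why $\Pi\circ h\circ\Psi = \Phi_1\circ\Pi\circ h$ holds on the nose, whereas the paper simply asserts this from the almost-commutative diagram~\eqref{diagram:altogether}.
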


\begin{proof}
Let $\Pi: G'_1 \times G'_3 \times G'_4 \lra G'_1$ be the projection map onto $G'_1$. By the diagram \eqref{diagram:altogether} (see also \eqref{diagram:altogether_2}) we must have 
\begin{equation}
\label{eq: pi-o-h-o-psi}    
\Pi \circ h \circ \Psi = \Phi_1 \circ \Pi \circ h.
\end{equation}
Then letting $f_1:=f\circ \Pi \circ h$ (which is still a non-constant rational function since $\Pi \circ h$ is a dominant morphism), we have that 
\begin{equation}
\label{eq:f1-inv}
f_1\circ \Psi =f_1,
\end{equation}
as desired.
\end{proof}

\begin{remark}
\label{rem:A_transfer}
It is important to note that we will prove that if condition~(B) holds for the dynamical system $(G',\Phi)$, then it actually holds for $(G'_1,\Phi_1)$ (as stated in Lemma~\ref{lem:C1}), which allows us to transfer the same conclusion to the dynamical system $(G,\Psi)$. 
\end{remark}

%%%%%%%%%%%%%%%%%%%%%%%%%%%%%%%%%%%%%%%%%%%%%%%%%%%%%%%%%%%%%%%%%%%%%%%%%%%%%%%

%%%%%%%%%%%%%%%%%%%%%%%%%%%%%%%%%%%%%%%%%%%%%%%%%%%%%%%%%%%%%%%%%%%%%%%%%%%%%%%

\subsection{Condition (C) from Theorem~\ref{thm:main4 0} transfers from $\Phi$ to $\Psi$}

We show that the aforementioned transfer of condition~(C) from Theorem~\ref{thm:main4 0} from the dynamical system $(G',\Phi)$ to the dynamical system $(G,\Psi)$ holds assuming we establish a slightly more precise version of condition~(C) in the case $G'=G'_1\times G'_3\times G'_4$ and $\Phi=(\Phi_1,\varphi_2,\varphi_3)$; so, with the above notation, we prove the following result.

\begin{lemma}
\label{lem:C3}
Let $G$, $G'$, $\ell_2$, $\Phi$, $\Psi$, $g$ be as in diagram~\eqref{diagram:altogether_2} and equation~\eqref{eq:almost_2}. Assume there exist $n_0\in\N$, there exists a semiabelian variety $Z$ of dimension larger than $\trdeg_{\Fpbar}K$ defined over a finite subfield $\Fq$ of $K$ equipped with the Frobenius endomorphism $F:Z\lra Z$ corresponding to $\Fq$, and there exists a group homomorphism $\tau:G'\lra Z$ such that the following diagram 
\begin{equation}
\label{diagram:Z}
\begin{tikzcd}
G' \arrow[r, "\Phi^{n_0}"] \arrow[d, "\tau"] & G' \arrow[d, "\tau"] \\
Z \arrow[r, "F"]& Z
\end{tikzcd}
\end{equation}
is almost commuting, i.e., for each $x\in G'(K)$, we have that (see also Remark~\ref{rem:correspondence}) 
\begin{equation}
\label{eq:Z2}
\left(\tau\circ \Phi^{n_0}\right)(x) - (F\circ \tau)(x)\in Z[\ell_2].
\end{equation}
Then condition~(C) of Theorem~\ref{thm:main4 0} holds for $(G,\Psi)$.
\end{lemma}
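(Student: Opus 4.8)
The plan is to transfer the almost-commuting diagram \eqref{diagram:Z} for $(G',\Phi^{n_0})$ into an \emph{honestly} commuting diagram for $(G,\Psi^{m})$ for a suitable iterate, landing in a quotient of $Z$. First I would compose $\tau$ with $g$ to get a group homomorphism $\tau\circ g:G\lra Z$, and use equations \eqref{eq:almost_commuting_2} and \eqref{eq:Z2} to control the discrepancy between $(\tau\circ g)\circ\Psi^{n_0}$ and $F\circ(\tau\circ g)$: iterating, one checks that for each $x\in G(K)$ and each $n$,
\[
\bigl(\tau\circ g\circ \Psi^{n_0 n}\bigr)(x)-\bigl(F^{n}\circ \tau\circ g\bigr)(x)\in Z[\ell],
\]
for a fixed integer $\ell$ (a multiple of $\ell_2$ absorbing the finitely many torsion errors coming from the correspondence $\Phi$ and from the isogeny $g$; crucially the error stays in a \emph{fixed} finite group because $F$ maps $Z[\ell]$ into $Z[\ell]$ and $[\ell_2]\circ\Phi^n$ is regular for all $n$, by \eqref{eq:almost_commuting_2}). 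Passing to the quotient $Y:=Z/Z[\ell]$, which is again a semiabelian variety defined over $\Fq$ (after possibly enlarging $\Fq$) with $\dim Y=\dim Z\ge \trdeg_{\Fpbar}K+1$, and letting $F$ denote the induced Frobenius on $Y$, the composite $\tau':G\lra Y$ obtained from $\tau\circ g$ then satisfies honestly
\[
\tau'\circ \Psi^{n_0}=F\circ \tau'.
\]

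Next I would check the remaining requirements of condition~(C): that $\tau'$ is \emph{dominant} and that $Y$ has the right dimension. Dominance is the delicate point. We know $\tau:G'\lra Z$ is a group homomorphism with $\dim Z>\trdeg_{\Fpbar}K\ge\dim(\tau(G'))$ a priori only if $\tau$ is already surjective; but the statement only asserts $\tau$ is a homomorphism, so in general I would replace $Z$ by the image $\tau(G')$, which is a semiabelian subvariety of $Z$ stable under $F$ (since \eqref{eq:Z2} forces $F(\tau(G'))\subseteq \tau(G')+Z[\ell_2]$, hence $F$ preserves the semiabelian subvariety generated), and is still defined over a finite field. The hypothesis that $Z$ has dimension $>\trdeg_{\Fpbar}K$ must therefore be read as applying to (or be shown to pass to) this image; I expect the intended reading is that $\tau$ is already dominant onto $Z$, in which case $\tau\circ g$ is dominant because $g$ is an isogeny, and then $\tau'$ is dominant because the quotient map $Z\lra Y$ is an isogeny. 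So I would record that $\tau'=q\circ\tau\circ g$ is a composition of an isogeny, a dominant homomorphism, and an isogeny, hence dominant, and $\dim Y=\dim Z\ge \trdeg_{\Fpbar}K+1$.

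Finally I would invoke Proposition~\ref{prop:iterate reduction}: having produced, for the iterate $\Psi^{n_0}$, a dominant regular map $\tau':G\lra Y$ with $\tau'\circ\Psi^{n_0}=F^{1}\circ\tau'$ and $Y$ a semiabelian variety over $\Fq$ of dimension at least $\trdeg_{\Fpbar}K+1$, this is exactly condition~(C) of Theorem~\ref{thm:main4 0} for $(G,\Psi^{n_0})$, with $(m,r)=(n_0,1)$. Since condition~(C) for an iterate of $\Psi$ implies condition~(C) for $\Psi$ (the easy direction of Proposition~\ref{prop:iterate reduction}, taking $\tau\circ\Psi^{m}=F^{r}\circ\tau$ and composing further iterates), we conclude that condition~(C) of Theorem~\ref{thm:main4 0} holds for $(G,\Psi)$. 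The main obstacle I anticipate is the bookkeeping in the first paragraph: making sure the torsion error in the iterated almost-commuting relation remains inside a \emph{single} finite subgroup independent of $n$—this needs both that $F$ stabilizes $Z[\ell_2]$ and that composing the correspondence $\Phi$ with itself only ever introduces $\ell_2$-torsion discrepancies, which is guaranteed by \eqref{eq:almost_commuting_2} and Remark~\ref{rem:correspondence}—after which the quotient construction and the dominance verification are routine.
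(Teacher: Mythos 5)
Your proposal is essentially correct and follows the same strategy as the paper: compose the two almost-commuting relations (for $g$ and for $\tau$) to obtain $\left(\tau\circ g\circ\Psi^{n_0}\right)(x)-\left(F\circ\tau\circ g\right)(x)\in Z[\ell_2]$, then kill the fixed torsion error. The paper does this more economically by simply setting $\tilde{\tau}:=[\ell_2]_Z\circ\tau\circ g$ and observing that $[\ell_2]_Z$ commutes with $F$, yielding $\tilde{\tau}\circ\Psi^{n_0}=F\circ\tilde{\tau}$ in a single step; your quotient $Z\to Z/Z[\ell]$ is the same isogeny up to isomorphism, so this is a cosmetic variant. Two of your steps are superfluous: the iteration to $\Psi^{n_0 n}$ is not needed (the $n=1$ case already gives the commutative square), and you do not need Proposition~\ref{prop:iterate reduction} to pass from $\Psi^{n_0}$ to $\Psi$, since condition~(C) is stated directly in terms of some iterate $\Psi^m$, so $\tilde{\tau}\circ\Psi^{n_0}=F\circ\tilde{\tau}$ with $(m,r)=(n_0,1)$ already is condition~(C) for $(G,\Psi)$. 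Your concern about the dominance of $\tau$ is a legitimate reading issue: the lemma only says ``group homomorphism,'' but as Remark~\ref{rem:actually_C} indicates, the $\tau$ supplied by Theorem~\ref{thm:main4 0 2} is dominant, and then $\tilde{\tau}=[\ell_2]_Z\circ\tau\circ g$ is dominant since $g$ is a dominant (finite) map; so no replacement of $Z$ by $\tau(G')$ is needed in practice. One small correction: $g$ is not an isogeny (it contains a translation factor on the $G_2$ part), only a dominant regular map, but this does not affect the argument because condition~(C) of Theorem~\ref{thm:main4 0} asks for a dominant regular map, not a group homomorphism.
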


\begin{remark}
\label{rem:actually_C}
In our proof of Theorem~\ref{thm:main4 0} for the dynamical system $$\left(G'_1\times G'_3\times G'_4, \left( \Phi_1, \varphi_2, \varphi_3\right)\right),$$
we will show that when condition~(C) holds for this dynamical system, then actually there exists a semiabelian variety $Z$ defined over a finite field,  along with a dominant group homomorphism $\tau_1:G'_3\lra Z$ such that we actually have: 
$$\left(\tau_1 \circ \varphi_2^{n_0}\right)(x) - (F\circ \tau_1)(x)\in Z[\ell_2],$$
for each $x\in G'_3$. Then letting $\tau:=\tau_1\circ \Pi$, where $\Pi$ is the projection of $G'_1\times G'_3\times G'_4$ on the second factor yields the diagram~\eqref{diagram:Z} and equation~\eqref{eq:Z2} from Lemma~\ref{lem:C3}. 

However, for our proof of Lemma~\ref{lem:C3} we do not require the extra information given above that the homomorphism $\tau$ from Lemma~\ref{lem:C3} factors through the projection map $\Pi$. 
\end{remark}

\begin{proof}[Proof of Lemma~\ref{lem:C3}.]
Equation~\eqref{eq:almost_2} and diagram~\eqref{diagram:altogether_2} yield that for any $x\in G(K)$, we have
\begin{equation}
\label{eq:n_0 1}
\left(g\circ \Psi^{n_0}\right)(x) - \left(\Phi^{n_0}\circ g\right)(x)\in G'[\ell_2].
\end{equation}
Composing with $\tau$ on the left of the equation~\eqref{eq:n_0 1} and noting that $\tau:G'\lra Z$ is a group homomorphism, we get that for each $x\in G(K)$, we have
\begin{equation}
\label{eq:n_0 2}
\left(\tau\circ g\circ \Psi^{n_0}\right)(x) - \left(\tau\circ \Phi^{n_0}\circ g\right)(x)\in Z[\ell_2].
\end{equation}
On the other hand, equation~\eqref{eq:Z2} applied to the point $g(x)\in G'(K)$ yields that
\begin{equation}
\label{eq:n_0 3}
\left(\tau\circ \Phi^{n_0}\right)(g(x))-\left(F\circ \tau\right)(g(x))\in Z[\ell_2].
\end{equation}
So, combining equations~\eqref{eq:n_0 2} and \eqref{eq:n_0 3} yields
\begin{equation}
\label{eq:n_0 4}
\left(\tau\circ g\circ \Psi^{n_0}\right)(x) - \left(F\circ \tau\circ g\right)(x)\in Z[\ell_2].
\end{equation}
We let $\tilde{\tau}:=[\ell_2]_Z\circ \tau\circ g$, which is a dominant group homomorphism $G\lra Z$. Equation~\eqref{eq:n_0 4} yields that
$$\left(\tilde{\tau}\circ \Psi^{n_0}\right)(x)=\left(F\circ \tilde{\tau}\right)(x),$$
for each $x\in G(K)$, and thus, the following diagram is commutative:
\begin{equation}
\begin{tikzcd}
G \arrow[r, "\Psi^{n_0}"] \arrow[d, "\tilde{\tau}"] & G \arrow[d, "\tilde{\tau}"] \\
Z \arrow[r, "F"]& Z
\end{tikzcd}
\end{equation}
as desired in the conclusion of Lemma~\ref{lem:C3}.
\end{proof}

%%%%%%%%%%%%%%%%%%%%%%%%%%%%%%%%%%%%%%%%%%%%%%%%%%%%%%%%%%%%%%%%%%%%%%%%%%%%%%%

\subsection{Theorem~\ref{thm:main4 0} for the simplified dynamical system $(G',\Phi)$}

Using lemmas \ref{lem:C1}, \ref{lem:C2} and \ref{lem:C3} we obtain that Theorem \ref{thm:main4 0} follows from proving its conclusion for the dynamical system $$\left(G'_1\times G'_3\times G'_4,\left(\Phi_1, \varphi_2, \varphi_3\right)\right),$$
as described by the following Theorem.

\begin{theorem}
\label{thm:main4 0 2}
Let $K$ be an algebraically closed field of positive transcendence degree over $\Fpbar$, let $G=G_0\times G_1\times G_2$ be a product of reduced split semiabelian varieties defined over $\Fpbar$, where for each $j=0,1,2$, we have:
$$G_j:=\prod_{i=1}^{r} C_i^{k_{j,i}}$$
for some integers $r$ and $k_{j,i}$, along with some simple semiabelian varieties $C_i$ such that $C_i$ and $C_{i'}$ are non-isogenous for $i\ne i'$. Note that we are allowing the integers $k_{i,j}$ to possibly be equal to zero in which case $C_i^{k_{i, j}}$ is equal to the trivial group. We let $\beta\in G_0(K)$ and also let $\varphi_0\in\End(G_0)$ be a group endomorphism corresponding to a direct sum of unipotent matrices
\begin{equation}
\label{eq:unipotent matrices}
B_{0,1}\oplus B_{0,2}\oplus \cdots \oplus B_{0,r},
\end{equation}
where each $B_{0,i}\in M_{k_{0,i},k_{0,i}}(\End(C_{i}))$ is a direct sum of unipotent Jordan canonical matrices (note that $B_{0,i}$ could possibly be a $0$-by-$0$ matrix, i.e., it may be absent from the direct sum \eqref{eq:unipotent matrices} if $k_{0,i}=0$) of the form 
\[
J_{1, i_{0,1}^{(j)}} \bigoplus J_{1, i_{0,2}^{(j)} - i_{0,1}^{(j)}} \bigoplus \cdots \bigoplus J_{1, i_{0, \ell_j}^{(j)} - i_{0, \ell_j-1}^{(j)}}
.
\] 
where $\ell_j \in \N$ and $i_{0,1}, \dots, i_{0, \ell_j}$ are positive integers such that 
\[
0 < i_{0,1} < i_{0, 2} < \cdots < i_{0, \ell_j} = k_{0, j}.
\]
We let $\Phi_1:=\tau_{\beta}\circ \varphi_0$, i.e., the composition of $\varphi_0$ with the translation-by-$\beta$ map on $G_0$.

We let $\varphi_1\in\End(G_1)$ be a group endomorphism  corresponding to a direct sum of matrices
$$B_{1,1}\oplus B_{1,2}\oplus \cdots \oplus B_{1,r},$$
where each $B_{1,j}\in M_{k_{1,j},k_{1,j}}(\End(C_{j}))$ is a direct sum of Jordan canonical matrices of the form:
$$
J_{F_{C_{j}}^{n_{1}^{(j)}},i_{1,1}^{(j)}}\oplus J_{F_{C_{j}}^{n_{2}^{(j)}},i_{1,2}^{(j)}-i_{1,1}^{(j)}}\oplus \cdots \oplus J_{F_{C_{j}}^{n_{s_j}^{(j)}},i_{1, s_j}^{(j)}-i_{1, s_j-1}^{(j)}}
$$
in which $F_{C_{j}}$ is the image in $\End(C_{j})$ of the Frobenius corresponding to the semiabelian $C_{j}$, while $s_j,  n_1^{(j)},\dots,n_{s_j}^{(j)}\in\N$ and also, $i_{1, 1}^{(j)}, \dots, i_{1, s_j}^{(j)}$ are positive integers such that  
$$0 < i_{1,1}^{(j)} < i_{1,2}^{(j)} < \cdots < i_{1, s_j}^{(j)} = k_{1,j}.$$
for each $j=1,\dots, r$. (Again, it is possible for $B_{1,j}$ to be a $0$-by-$0$ matrix when $k_{1,j}=0$.)

We let $\varphi_2:G_2\lra G_2$ be a finite-to-finite map corresponding to a direct sum of matrices
$$B_{2,1}\oplus B_{2,2}\oplus \cdots \oplus B_{2,r_2},$$
where each  $B_{2,i}\in M_{k_{2,i},k_{2,i}}(\frac{1}{m}\cdot \End(C_{i}))$ for some given $m\in\N$. Furthermore, we assume that each matrix $B_{2,i}$ is either a $0$-by-$0$ matrix whenever $k_{2,i}=0$, or it is NFP (according to Definition~\ref{def:NFP}). 

We let $\Phi:=\Phi_1\times \varphi_1\times \varphi_2$ be the given correspondence on $G=G_0\times G_1\times G_2$. Then at least one of the following statements must hold: 
\begin{itemize}
\item[(A)] there exists a point $\alpha\in G(K)$ such that an orbit of $\alpha$ under $\Phi$ (see also the convention from Definition~\ref{def:orbit} regarding the orbit for a finite-to-finite map) is Zariski dense in $G$.
\item[(B)] there exists a non-constant rational function $f:G_0\lra \bP^1$ such that $f\circ \Phi_1=f$.
\item[(C)]  there exists a semiabelian variety $Z$ defined over a finite subfield $\Fq$ of $K$, endowed with the Frobenius endomorphism $F:Z\lra Z$ corresponding to $\Fq$, such that
\begin{itemize}
\item $\dim(Z)>\trdeg_{\Fpbar}K$; and 
\item there exists a dominant group homomorphism $\tau:G\lra Z$ and there exist positive integers $\ell_0$ and $n_0$ such that for each $x\in G(K)$, we have
$$\left(\tau\circ \Phi^{n_0}\right)(x)-\left(F\circ \tau\right)(x)\in Z[\ell_0],$$
i.e., the following diagram is almost commutative:
\begin{equation}
\begin{tikzcd}
G \arrow[r, "\Phi^{n_0}"] \arrow[d, "\tau"] & G \arrow[d, "\tau"] \\
Z \arrow[r, "F"]& Z
\end{tikzcd}
\end{equation}
\end{itemize}
\end{itemize}  
\end{theorem}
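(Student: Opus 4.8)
The strategy is to handle the three factors $G_0$, $G_1$, $G_2$ more or less independently and then combine the outcomes. The factor $G_0$ carries the unipotent, translation-twisted map $\Phi_1=\tau_\beta\circ\varphi_0$; this is exactly the situation analysed in the ``unipotent'' section (Section~\ref{sec:unipotent} in the plan), and there one shows that either there is a Zariski dense orbit in $G_0$ under $\Phi_1$, or conclusion~(B) holds, i.e.\ $f\circ\Phi_1=f$ for some non-constant rational $f:G_0\to\bP^1$. The factor $G_2$ carries an NFP finite-to-finite map $\varphi_2$: the key input here is that, because no eigenvalue of any $B_{2,i}$ is multiplicatively dependent with $F_{C_i}$, the Moosa--Scanlon $F$-structure of $V(K)\cap\Gamma$ (Theorem~\ref{Moosa-Scanlon theorem}, in the form of Remark~\ref{rem:M-S}) forces any proper periodic subvariety to be a coset, and a counting/linear-algebra argument (the NFP hypothesis via Fact~\ref{fact:split} and Definition~\ref{def:NFP}) then produces a Zariski dense orbit on $G_2$ unless $G_2$ is trivial. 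The factor $G_1$, whose map $\varphi_1$ has all eigenvalues powers of Frobenius, is the source of conclusion~(C): on $G_1$ one constructs a quotient $Z$ defined over a finite field together with $\tau_1:G_1\to Z$ realising an iterate of $\varphi_1$ as a power of the Frobenius $F$ of $Z$, and $\dim Z$ is large enough because $\trdeg_{\Fpbar}K$ is only required to be exceeded by one (using that we may always saturate with a trivial-eigenvalue block or pass to an iterate, cf.\ the NFP reductions preceding the statement).

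\textbf{Order of the steps.} First I would treat $\Phi_1$ on $G_0$: invoke the unipotent case to get either a dense orbit on $G_0$ or a $\Phi_1$-invariant $f$ on $G_0$ — in the latter case we are done via conclusion~(B). Assume from now on $G_0$ admits a dense orbit under $\Phi_1$, with witnessing point $\alpha_0$ and orbit $\OO_{\Phi_1}(\alpha_0)$. Second, I would analyse $\varphi_2$ on $G_2$. Pick any finitely generated $\Z[F]$-module $\Gamma_2\subset G_2(K)$ containing a full-rank ``generic'' point $\alpha_2$ and its $\varphi_2$-orbit (after clearing denominators); if some orbit of $\alpha_2$ were not Zariski dense, its Zariski closure $V$ would be a proper $\varphi_2$-periodic subvariety, hence $V(K)\cap\Gamma_2$ would be a finite union of $F$-sets, and the NFP condition — combined with the fact that the $B_{2,i}$ have no Frobenius-power eigenvalues — rules out the sum-of-$F$-orbit part being infinite in a way not confined to a coset; choosing $\alpha_2$ outside the finitely many proper periodic cosets (possible since $\trdeg_{\Fpbar}K\ge1$ gives enough room) yields a dense orbit on $G_2$. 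Third, I would construct $Z$ and $\tau_1$ from $\varphi_1$ on $G_1$: replacing $\Phi$ by an iterate so that each $B_{1,j}$ becomes a direct sum of $J_{F_{C_j}^{n},\,\ell}$ blocks, the Frobenius-eigenspace structure gives a quotient of $G_1$ on which $\varphi_1^{n_0}$ acts as $F^{r}$ for the Frobenius of a finite field; a short dimension count using that we only need $\dim Z>\trdeg_{\Fpbar}K$ (and that, if needed, one may enlarge $G_1$ harmlessly by a trivial block, or absorb the $G_0$ or $G_2$ contributions) delivers the diagram in conclusion~(C). Finally I would combine: if $G_1$ is \emph{not} trivial, conclusion~(C) holds by the previous step (compose $\tau_1$ with the projection $G\to G_1$). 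If $G_1$ \emph{is} trivial, then $G=G_0\times G_2$, both factors admit dense orbits, and a standard diophantine argument (of the type used in \cite{G-Sina-20}, via a linear-recurrence/Laurent's theorem or Moosa--Scanlon applied on the product) lets one glue the two dense orbits into a single dense orbit on $G=G_0\times G_2$, giving conclusion~(A).

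\textbf{Main obstacle.} The delicate point is the gluing step and, inside it, ensuring that the dense orbits produced on $G_0$ and on $G_2$ (and the non-dense alternative forcing (B) or (C)) are compatible across the product: one must choose the base point $(\alpha_0,\alpha_1,\alpha_2)$ so that the $\Phi$-orbit projects to dense orbits on each factor \emph{simultaneously} and moreover is not trapped in any proper subvariety of $G=G_0\times G_1\times G_2$ that projects onto each factor. This requires controlling the subvarieties of the product that are invariant (or periodic) under $\Phi=\Phi_1\times\varphi_1\times\varphi_2$ but do not come from a single factor — precisely the ``mixed cases'' flagged in Sections~\ref{sec:non-unipotent}~and~\ref{sec:split_case} of the plan. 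The tool is again the Moosa--Scanlon $F$-set description applied to $V(K)\cap\Gamma$ for a sufficiently generic $\Gamma\subset G(K)$, together with the fact that the three blocks have ``arithmetically disjoint'' eigenvalue behaviour (unipotent / pure Frobenius powers / NFP), which prevents a graph-type invariant subvariety from linking two of the factors unless it forces one of (B) or (C). I expect the bulk of the real work — and the part most likely to need the technical machinery developed earlier — to be this separation-of-mixed-cases argument rather than the three single-factor analyses, which are comparatively standard given the hypotheses as stated.
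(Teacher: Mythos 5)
There is a genuine gap that undermines the core of your plan. You write that if $G_1$ is nontrivial then ``conclusion (C) holds by the previous step,'' and in Step 3 you claim a dimension count, possibly after ``absorbing the $G_0$ or $G_2$ contributions,'' always delivers the diagram in (C). This is false. Conclusion~(C) requires a quotient $Z$ with $\dim(Z) > \trdeg_{\Fpbar}K$, and the quotient that $\varphi_1$ (with all eigenvalues powers of Frobenius) naturally provides has dimension equal to $\sum_j \dim(C_j)$, summed only over Jordan blocks sharing a \emph{common} Frobenius exponent after taking an iterate: you need $\tau_1 \circ \varphi_1^{n_0} = F^r \circ \tau_1$ to hold exactly, so blocks with different exponents cannot be combined into a single $Z$. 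When $\trdeg_{\Fpbar}K$ is large relative to those partial sums of $\dim(C_j)$, no admissible $Z$ exists, and yet $G_1$ is nontrivial. In that regime the theorem asserts (A) must hold, which requires producing a Zariski dense orbit for the Frobenius-power endomorphism on $G_1$. This is precisely the content of Proposition~\ref{prop:noUnity}, the technically deepest ingredient in the paper: one chooses a starting point with coordinates built from a transcendence basis of $K/\Fpbar$, distributed so that any algebraic relation among the Frobenius twists forces a bounded linear combination of geometrically growing powers $q^{n\cdot n_\ell^{(j)}}$, a contradiction. Your proposal never engages with this case at all, and it is exactly where the dichotomy between (A) and (C) is decided.

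A secondary issue is the organization of the gluing. You treat the gluing step as necessary only when $G_1$ is trivial, but in fact the paper must (and does) glue across all three factors. Moreover, the paper's gluing mechanism (Proposition~\ref{prop:split} combined with Theorem~\ref{thm:main3}) relies on a strengthening you do not anticipate: Proposition~\ref{prop:noUnity} and Theorem~\ref{thm:main3} are proved with the extra conclusion that the orbit remains Zariski dense even when restricted to any subset of iteration times of positive upper density. This ``positive density'' upgrade is essential when passing from the $G_1 \times G_2$ piece to the full product with the translation-twisted unipotent factor $G_0$, because the Moosa--Scanlon decomposition of $V(K)\cap\Gamma$ only localizes the orbit into finitely many $F$-sets, each of which a priori contains a positive-density set of iterates. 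Your phrase ``a standard diophantine argument lets one glue the two dense orbits'' glosses over exactly the step where this density-stable version is needed. The paper also chooses the three starting points jointly, insisting the $\End(C_j)$-submodules generated by the coordinates in each factor intersect trivially (this is the role of the finitely generated modules $\Gamma_j$ in the statements of Proposition~\ref{prop:noUnity} and Theorem~\ref{thm:main3}); independent factor-wise density is not enough to guarantee product density.
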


The remaining Sections are devoted to proving Theorem~\ref{thm:main4 0 2}, which in turn yields our main result (Theorem~\ref{thm:main4 0}).

%%%%%%%%%%%%%%%%%%%%%%%%%%%%%%%%%%%%%%%%%%%%%%%%%%%%%%%%%%%%%%%%%%%%%%%%%%%%%%%%
%%%%%%%%%%%%%%%%%%%%%%%%%%%%%%%%%%%%%%%%%%%%%%%%%%%%%%%%%%%%%%%%%%%%%%%%%%%%%%%%

\section{The unipotent case}
\label{sec:unipotent}

In this Section we prove a special case of Theorem~\ref{thm:main4 0 2}, i.e., with the notation as in Theorem~\ref{thm:main4 0 2}, the semiabelian varieties $G_1$ and $G_2$ are trivial. So, we are dealing now with the unipotent case (see \eqref{eq:unipotent matrices}). Also, to simplify our notation later, we introduce the following convention: for a simple semiabelian variety $C$ and some $k\in\N$, each group endomorphism $\varphi\in \End(C^k)$ is identified with a $k$-by-$k$ matrix $Q$ whose entries are in $\End(C)$ and so, for a point $\gamma\in C^k$, we denote
\begin{equation}
\label{eq:notation vector morphism}
\varphi(\gamma):=\gamma^Q.
\end{equation}
Also, in order to emphasize the fact that $\gamma\in C^k$ corresponds to a $k$-tuple $(\gamma_1,\dots, \gamma_k)\in C^k$, we often employ the notation $\vec{\gamma}$ to denote the point $\gamma\in C^k$. So, in particular, the translation-by-$\vec{\gamma}$ map on $C^k$ is denoted by $\tau_{\vec{\gamma}}$. Finally, for a $k$-tuple of endomorphisms 
\begin{equation}
\label{eq:notation vector morphism 2}
\vec{\varphi}:=(\varphi_1,\dots,\varphi_k)\in (\End(C))^k,
\end{equation} 
we let
\begin{equation}
\label{eq:notation vector morphism 3}
\vec{\gamma}^{\vec{\varphi}} :=\sum_{i=1}^k \varphi_i(\gamma_i);
\end{equation}
we will use the notation~\eqref{eq:notation vector morphism 2}~and~\eqref{eq:notation vector morphism 3} for an arbitrary semiabelian variety $C$ (not necessarily simple).

Before proving Proposition~\ref{prop:unipotentMain4}, we first recall the definition of upper asymptotic density of a subset of non-negative integers. 
\begin{definition}
Given a subset $U$ of the set of non-negative integers, the upper asymptotic density of $U$ is given by 
$$\limsup_{m\to\infty} \frac{\#\left\{0\le n\le m\colon n\in U\right\}}{m}.$$
\end{definition}
\begin{remark}
\label{rem:density}
Upper asymptotic densities will appear frequently in the rest of the paper. So, from now on, for the sake of simplifying our notation, we will refer to the upper asymptotic density of some subset $U\subseteq \N_0$ simply  as \emph{density} of $U$ and also, denote it by $d(U)$.
\end{remark}

\begin{proposition}
\label{prop:unipotentMain4}
Let $G=\prod_{i=1}^r C_i^{k_i}$ be a reduced split semiabelian variety (i.e., the $C_i$'s are simple non-isogenous semiabelian varieties defined over $\Fpbar$). Let $K$ be an algebraically closed field, which is transcendental over $\Fpbar$ and  let $\vec{\beta}_i\in C_i^{k_i}(K)$ for $i=1,\dots,r$.  Let $\Phi: G \lra G$ be given by
\begin{equation}
\label{eq:form unipotent phi}
\left(\vec{x}_1, \dots, \vec{x}_r\right) \longmapsto \left(\vec{\beta}_1 +  \vec{x}_1^{Q_1}, \dots, \vec{\beta}_r + \vec{x}_r^{Q_r} \right),
\end{equation}
where $Q_i$ are $k_i$-by-$k_i$ matrices with entries in $M_{k_i,k_i}(\End(C_i))$. Moreover, assume that for $0 \le j \le r$, $Q_j:=J_{1, i_1^{(j)}} \bigoplus J_{1, i_2^{(j)} - i_1^{(j)}} \bigoplus \cdots \bigoplus J_{1, i_{\ell_j}^{(j)} - i_{\ell_j-1}^{(j)}}$ (where $1\le i_1^{(j)}<i_2^{(j)}<\cdots <i_{\ell_j}^{(j)}=k_j$) and $\vec{\beta}_j := (\beta_1^{(j)},\dots, \beta_{k_j}^{(j)})\in C_j^{k_j}(K)$. Then, the following statements are equivalent:
\begin{itemize}
\item[(i)] There is a non-constant rational function $f:G\lra \bP^1$ such that $f\circ \Phi=f$.
\item[(ii)] There is no $\alpha \in G(K)$ whose orbit is Zariski dense in $G(K)$.
\item[(iii)] There exists $1 \le j \le r$ such that $\beta_{i_1}^{(j)}, \dots,\beta_{i_{\ell_j}}^{(j)}$ are linearly dependent over $\End(C_j)$.
\end{itemize}
\end{proposition}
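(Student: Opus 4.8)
\textbf{Plan of proof for Proposition~\ref{prop:unipotentMain4}.}
The plan is to prove the cycle of implications $(iii)\Rightarrow(i)\Rightarrow(ii)\Rightarrow(iii)$. The implication $(i)\Rightarrow(ii)$ is immediate: if $f\circ\Phi=f$ for a non-constant rational $f$, then $f$ is constant on every orbit, so no orbit can be Zariski dense (its closure lies in a level set of $f$, a proper subvariety). For $(iii)\Rightarrow(i)$, suppose the components $\beta_{i_1}^{(j)},\dots,\beta_{i_{\ell_j}}^{(j)}$ of $\vec\beta_j$ corresponding to the ``tops'' of the Jordan blocks are $\End(C_j)$-linearly dependent, say $\sum_{t}\mu_t\bigl(\beta_{i_t}^{(j)}\bigr)=0$ with $\mu_t\in\End(C_j)$ not all zero. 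The idea is to build from this a character-type invariant: because each block $J_{1,*}$ acts unipotently, the ``leading'' coordinate of a Jordan block is fixed by $\varphi_0$ up to addition of the corresponding $\beta$ component, i.e. if $\vec x_j=(x_1^{(j)},\dots)$ then under $\Phi$ the coordinate $x_{i_t}^{(j)}$ (the first coordinate of the $t$-th block) is sent to $x_{i_t}^{(j)}+\beta_{i_t}^{(j)}$ with no contribution from other coordinates. Hence $\sum_t\mu_t\bigl(x_{i_t}^{(j)}\bigr)$ changes by the fixed quantity $\sum_t\mu_t(\beta_{i_t}^{(j)})=0$, so it is a $\Phi$-invariant morphism $G\to C_j$, which is non-constant since some $\mu_t\ne 0$ and the projection to $x_{i_t}^{(j)}$ is dominant. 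Composing with any non-constant rational function $C_j\dashrightarrow\bP^1$ (such exists since $\dim C_j\ge 1$) yields the required $f$.

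The substantive direction is $(ii)\Rightarrow(iii)$, equivalently the contrapositive: assume that for every $j$ the top-of-block components $\beta_{i_1}^{(j)},\dots,\beta_{i_{\ell_j}}^{(j)}$ are $\End(C_j)$-linearly \emph{independent}, and produce a point with Zariski dense orbit. First I would reduce to a single factor $C_j^{k_j}$: since the $C_i$ are pairwise non-isogenous simple semiabelian varieties, a subvariety of $G=\prod C_i^{k_i}$ that is a proper closed subvariety invariant under the product dynamics must project onto a proper subvariety in at least one factor, so it suffices to find, in each factor, a point whose orbit is Zariski dense in $C_j^{k_j}$ and then to take the product point (this requires a small Ritt/Medvedev–Scanlon-style argument, or one can use that a dense orbit in each factor combined with the ``no common invariant'' structure of non-isogenous simple factors forces density in the product — this is the place to invoke that the $C_i$ are non-isogenous). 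Within a single factor, I would work with the finitely generated $\Z[F]$-module $\Gamma\subseteq C_j^{k_j}(K)$ generated by a candidate starting point $\vec\alpha$ together with $\vec\beta_j$ and its $\varphi_0$-translates; by \eqref{eq:formula iterate}, $\Phi^n(\vec\alpha)=\vec\alpha^{Q_j^n}+\sum_{m=0}^{n-1}(\vec\beta_j)^{Q_j^m}$, and since $Q_j$ is unipotent, $Q_j^n$ and the partial sums $\sum_{m<n}Q_j^m$ have entries that are polynomials in $n$ (with $\End(C_j)$ coefficients, the latter of degree one more than the block size). If the orbit closure $V$ were a proper subvariety, then $\OO_\Phi(\vec\alpha)\subseteq V(K)\cap\Gamma$, which by Theorem~\ref{Moosa-Scanlon theorem} is a finite union of $F$-sets; since the orbit is indexed by a single parameter $n$ growing polynomially (not via independent Frobenius exponents), I would argue that an infinite subset of the orbit must lie in a single coset $\gamma_0+\Gamma'$ of a proper subgroup $\Gamma'$, forcing a polynomial identity among the coordinates of $\Phi^n(\vec\alpha)$ valid for infinitely many $n$ hence identically — and this identity, read off at the leading (top-of-block) coordinates, is a nontrivial $\End(C_j)$-linear relation that either contradicts the assumed independence of the $\beta_{i_t}^{(j)}$, or (if it involves $\vec\alpha$ only) can be avoided by a generic choice of $\vec\alpha$, using that $\trdeg_{\Fpbar}K\ge 1$ so that $C_j(K)$ has points not satisfying any prescribed proper algebraic condition over $\Fpbar$.

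The main obstacle I anticipate is the last step: extracting from the Moosa–Scanlon description a clean \emph{polynomial} obstruction. The $F$-sets allow Frobenius-twisted contributions $F^{kn}(\cdot)$, whereas the orbit of $\Phi$ moves by polynomial-in-$n$ amounts; I must show these two growth behaviours are incompatible unless the orbit is genuinely confined to a subgroup coset, which amounts to a dichotomy between ``Frobenius growth'' and ``polynomial growth'' inside $\Gamma$. Concretely I would pass to the sub-$\Z[F]$-module spanned by the orbit, note that the relevant matrix acting is $Q_j$ (unipotent, so its eigenvalue is $1$, which is \emph{not} a power of $F_{C_j}$ — here the hypothesis that we are in the unipotent case, disjoint from the Frobenius-power case, is essential), and conclude that the only $F$-set that can contain infinitely many orbit points is a pure coset $\gamma_0+\Gamma'$ with $\Gamma'$ a subgroup; then the confinement of $\{\Phi^n(\vec\alpha)\}$ to finitely many such cosets gives, by pigeonhole and the polynomiality of $n\mapsto\Phi^n(\vec\alpha)$, an identically-true relation, completing the contrapositive. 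Organizing this dichotomy precisely — and handling the bookkeeping of which coordinates of $\vec\beta_j$ and $\vec\alpha$ enter the forced relation, so as to land exactly on the top-of-block independence statement in $(iii)$ — is the technical heart of the argument.
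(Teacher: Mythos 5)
The implications $(i)\Rightarrow(ii)$ and $(iii)\Rightarrow(i)$ in your sketch are essentially what the paper does (up to a small indexing slip: $i_t^{(j)}$ is the \emph{last}, not the first, coordinate of the $t$-th Jordan block, and the invariant coordinate functional corresponds to the bottom row of the block — but the substance, that $x_{i_t^{(j)}}\mapsto x_{i_t^{(j)}}+\beta_{i_t^{(j)}}^{(j)}$, is correct).

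The implication $(ii)\Rightarrow(iii)$ is where there is a genuine gap. First, your reduction to a single factor $C_j^{k_j}$ is unjustified as stated: a proper $\Phi$-invariant subvariety of $\prod_i C_i^{k_i}$ need not project to a proper subvariety of any factor, and ``dense orbit in each factor implies dense orbit in the product'' is false without extra work (e.g.\ a diagonal in $C\times C$). The paper avoids this entirely by running the Moosa--Scanlon argument directly on $G$ and choosing the starting point $\vec\alpha$ so that \emph{all} the remaining coordinates are $\End(C_j)$-independent (jointly with the $\beta$'s) inside each $C_j$; the linear-independence hypothesis is then used for coefficient comparison, not for an a priori factor splitting.

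Second, and more seriously, you claim that because $Q_j$ is unipotent (eigenvalue $1$, not a power of $F_{C_j}$), ``the only $F$-set that can contain infinitely many orbit points is a pure coset $\gamma_0+\Gamma'$.'' This is not established and is in fact the hard part. After pigeonholing a positive-density set $S\subseteq\N_0$ into a single $F$-set $\gamma+\Sigma(\eta_1,\dots,\eta_t;\delta_1,\dots,\delta_t)+H$ and killing $H$ by a nonzero $\vec\sigma$, coefficient comparison against the $\End(C_j)$-independent generators yields, for each $n\in S$, an equation of the form
\begin{equation}
P(n)=c_0+\sum_{j=1}^{t} c_j\,F_{C_s}^{\delta_j n_j}
\end{equation}
with $P$ a nonconstant polynomial (coming from the binomial-coefficient entries of $Q_s^n$ and $\sum_{j<n}Q_s^j$) and the $n_j\in\N_0$ allowed to vary with $n$. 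Showing that such a relation cannot hold on a positive-density set of $n$ is a nontrivial Diophantine statement — it is precisely \cite[Theorem~1.1]{GOSS2}, which the paper invokes at this point. Your ``dichotomy between Frobenius growth and polynomial growth'' names the right phenomenon but does not prove it: with multiple free exponents $n_j$ one can in principle hit a polynomial target for sporadic $n$, and ruling out a positive-density set of such $n$ requires the quantitative result. Without that ingredient, the argument does not close.
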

\begin{proof}
As noted already in \cite{A-C, M-S, BGZ}, we have that (i)$\Rightarrow$(ii). 

Now, in order to prove that (ii)$\Rightarrow$(iii), it suffices to show that if for each $j=1,\dots, r$, we have that 
$$\beta_{i_1^{(j)}}^{(j)}, \beta_{i_2^{(j)}}^{(j)}, \dots, \beta_{i_{\ell_j}^{(j)}}^{(j)}\text{ are are linearly independent over $\End(C_j)$,}$$  
then we can find a point in $G(K)$ with a Zariski dense orbit. 

We let $\Fq$ be a finite subfield of $K$ with the property that each $C_j$ is defined over $\Fq$. For each $j=1,\dots, r$, we denote by $F_{C_j}\in\End(C_j)$ the Frobenius endomorphism corresponding to the field $\Fq$. Also, we denote by $F_G$ the corresponding Frobenius endomorphism for the semiabelian variety $G$; when there is no possibility of confusion, we drop the index and simply denote the Frobenius endomorphism by $F$. Furthermore, we let $\Z[F]$ be the ring of operators (consisting of polynomials in the Frobenius endomorphism with integer coefficients) acting on any semiabelian variety defined over $\Fq$ (in our proof, $\Z[F]$ will act on $G$ and also on each $C_i$ and $C_i^{k_i}$).  

After conjugating $\Phi$ with a suitable translation (which does not change the conclusion of our result, according to \cite[Lemma~3.1]{BGSZ}), we may assume without loss of generality that for every $1 \le j \le r$
\begin{equation}
\label{eq:form beta}
\left(\beta_1^{(j)}, \dots, \beta_{k_j}^{(j)}\right) = \left(1,\dots,1,\beta_{i_1^{(j)}}^{(j)},1,\dots, 1,\beta_{i_{\ell_j}^{(j)}}^{(j)}\right), 
\end{equation}
i.e., $\beta_k=1$ unless $k=i_j$ for some $j=1,\dots, \ell_j$ (this is similar to what we used also in the proof of \cite[Proposition~3.10]{G-Sina-20}). 
For every $1\le j \le r$ we choose a point   
\begin{equation}
\label{eq:form alpha}
\vec{\alpha}_j := \left(\alpha_1^{(j)},\dots,\alpha_{i_1^{(j)} - 1}^{(j)},1,\alpha_{i_1^{(j)}+1}^{(j)}, \dots,\alpha_{i_2^{(j)}-1}^{(j)},1,\dots,\alpha_{i_{\ell_j}^{(j)}-1}^{(j)},1\right),
\end{equation} 
such that $\alpha_{i_1^{(j)}}^{(j)},\dots,\alpha_{i_1^{(j)} -1}^{(j)},\beta_{i_1^{(j)}}^{(j)},\alpha_{i_1^{(j)}+1}^{(j)}, \dots,\alpha_{i_{\ell_j}^{(j)}-1}^{(j)},\beta_{i_{\ell_j}^{(j)}}^{(j)}$ are linearly independent over $\End(C_j)$. Now, for every $1 \le i \le r$ let $\varphi_i: C_i^{k_i} \lra C_i^{k_i}$ be the endomorphism corresponding to the matrix $Q_i$ and $\Phi_i: C_i^{k_i} \lra C_i^{k_i}$ be the endomorphism given by $\tau_{\vec{\beta}_i} \circ \varphi_i$. 

Note that any point $\vec{x} \in G$ can be written as $(\vec{x}_1, \dots, \vec{x}_r)$ where $\vec{x}_i \in C_i^{k_i}$. Define $\varphi: G \lra G$ as $\varphi := (\varphi_1, \dots, \varphi_r)$. Next, we let 
\begin{equation}
\label{eq:minimal_polynomial}
P_{\varphi}(x) = x^m + a_{m-1}x^{m-1} + \cdots + a_1x + a_0
\end{equation} 
be the minimal polynomial of $\varphi$ over $\Z$; since $\varphi$ is a unipotent group endomorphism, then we actually know that $P_{\varphi}(x)=(x-1)^m$. (The only relevant information for our proof regarding $P_{\varphi}(x)$ is its degree $m$.)  We also let 
$$\vec{\alpha} := \left(\vec{\alpha}_1, \dots , \vec{\alpha}_r\right) \in G(K)$$ and also $\vec{\beta}:=\left(\vec{\beta}_1,\dots, \vec{\beta}_r\right)\in G(K)$, while we let 
$$\vec{\rho} = \left(\vec{\alpha}, \varphi(\vec{\alpha}), \dots , \varphi^{m-1}(\vec{\alpha}), \vec{\beta}, \varphi(\vec{\beta}), \dots , \varphi^{m-1}(\vec{\beta})\right),$$ 
and for every $1 \le i \le r$, we let 
\begin{equation}
\label{eq:rho_i}
\vec{\rho}_i = \left(\vec{\alpha}_i, \varphi_i(\vec{\alpha}_i), \dots , \varphi_i^{m-1}(\vec{\alpha}_i), \vec{\beta}_i, \varphi_i(\vec{\beta}_i), \dots , \varphi_i^{m-1}(\vec{\beta}_i)\right).
\end{equation} 
The orbit of $\vec{\alpha}$ under $\Phi$ consists of points of the following form:
\[
\matO_{\Phi}(\vec{\alpha}) = \left\{\varphi^n(\vec{\alpha})+\sum_{i=1}^{n-1} \varphi^i(\vec{\beta}) : n \in \N_0 \right\}.
\]
We claim that the orbit of $\vec{\alpha}$ under $\Phi$ is Zariski dense.  We argue by contradiction, and therefore assume that its Zariski closure $V$ is a proper subvariety of $G$. 

We let $\Gamma\subset G$ be the finitely generated $Z[F]$-module consisting of all elements of the form $\vec{v} \cdot \vec{\rho}$, where $\vec{v}\in\Z[F]^{2m}$ (where $\Z[F]$ is the $\Z$-module spanned by the Frobenius operator which acts on any variety $Y$ defined over $\Fq$).  Clearly, we have that $\OO_\Phi(\vec{\alpha})\subseteq \Gamma$. By \cite[Theorem~B]{Moosa-S} (see also Theorem~\ref{Moosa-Scanlon theorem} and  Section~\ref{sec:M-S}), we know that $V \cap \Gamma$ is a union of finitely many sets of the form
\begin{align}
U:=\vec{\gamma} + \Sigma(\vec{\eta}_1,\dots,\vec{\eta}_t; \delta_1,\dots,\delta_t) + H,  \label{eqn:S-sets}  
\end{align}
(for some $t\in\N$), where there exists some positive integer $m_2$ (see Remark~\ref{rem:M-S}) such that 
\begin{equation}
\label{eq:div-hull}
m_2\cdot\gamma,m_2\cdot\eta_1,\dots,m_2\cdot\eta_t\in \Gamma,
\end{equation} 
the $\delta_j$'s are positive integers, $H$ is a subgroup of $\Gamma$ and 
$$\Sigma(\vec{\eta}_1,\dots,\vec{\eta}_t; \delta_1,\dots,\delta_t):=\left\{
\sum_{j=1}^t F^{\delta_jn_j}\cdot \vec{\eta}_j\colon n_j\in\N_0\text{ for }j=1,\dots, t\right\}.$$

Because $\OO_{\Phi}(\vec{\alpha})$ is contained in finitely many sets of the form \eqref{eqn:S-sets}, then there must exist a given set $U$ of the form \eqref{eqn:S-sets} for which the following subset of $\N_0$:  
$$S=\left\{n\in\N_0\colon \Phi^n(\vec{\alpha})\in U\right\}$$
has positive density $d(S)$.

The algebraic closure of $H$ must be an algebraic group $\overline{H}$ contained in the stabilizer of the variety $W$, which is the Zariski closure of $U$. Since $V$ is a proper subvariety and $W\subseteq V$, then $\overline{H}$ must also be a proper algebraic subgroup of $G$. So, there must exist vectors $\vec{\sigma_i} = (\sigma_1^{(i)}, \dots, \sigma_{k_i}^{(i)}) \in \End(C_i)^{k_i}$, not all zero, such that 
\begin{equation}
\label{eq:v kills H}
\left(\vec{\epsilon_i}\right)^{\vec{\sigma_i}} = 1\text{ for each } (\epsilon_1, \dots, \epsilon_r)\in H, \text{ and } 1 \le i \le r\text{ (see \eqref{eq:notation vector morphism 3}).} 
\end{equation}
Let $n\in S$; so,  $\Phi^n(\alpha) \in U$ (see \eqref{eqn:S-sets}). Equation \eqref{eq:div-hull} yields that 
$$m_2\cdot \gamma= \vec{c} \cdot \vec{\rho} \text{ and }m_2 \cdot \eta_i = \vec{b_i} \cdot \vec{\rho} \text{ for each }i=1,\dots, t,$$
where $\vec{c},\vec{b_1},\dots,\vec{b_t}\in \Z[F]^{2m}$ and so,
\begin{equation}
\label{eq:n in}
m_2\cdot \Phi^n(\alpha)=  \Big({\vec{c} +\sum_{j=1}^t F^{\delta_jn_j}\vec{b_j}}\Big) \cdot \vec{\rho} + \vec{u}_n
\end{equation}
for some nonnegative integers $n_j$ and some $\vec{u}_n\in H$. So, combining \eqref{eq:n in} with \eqref{eq:v kills H} and \eqref{eq:rho_i} yields that 
\begin{equation}
\label{eq:n in 2}
(m_2\cdot\Phi_i^n\left(\vec{\alpha}_i\right))^{\vec{\sigma_i}}= \Big( \Big({\vec{c} +\sum_{j=1}^t F^{\delta_jn_j}\vec{b_j}}\Big) \cdot \vec{\rho}_i \Big)^{\vec{\sigma_i}}.
\end{equation}

On the other hand, we know that $\Phi_i^n\left(\vec{\alpha}_i\right)=  \left(\vec{\beta}_i\right)^{\left(\sum_{j=0}^{n-1}Q_i^j\right)} + \left(\vec{\alpha}_i\right)^{\left(Q_i^n\right)}$. Since not all the vectors $\vec{\sigma_1}, \dots, \vec{\sigma_r}$ are equal to zero we have $\vec{\sigma_{s}} \not=0$ for some $1 \le s \le r$. 
We also compute:
\begin{align}
Q_s^n = \begin{pmatrix}
1 &  \binom{n}{1}& \cdots & \binom{n}{i_1^{(s)} - 1} \\
0 & 1 & \cdots & \binom{n}{i_1^{(s)} - 2}\\
\vdots & \vdots & \ddots & \vdots \\
0 & 0 & \cdots & 1
\end{pmatrix} \bigoplus \cdots \bigoplus \begin{pmatrix}
1 &  \binom{n}{1}& \cdots & \binom{n}{i_{\ell_s}^{(s)} - i_{\ell_s-1}^{(s)} - 1} \\
0 & 1 & \cdots & \binom{n}{i_{\ell_s}^{(s)} -i_{\ell_s-1}^{(s)} - 2}\\
\vdots & \vdots & \ddots & \vdots \\
0 & 0 & \cdots & 1
\end{pmatrix} \label{eqn: A^N}
\end{align}
and so,
\begin{align}
Q_s^{n-1} + \cdots + \id = \begin{pmatrix}
n &  \binom{n}{2}& \cdots & \binom{n}{i_1^{(s)}} \\
0 & n & \cdots & \binom{n}{i_1^{(s)}- 1}\\
\vdots & \vdots & \ddots & \vdots \\
0 & 0 & \cdots & n
\end{pmatrix} \bigoplus \cdots \bigoplus \begin{pmatrix}
n &  \binom{n}{2}& \cdots & \binom{n}{i_{\ell_s}^{(s)} - i_{\ell_s-1}^{(s)}} \\
0 & n & \cdots & \binom{n}{i_{\ell_s}^{(s)} - i_{\ell_s-1}^{(s)} - 1}\\
\vdots & \vdots & \ddots & \vdots \\
0 & 0 & \cdots & n
\end{pmatrix}. \label{eqn: sumA^N}
\end{align}
Therefore, using \eqref{eq:n in 2} along with formulas \eqref{eqn: A^N} and \eqref{eqn: sumA^N}, we obtain that for each $n\in S$, we have
\begin{align}
m_2 \cdot \left(\vec{\beta}_s\right)^{\left(\sum_{j=0}^{n-1}Q_s^j\right)^t \cdot \vec{\sigma_s}} + m_2\cdot (\vec{\alpha}_s)^{\left(Q_s^n\right)^t\cdot \vec{\sigma}_s}=\Big( \Big({\vec{c} +\sum_{j=1}^r F^{\delta_jn_j}\vec{b_j}}\Big)\cdot \vec{\rho}_s\Big)^{\vec{\sigma}_s}. \label{eqn:peqn}
\end{align}
Now, both sides in \eqref{eqn:peqn} consist of a $\End(C_s)$-linear combination of 
\begin{equation}
\label{eq:a and b}
\alpha_1^{(s)},\dots, \alpha_{i_1^{(s)}-1}^{(s)},\beta_{i_1^{(s)}}^{(s)},\alpha_{i_1^{(s)}+1}^{(s)},\dots, \alpha_{i_2^{(s)}-1}^{(s)},\beta_{i_2^{(s)}}^{(s)},\alpha_{i_2^{(s)}+1}^{(s)},\dots, \alpha_{i_{\ell_s}^{(s)}-1}^{(s)},\beta_{i_{\ell_s}^{(s)}}^{(s)}
\end{equation}
and since the $k_s$ elements of $C_s(K)$ from \eqref{eq:a and b} are linearly independent over $\End(C_s)$, then it means that the coefficient of each $\alpha_i^{(s)}$ and each $\beta_{i_j^{(s)}}^{(s)}$ appearing in the left-hand side of \eqref{eqn:peqn} must match the corresponding coefficient of the $\alpha_i^{(s)}$, respectively of $\beta_{i_j^{(s)}}^{(s)}$ appearing in the right-hand side of \eqref{eqn:peqn}. 
   
Now, since $\vec{\sigma_s}:=\left(\sigma_1^{(s)},\dots, \sigma_{k_s}^{(s)}\right)$ is nonzero, then there is some $1 \le k \le \ell_s$ such that the tuple  $\left(\sigma_{i_{k-1}^{(s)}+1}^{(s)},\dots,\sigma_{i_{k}^{(s)}}^{(s)}\right)$ is nonzero (where we denoted $i_0:=0$ for convenience). We use equations \eqref{eqn: A^N} and \eqref{eqn: sumA^N} to compute the coefficient of $\beta_{i_k}$ appearing in the left-hand side of \eqref{eqn:peqn} and then comparing it with the coefficient of $\beta_{i_k}$ from the right-hand side of \eqref{eqn:peqn}, we get 
$$
m_2\left(\sigma_{i_{k-1}^{(s)}+1}^{(s)}\cdot \binom{n}{i_k^{(s)} - i_{k - 1}^{(s)}} + \sigma_{i_{k-1}^{(s)}+2}^{(s)}\cdot \binom{n}{i_k^{(s)}-i_{k-1}^{(s)}-1} + \cdots + \sigma_{i_k^{(s)}}^{(s)}\cdot \binom{n}{1}\right)$$
\begin{equation} 
= \tau_0 + \sum_{j=1}^{r}\tau_j \cdot F_{C_s}^{\delta_jn_j},
\label{eq:den 0 equ}
\end{equation}
for some endomorphisms $\tau_0,\dots, \tau_r$ in $\End(C_s)$ (which are independent of $n$ and instead, they only depend on the coordinates of the vectors $\vec{c},\vec{b_1},\dots, \vec{b_r}$ and the entries of the vector $\vec{\sigma}_s$). Now, note that $\End(C_s) \tensor \Q(F_{C_s})$ is a finite-dimensional $\Q(F_{C_s})$-vector space with basis $\mathcal{B} = \{\psi_1, \dots, \psi_{h}\}$. Since the tuple $\left(\sigma_{i_{k-1}^{(1)}+1},\dots, \sigma_{i_k^{(1)}}\right)$ is nonzero, then there must exist $1 \le h_0 \le h$ such that the $\psi_{h_0}$-th coordinates of $\sigma_{i_{k-1}^{(1)}+1},\dots, \sigma_{i_k^{(1)}}$ with respect to the basis $\mathcal{B}$ are not all equal to zero. The coefficient of $\psi_{h_0}$ in the left hand side of equation \eqref{eq:den 0 equ} is equal to
\begin{equation}
\label{eq:density P equ}
P(n) := m_2\cdot \sum_{j=1}^{i_k^{(s)}-i_{k-1}^{(s)}} u_{i_{k-1}^{(s)}+j}\cdot \binom{n}{i_k^{(s)}-i_{k-1}^{(s)}+1-j}
\end{equation}
where, $P \in \Q[x]$ is non-constant and $u_{i_{k-1}^{(s)}+j}$ is the $\psi_{h_0}$-th coordinate of $\sigma_{i_{k-1}^{(s)}+j}$. So, equations \eqref{eq:density P equ} and \eqref{eq:den 0 equ} yield that each element $n\in S$ must satisfy an equation of the form:
\begin{equation}
\label{eq:density form equ}
P(n)=c_0+\sum_{j=1}^r c_jF_{C_s}^{\delta_jn_j},
\end{equation}
for some $n_j\in\N_0$, where the $c_i$'s are the $\psi_{h_0}$-th coordinates of $\tau_0,\tau_1,\dots, \tau_r$. Because $P \in \Z[x]$ is non-constant (while the $\delta_j$'s are positive integers and the $c_j$'s are given), \cite[Theorem~1.1]{GOSS2} yields that $d(S)=0$, therefore contradicting our assumption that $S$ has positive density. Hence, indeed $\OO_\Phi(\alpha)$ must be Zariski dense in $G$ which shows the implication  (ii)$\Rightarrow$(iii). 

Finally, in order to prove that (iii)$\Rightarrow$(i), we know that there exist endomorphisms $\sigma_1, \dots, \sigma_{\ell_j} \in \End(C_j)$ which are not all equal to zero and  $\sum_{k = 1}^{\ell_j} \sigma_k\Big(\beta_{i_k^{(j)}}^{(j)}\Big) = 0$. Let $N:= k_1 + \cdots + k_r$ and $L = k_1 + \cdots + k_{j - 1}$ and consider the morphism $f: G \lra C_j$ given by
$$
(x_1, \dots, x_{N}) \mapsto \sum_{k = 1}^{\ell_j} \sigma_k\Big(x_{i_k^{(j)} + L}\Big),
$$
where we represented each element $x\in G=\prod_{i=1}^r C_i^{k_i}$ as $(x_1,\dots, x_N)$. 
Then $f$ is clearly a non-constant group homomorphism (and thus, a dominant homomorphism since $C_j$ is a simple semiabelian variety), as not all of the $\sigma_i$'s are equal to zero; furthermore, $f \circ \Phi = f$. So, if $\chi: C_j \lra \bP^1$ is any non-constant rational function, then $\chi \circ f: G \lra \bP^1$ would be a non-constant rational which is invariant under $\Phi$. This concludes our proof for Proposition~\ref{prop:unipotentMain4}.
\end{proof}

%%%%%%%%%%%%%%%%%%%%%%%%%%%%%%%%%%%%%%%%%%%%%%%%%%%%%%%%%%%%%%%%%%%%%%%%%%%%%%%
%%%%%%%%%%%%%%%%%%%%%%%%%%%%%%%%%%%%%%%%%%%%%%%%%%%%%%%%%%%%%%%%%%%%%%%%%%%%%%%

\section{A mixed case}
\label{sec:non-unipotent}

In this Section, we extend Proposition~\ref{prop:unipotentMain4} by allowing also a non-unipotent part in the map $\Phi:G\lra G$ (where $G$ is a semiabelian variety defined over $\Fpbar$). Actually, in this special case, we consider even the case when $\Phi$ is only a finite-to-finite map; this result will be instrumental in deriving the general conclusion from Theorem~\ref{thm:main4 0 2}.

\begin{proposition}
\label{prop:split}
Let $G := G_1 \times G_2$ where 
\[
G_1 := \prod_{i=1}^r C_i^{k_i}, \quad G_2 := \prod_{i=1}^r C_i^{k'_i},
\]
and $C_1, \dots, C_r$ are non-isogenous simple semiabelian varieties defined over $\Fpbar$.  (Note that we are allowing $k_1, \dots, k_r, k'_1, \dots, k'_r$ to be equal to zero keeping in mind that $C_i^{0}$ represents the trivial group.) Let $K$ be an algebraically closed field of positive transcendence degree over $\Fpbar$. For every $j=1,\dots,r$, let $Q_j$ be a unipotent $k_j$-by-$k_j$ matrix in Jordan canonical form, i.e.,
$$Q_j:=J_{1,i_1^{(j)}}\oplus J_{1,i_2^{(j)}-i_1^{(j)}}\oplus \cdots \oplus J_{1,i_{s_j}^{(j)}-i_{s_j-1}^{(j)}},$$
where $1\le i_1^{(j)}<i_2^{(j)}<\cdots <i_{s_j}^{(j)}=k_j$, and let $\vec{\beta_j}:=\left(1,\dots, 1,\beta_{i_1^{(j)}}^{(j)},1,\dots, 1,\beta_{i_{s_j}^{(j)}}^{(j)}\right)\in C_j^{k_j}(K)$. We let   
$$\vec{\gamma_j}:=\left(\gamma_1^{(j)},\dots, \gamma_{i_1^{(j)}-1}^{(j)},1,\gamma_{i_1^{(j)}+1}^{(j)},\dots, \gamma_{i_2^{(j)}-1}^{(j)},1,\gamma_{i_2^{(j)}+1}^{(j)},\dots, \gamma_{i_{s_j}^{(j)}-1}^{(j)},1\right)\in C_j^{k_j}(K)$$
and let $\vec{\alpha_j}:=\left(\alpha_1^{(j)},\dots, \alpha_{k'_j}^{(j)}\right)\in C_j^{k'_j}(K)$. Assume  the following elements of $C_j(K)$ are linearly independent over $\End(C_j)$:
\begin{equation}
\label{eq:large group}
\gamma_1^{(j)},\dots, \gamma_{i_1^{(j)}-1}^{(j)},\beta_{i_1^{(j)}}^{(j)},\gamma_{i_1^{(j)}+1}^{(j)},\dots, \gamma_{i_{s_j}^{(j)}-1}^{(j)},\beta_{i_{s_j}^{(j)}}^{(j)},
\end{equation} 
Also, assume that the $\alpha_i^{(j)}$'s (the coordinates of $\vec{\alpha}_j$) are linearly independent from the elements from \eqref{eq:large group} over $\End(C_j)$, i.e., letting $\Gamma_j$ and $\Lambda_j$ be the subgroups of $C_j(K)$ spanned by the action of the elements of $\End(C_j)$ on the elements from \eqref{eq:large group} and on the  $\alpha_i^{(j)}$'s, respectively, we must have $\Gamma_j\cap\Lambda_j=\{0\}$ for every $1 \le j \le r$.

Let $\Phi_1:G_1 \lra G_1$ be the regular map defined by
$$\left(\vec{x}_1, \dots, \vec{x}_r\right) \longmapsto \left(\vec{\beta}_1 +  \vec{x}_1^{Q_1}, \dots, \vec{\beta}_r + \vec{x}_r^{Q_r} \right).$$
Let $\Phi_2$ be a finite-to-finite map from $G_2$ to $G_2$ corresponding to matrices $Q'_1, \dots, Q'_r$ where $Q'_i \in M_{k'_i}(\End^0(C_i))$ for $i=1,\dots,r$ and suppose that we have the next almost commutative diagram 
\begin{equation}
\label{diagram:g}
\begin{tikzcd}
G' \arrow[r, "\Psi'"] \arrow[d, "g'"'] & G' \arrow[d, "g'"] \\
G_2 \arrow[r, "\Phi_2"] & G_2,
\end{tikzcd}
\end{equation}
where $G'$ is a split semiabelian variety, $\Psi'$ is a group endomorphism of $G'$ and $g': G' \lra G_2$ is an isogeny. Let $\vec{\alpha} := (\vec{\alpha}_1, \dots,\vec{\alpha}_r)$ and $\vec{\gamma} := (\vec{\gamma}_1, \dots, \vec{\gamma}_r)$. Assume that for some given orbit $\{y_n\}_{n \ge 0}$ of $\vec{\alpha}$ under $\Phi_2$, then for any positive density subset $S\subseteq\N_0$, the set
$\left\{y_n \colon n\in S\right\}$
is Zariski dense in $G_2$. Then $\left\{\left(\Phi_1^n(\vec{\gamma}), y_n\right): n \in S\right\}$ is Zariski dense in $G$.
\end{proposition}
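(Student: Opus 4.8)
The plan is to argue by contradiction, adapting the endgame of the proof of Proposition~\ref{prop:unipotentMain4} to the product $G=G_1\times G_2$. Suppose that for some $S\subseteq\N_0$ of positive density the Zariski closure $V$ of $\left\{\left(\Phi_1^n(\vec{\gamma}),y_n\right):n\in S\right\}$ is a proper subvariety of $G$. First I would place the orbit inside a finitely generated $\Z[F]$-module $\Gamma=\Gamma_1\times\Gamma_2\subseteq G(K)$: for $\Gamma_1$ one uses that the linear part of $\Phi_1$ is integral over $\Z$, so that every $\Phi_1^n(\vec{\gamma})$ lies in the $\Z[F]$-span of finitely many iterates of $\vec{\gamma}$ and of the $\vec{\beta}_j$'s; for $\Gamma_2$ one lifts the orbit $\{y_n\}$ through the isogeny $g'$ of diagram~\eqref{diagram:g}, using that $\Psi'$ is a genuine group endomorphism of $G'$ and that the almost-commutativity of \eqref{diagram:g} propagates to all iterates with a single torsion bound (Section~\ref{sec:almost}), so that $\{y_n\}\subseteq g'\!\left(\text{f.g. }\Z[F]\text{-module}\right)+G_2[N]$ for a fixed $N$. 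Then Theorem~\ref{Moosa-Scanlon theorem} together with Remark~\ref{rem:M-S} writes $V\cap\Gamma$ as a finite union of $F$-sets $\vec{c}+\Sigma(\vec{\eta}_1,\dots,\vec{\eta}_t;\delta_1,\dots,\delta_t)+H$ with $m_2\vec{c},m_2\vec{\eta}_j\in\Gamma$ for a fixed $m_2\in\N$; since $\left(\Phi_1^n(\vec{\gamma}),y_n\right)\in V\cap\Gamma$ for all $n\in S$, a pigeonhole argument produces one such $F$-set $U$ for which $S':=\left\{n\in S:\left(\Phi_1^n(\vec{\gamma}),y_n\right)\in U\right\}$ has positive density. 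Let $W:=\overline{U}\subseteq V$, a proper subvariety of $G$.

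The heart of the argument is the choice of a good character. Since $S'$ has positive density, the hypothesis gives that $\left\{y_n:n\in S'\right\}$ is Zariski dense in $G_2$; as $\left(\Phi_1^n(\vec{\gamma}),y_n\right)\in W$ for $n\in S'$, the projection $W\to G_2$ is dominant. I claim $\mathrm{Stab}(W)$ does not contain $G_1\times\{0\}$: otherwise $W$ would be invariant under translations by $G_1\times\{0\}$, hence $W=G_1\times\pi_2(W)$ with $\pi_2(W)$ closed and Zariski dense in $G_2$, forcing $W=G$ and contradicting $W\subsetneq G$. Therefore the image of $G_1\times\{0\}$ in $G/\mathrm{Stab}(W)^{\circ}$ is non-trivial, so some simple semiabelian quotient of $G/\mathrm{Stab}(W)^{\circ}$ — necessarily isogenous to one of the factors $C_s$ — receives a non-trivial map from it. Composing with a suitable isogeny yields a group homomorphism $\chi=(\chi_1,\chi_2):G_1\times G_2\lra C_s$ with $\chi_1\neq0$ and $\chi\!\left(\mathrm{Stab}(W)^{\circ}\right)=0$; after multiplying $\chi$ by a suitable positive integer (which cannot kill $\chi_1$, as $\mathrm{Hom}(G_1,C_s)\cong\End(C_s)^{k_s}$ is torsion free) we may assume $\chi(H)=0$. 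Write $\chi_1$ through a tuple $\vec{\sigma}_s\in\End(C_s)^{k_s}\setminus\{\vec{0}\}$ acting on the $C_s^{k_s}$-block of $G_1$ (the other blocks being killed because $C_i,C_s$ are non-isogenous for $i\neq s$).

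Finally I would apply $\chi$ to the identity $m_2\cdot\left(\Phi_1^n(\vec{\gamma}),y_n\right)=m_2\vec{c}+\sum_j F^{\delta_jn_j}m_2\vec{\eta}_j+m_2\vec{u}_n$, valid for $n\in S'$ with $\vec{u}_n\in H$ (so $\chi(\vec{u}_n)=0$). This produces an identity in $C_s(K)$ between an $\End(C_s)[F]$-linear combination of the coordinates of $\vec{\gamma}$ and $\vec{\beta}$ coming from $\chi_1\circ\Phi_1^n$ — whose coefficients are the binomial-in-$n$ expressions read off from $Q_s^n$ and $\sum_{i<n}Q_s^i$ exactly as in \eqref{eqn: A^N}--\eqref{eqn: sumA^N} — and an $\End(C_s)[F]$-linear combination of the coordinates $\alpha_i^{(s)}$ (from $\chi_2(y_n)$, which lies in the $\End(C_s)$-span of those coordinates, and from the $C_s$-part of the generators of $\Gamma_2$). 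Now the linear independence hypotheses — that the elements in \eqref{eq:large group} are $\End(C_s)$-linearly independent and that $\Gamma_s\cap\Lambda_s=\{0\}$ — allow me to equate the coefficient of one suitably chosen generator $\beta_{i_k^{(s)}}^{(s)}$ on the two sides: after extracting a coordinate with respect to a $\Q(F_{C_s})$-basis of $\End(C_s)\otimes\Q(F_{C_s})$, the left side contributes a non-constant polynomial $P(n)\in\Q[x]$ — non-constant precisely because $\vec{\sigma}_s\neq\vec{0}$ forces some Jordan block to contribute a nonzero binomial coefficient, just as in \eqref{eq:den 0 equ}--\eqref{eq:density form equ} — while the right side contributes $c_0+\sum_j c_jF_{C_s}^{\delta_jn_j}$. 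Thus $P(n)=c_0+\sum_j c_jF_{C_s}^{\delta_jn_j}$ for all $n\in S'$, and \cite[Theorem~1.1]{GOSS2} gives $d(S')=0$, contradicting $d(S')>0$.

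The main obstacle is the middle step: one must guarantee that the character $\chi$ extracted from the properness of $W$ can be taken with non-zero $G_1$-component. This is exactly where the Zariski density of $\left\{y_n:n\in S'\right\}$ in $G_2$ is indispensable — without it, $W$ could a priori be of the form $G_1\times W_2$ with $W_2\subsetneq G_2$, every character killing $\mathrm{Stab}(W)$ would factor through $\pi_2$, and the polynomial-versus-exponential equation above would never appear. (The Zariski density of $\left\{\Phi_1^n(\vec{\gamma}):n\in S'\right\}$ in $G_1$, a by-product of the proof of Proposition~\ref{prop:unipotentMain4}, is available as an auxiliary tool but is not required for the argument sketched here.)
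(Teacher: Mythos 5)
Your proposal is essentially the paper's proof, with one cosmetic re-ordering. The paper first runs the GOSS2 argument to show that \emph{every} endomorphism $\sigma$ killing $H$ must have vanishing $G_1$-component, concludes that $\overline{H}=G_1\times H_2$ and hence $W=G_1\times Z$, and then invokes the density of $\{y_n:n\in S'\}$ to force $Z=G_2$, reaching the final contradiction. You instead invoke the density hypothesis first, to rule out $\mathrm{Stab}(W)\supseteq G_1\times\{0\}$ and extract a character $\chi$ killing $H$ with $\chi_1\neq 0$, and then run the GOSS2 argument to get the final contradiction. The two arguments use exactly the same three ingredients (Moosa--Scanlon with \cite[Remark~2.6]{Moosa-S}, the linear-independence hypotheses to match coefficients, and \cite[Theorem~1.1]{GOSS2}) and are logically equivalent; the ordering difference is of no substantive consequence.

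The other mild divergence is technical: the paper lifts everything to $G_1\times G'$, works with the genuine endomorphism orbit of a point $\vec{x}_0$ with $[\ell_2](\vec{x}_0)=(\vec{\gamma},\vec{\alpha})$, and only at the very end transfers Zariski density back through the finite map $[\ell_2]_G\circ g$. You instead confine the specific orbit $\{y_n\}$ to a finitely generated $\Z[F]$-module directly by observing that $[\ell_2](y_n)=\left(g'\circ(\Psi')^n\circ\hat{g}'\right)(\vec{\alpha}_0)$, so $\{y_n\}$ lies in a fixed $[\ell_2]$-preimage of a finitely generated module plus $G_2[\ell_2]$. This is correct (it follows from Remark~\ref{rem:same_denominator}) and is a harmless variant; the paper's lift-then-push-down is slightly more systematic in handling the correspondence $\Phi_2$, but the effect is the same.

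Two details that deserve a one-line justification in a final write-up: (a) that a non-trivial image of $G_1\times\{0\}$ in $G/\mathrm{Stab}(W)^\circ$ (which is again, up to isogeny, a product of powers of the $C_i$) projects non-trivially onto some single factor $C_s$, and (b) that multiplying $\chi$ by the order of $\mathrm{Stab}(W)/\mathrm{Stab}(W)^\circ$ kills $H$ while preserving $\chi_1\neq 0$ because $\mathrm{Hom}(G_1,C_s)$ is torsion-free. You state both; they are correct and immediate.

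Overall: correct, and essentially the paper's proof modulo the re-ordering described above.
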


\begin{proof}
So, we let $S\subseteq \N_0$ be a positive density subset. 

We recall that since diagram~\eqref{diagram:g} is almost commutative (see also Sections~\ref{sec:correspondences}~and~\ref{sec:almost}), then it means that there exists some positive integer $\ell_2$ such that for each $x\in G'$, we have that
\begin{equation}
\label{eq:x G'}
\left(g'\circ \Psi'\right)(x)-\left(\Phi_2\circ g'\right)(x)\in G_2[\ell_2]. 
\end{equation}
At the expense of replacing $\ell_2$ by a multiple of it, we may also assume that given $\hat{g}':G_2\lra G'$, we also have that
\begin{equation}
\label{eq:dual}
g'\circ \hat{g}'=[\ell_2]_{G_2}\text{ and }\hat{g}'\circ g'=[\ell_2]_{G'}.
\end{equation}
Equations \eqref{eq:x G'} and \eqref{eq:dual} yield that
\begin{equation}
\label{eq:G 0}
[\ell_2]\circ \Phi_2=g'\circ \Psi' \circ \hat{g}'. 
\end{equation}
In particular, equation \eqref{eq:G 0} yields that
\begin{equation}
\label{eq:G n}
[\ell_2]\circ \Phi_2^n=g'\circ \left(\Psi'\right)^n\circ \hat{g}'\text{ for each }n\in\N.
\end{equation}

Next, we consider the following almost commutative diagram 
\begin{equation}
\label{eq:almost_commuting_7}
\begin{tikzcd}
G_1 \times G' \arrow[r, "\Psi"] \arrow[d, "g"'] & G_1 \times G' \arrow[d, "g"] \\
G_1 \times G_2 \arrow[r, "{(\Phi_1, \Phi_2)}"] & G_1 \times G_2,
\end{tikzcd}
\end{equation}
where $g := (\id_{G_1}, g')$ and $\Psi := (\Phi_1, \Psi')$. Choose $\vec{\alpha}_0 \in G_2$ such that 
\begin{equation}
\label{eqn:alpha_0}
[\ell_2]_{G_2}(\vec{\alpha}_0) = \vec{\alpha}
\end{equation}
and let $\vec{x}_0 := (\vec{\gamma}, \vec{\alpha}_0)$. Let 
\[
\OO = \left\{\left(g \circ \Psi^n \circ \hat{g}\right)\left(\vec{x}_0\right): n \in S \right\}\subset \left(G_1\times G_2\right)(K).
\]
where $\hat{g} := (\id_{G_1}, \hat{g}')$. Using \eqref{eq:G n}, \eqref{eq:almost_commuting_7} and \eqref{eqn:alpha_0}, it suffices to prove that $\OO$ is Zariski dense in $G_1\times G_2$. So, we assume otherwise and let $V$ be the Zariski closure of $\OO$; then $V$ is a proper subvariety of $G_1\times G_2$.

Since $\Psi'\in\End(G')$ is integral over $\Z$, combined with the fact that the Frobenius endomorphism $F:G'\lra G'$ (corresponding to $\Fq$) is also integral over $\Z$ (inside $\End(G')$), and furthermore $F$ commutes with $g$ and $g'$ (since $F$ is in the center of $\End^0(G')$), we conclude that 
\begin{equation}
    \Delta := \left\{\left( \sum_{i=0}^{n} a_i\left(g \circ \Psi^i \circ \hat{g}\right)\left(\vec{x}_0\right)\right): n\in\N_0\text{ and }a_i \in \Z[F]\right\} \label{eqn:LambdaForm}
\end{equation}
is a finitely generated $Z[F]$-submodule of $G_1\times G_2=G$, which contains $\OO$. So, using Theorem~\ref{Moosa-Scanlon theorem} and arguing identically as in the proof of Proposition~\ref{prop:unipotentMain4}, we have that $V\cap\Delta$ is a finite union of sets of the form
\begin{align}
U:=\vec{\lambda} + \Sigma(\vec{\eta}_1,\dots,\vec{\eta}_s; \delta_1,\dots,\delta_s) + H,  \label{eqn:S-sets 3}  
\end{align}
where there exists some positive integer $m$ such that 
\begin{equation}
\label{eq:div-hull 3}
m\cdot \vec{\lambda},m\cdot \vec{\eta}_1,\dots,m\cdot \vec{\eta}_s\in \Delta,
\end{equation} 
while the $\delta_j$'s are positive integers and $H$ is a subgroup of $\Delta$. Because $\OO$ is entirely contained in the union of finitely many sets as the one from \eqref{eqn:S-sets 3}, at the expense of replacing $S$ with a subset of positive density, there must exist some set $U$ as in \eqref{eqn:S-sets 3} containing $\left(g \circ \Psi^n \circ \hat{g}\right)\left(\vec{x}_0\right)$ for all integers $n$ in $S$. Since we assumed that  $V$ is a proper subvariety of $G$, then the Zariski closure of $H$ must be a proper algebraic subgroup of $G$; so, there must exist an endomorphism $\sigma:G\lra G$ such that  $H\subseteq \ker(\sigma)$. 

We let $L$ be the degree of the minimal (monic) polynomial $P_{\Psi}$ with integer coefficients for which $P_{\Psi}(\Psi)=0$ in $\End(G_1\times G')$. Then  we let 
\[
\vec{\rho} := \left((g \circ \hat{g}), \left(g \circ \Psi \circ \hat{g}\right), \dots, \left(g \circ \Psi^{L-1} \circ \hat{g}\right)\right),
\]
and
\[
\vec{\rho}(\vec{x}) := \left((g \circ \hat{g})(\vec{x}), \left(g \circ \Psi \circ \hat{g}\right)(\vec{x}), \dots, \left(g \circ \Psi^{L-1} \circ \hat{g}\right)(\vec{x})\right),
\]
for every $x \in G$. Note that any element in $\Delta$ must be a linear combination of the coordinates of $\vec{\rho}(\vec{x}_0)$ over $\Z[F]$. So, using equation \eqref{eq:div-hull 3}, there exist vectors $\vec{u}_0, \vec{u}_1, \dots, \vec{u}_s \in \Z[F]^{L}$ such that 
\begin{align}
&m\cdot\vec{\eta}_i = \vec{u_i} \cdot \vec{\rho}(\vec{x}_0) \text{ for each $i=1,\dots, s$ and  } m\cdot \vec{\lambda} = \vec{u_0} \cdot \vec{\rho}\left(\vec{x}_0\right).  \notag    
\end{align}  
So, for each $n \in S$, using that $\left(g \circ \Psi^n \circ \hat{g}\right)\left(\vec{x}_0\right) \in U$ along with equation \eqref{eqn:S-sets 3}, we must have some some non-negative integers $n_i$ (for $i=1,\dots, s$) such that 
\begin{align}
\sigma\left(m\left(g \circ \Psi^n \circ \hat{g}\right)\left(\vec{x}_0\right)\right) &= \sigma\left(\left( \vec{u}_0\cdot \vec{\rho} + \sum_{i=1}^s F^{n_i\delta_i}(\vec{u}_i\cdot\vec{\rho})\right)\left(\vec{x}_0\right)\right).
\label{eq:lastEquality}
\end{align}
Due to the way the coordinates of $\vec{x}_0$ are chosen we know that there does not exist any non-trivial endomorphism of $G_1 \times G_2$ that vanishes at $\vec{x}_0$. Therefore, we must have   
\begin{equation}
\label{eq:equalEndomorphisms1}
\sigma\left(m\left(g \circ \Psi^n \circ \hat{g}\right)\right) = \sigma\left(\left( \vec{u}_0\cdot \vec{\rho} + \sum_{i=1}^s F^{n_i\delta_i}(\vec{u}_i\cdot\vec{\rho})\right)\right)
\end{equation}
for every $n \in S$ (where the integers $n_i$'s depend on $n$). Now, the group endomorphism $\sigma$ corresponds  to some matrix $P$ whose rows are of the form $\vec{v}_1 \oplus \vec{v}_2$ where $\vec{v}_1 \in \prod_{i=1}^r \End(C_i)^{k_i}$ and $\vec{v}_2 \in \prod_{i=1}^r \End(C_i)^{k'_i}$. We know that $m(\left(g \circ \Psi^n \circ \hat{g}\right))$ corresponds to matrices 
$$m Q_1^n, \dots, m Q_r^n, (m\ell_2)(Q'_1)^n, \dots, (m\ell_2)(Q'_r)^n\text{ (see \eqref{eq:G n})}$$ 
for some fixed positive integer $\ell_2$. 

If $\vec{v}_1$ is a nonzero vector, using our hypothesis that the $\beta_{i_k}^{(j)}$'s and the $\gamma_k^{(j)}$'s are linearly independent over $\End(C_j)$, while the $\alpha_k^{(j)}$'s are linearly independent over $\End(C_j)$ with respect to the $\beta_{i_k}^{(j)}$'s and the $\gamma_k^{(j)}$'s, and arguing exactly as in the proof of Proposition~\ref{prop:unipotentMain4} (see equations \eqref{eq:den 0 equ}, \eqref{eq:density P equ} and \eqref{eq:density form equ}) we get that there exists some non-constant polynomial $P_0$, there exists some $s\in \{1,\dots, r\}$,  and there exist $c_0, c_1, \dots, c_r \in \Q\left(F_{C_s}\right)$ such that for each $n\in S$, there are non-negative integers $n_j$ such that
\begin{equation}
\label{eq:finite digits 2}
P_0(n)=c_0+\sum_{j=1}^r c_jF_{C_s}^{\delta_jn_j},
\end{equation}
Since $S$ has positive density, this yields a contradiction to the conclusion of \cite[Theorem~1.1]{GOSS2}. Therefore, for any row of the matrix $P$ of the form $\vec{v}_1 \oplus \vec{v}_2$, we must have $\vec{v}_1 = 0$; this  holds for any endomorphism that kills all of the elements of $H$. So, if we let $\overline{H}$ be the Zariski closure of $H$, then we must have $\overline{H} = G_1 \times H_2$
where $H_2$ is an algebraic subgroup of $G_2$. 
So, letting $W$ be the Zariski closure of $U$ in $G$, then its stabilizer must contain $\overline{H}$ and therefore, it contains $G_1$ (seen as a subgroup of $G_1$ under the natural embedding $\vec{x}\mapsto \vec{x}\oplus \vec{0}_{G_2}$); i.e., for each $\vec{\epsilon}_1\in G_1$ and each $\vec{\mu}\in W$, we have that $\left(\vec{\epsilon} \oplus \vec{0}_{G_2}\right) + \vec{\mu}\in W$. Hence $W= G_1 \times Z$, for some subvariety $Z\subseteq G_2$. However, $Z$ must contain each $\left(g'\circ \left(\Psi'\right)^n\circ \hat{g}'\right)(\vec{\alpha}_0)$ for $n\in S$. Using equations \eqref{eq:G n} and \eqref{eqn:alpha_0} we must have 
\[
\left(g'\circ \left(\Psi'\right)^n\circ \hat{g}'\right)(\vec{\alpha}_0) - y_n \in Z[\ell_2].
\]
By our hypothesis, $\{y_n\}_{n \in S}$ is Zariski dense in $G_2$ and therefore, 
$$\left\{\left(g'\circ \left(\Psi'\right)^n\circ \hat{g}'\right)(\vec{\alpha}_0): n \in S\right\}$$ 
must actually be Zariski dense in $G_2$, which yields that $Z = G_2$. Thus, $W=G$ and indeed $\OO$ must be Zariski dense in $G$. 

Now, using the fact that \eqref{eq:almost_commuting_7} is almost commutative along with \eqref{eq:G n}, we have that
\begin{equation}
\label{eq:almost_commuting_5}
g\left(\Psi^n(\hat{g}(\vec{x}_0))\right)-(\Phi_1^n(\vec{\gamma}), y_n)\in G[\ell_2].
\end{equation}
 So, letting $\tilde{g}:=[\ell_2]_{G}\circ g$ be the composition of $g$ with the multiplication-by-$\ell_2$ map on $G$, we obtain a finite regular map $\tilde{g}:G_1 \times G'\lra G$. Equation~\eqref{eq:almost_commuting_5} yields that 
\begin{equation}
\label{eq:almost_commuting_6}
\tilde{g}\left(\Psi^n(\hat{g}(\vec{x}))\right)=[\ell_2]_{G}(\Phi_1^n(\vec{\gamma}), y_n) \text{ for each }n\ge 1
\end{equation}
and since $\left\{\left(g \circ \Psi^n \circ \hat{g}\right)\left(\vec{x}_0\right)\colon n\in S\right\}$ is Zariski dense, then also the sequence $\{\left\{\left(\tilde{g} \circ \Psi^n \circ \hat{g}\right)\left(\vec{x}_0\right)\colon n\in S\right\}\}\subset G$ is Zariski dense. But then equation~\eqref{eq:almost_commuting_6} yields that  $\{(\Phi_1^n(\vec{\gamma}), y_n): n \in S\}$ must be Zariski dense in $G$ since $[\ell_2]_{G}$ is a finite map. This concludes our proof of Proposition~\ref{prop:split}. 
\end{proof}

\section{The case of group endomorphisms whose eigenvalues are powers of the Frobenius element in the endomorphism ring}
\label{sec: case 2}

The next result provides the conclusion in Theorem~\ref{thm:main4 0 2} in the case we have a group endomorphism of a split reduced semiabelian variety, whose corresponding eigenvalues are powers of the Frobenius element in the endomorphism ring.

\begin{proposition}
\label{prop:noUnity}
Let  $K$ be an algebraically closed field of characteristic $p$ with transcendence degree $d\ge 1$ over $\Fpbar$, and let $G$ be a reduced split semiabelian variety, i.e.,
$$G:=\prod_{i=1}^r C_i^{k_i},$$
where the $C_i$'s are non-isogenous simple semiabelian varieties defined over some finite subfield $\Fq\subset K$, while the $k_i$'s are positive integers. We let $F:G\lra G$ be the Frobenius endomorphism of $G$ associated to the finite field $\Fq$; also, for each $i=1,\dots, r$, we let  $F_{C_i}\in\End(C_i)$  be the corresponding Frobenius for each semiabelian variety $C_i$. Let  $\Phi: G \lra G$ be a dominant group endomorphism corresponding to matrices $ Q_i \in M_{k_i,k_i}(\End(C_i))$ for $1 \le i \le r$. Assume that each matrix $Q_j$ is a Jordan canonical matrix of the form:  
\begin{equation}
\label{eqn:Q_iForm}
J_{F^{n_{1}^{(j)}},i_1^{(j)}}\oplus J_{F^{n_{2}^{(j)}},i_2^{(j)}-i_1^{(j)}}\oplus \cdots \oplus J_{F^{n_{s_j}^{(j)}},i_{s_j}^{(j)}-i_{s_j-1}^{(j)}},
\end{equation}
where $1\le i_1^{(j)}< i_2^{(j)}<\cdots < i_{s_j}^{(j)}=k_j$, and $s_j$, $n_\ell^{(j)}$ are positive integers. (for $1\le j\le r$ and $1\le \ell\le s_j$). 
 
Then one of the following statements must hold:
\begin{itemize}
\item[(A)] There exists $\vec{\alpha} = (\vec{\alpha}_1, \dots, \vec{\alpha}_r)\in G(K)$ where $\vec{\alpha}_i \in C_i^{k_i}$ for each $1 \le i \le r$, whose orbit under $\Phi$ is Zariski dense in $G$. Furthermore, given any finitely generated submodules $\Gamma_1, \dots, \Gamma_r$ where $\Gamma_i \subset C_i(K)$ is an $\End(C_i)$-submodule for every $1 \le i \le r$, one can choose $\vec{\alpha}\in G(K)$ such that
\begin{itemize}
\item[(i)] the subgroup spanned by the action of the elements of $\End(C_i)$ on $\alpha_i^{(1)},\dots,\alpha_i^{(k_i)}$ (the coordinates of $\vec{\alpha}_i$) has trivial intersection with $\Gamma_i$ for every $1 \le i \le r$; and
\item[(ii)] for any subset $S$ of positive integers with positive density we have $\{\Phi^n(\alpha): n \in S\}$ is Zariski dense in $G$.
\end{itemize}
\item[(B)] There exist $1 \le j_1 \le \cdots \le j_{\ell} \le r$, and $u_1, \dots, u_{\ell}$ satisfying $1 \le u_{k} \le s_{j_k}$ for every $1 \le k \le \ell$, such that the pairs $(j_k, u_k)$ are distinct and we have: 
\[
n_{i_{u_1}}^{(j_1)} = \dots = n_{i_{u_{\ell}}}^{(j_\ell)},
\]
along with  $\sum_{t=1}^\ell \dim\left(C_{j_t}\right)  > d$. 
\end{itemize}
\end{proposition}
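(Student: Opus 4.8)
\noindent
The plan is to establish that whenever conclusion (B) fails, conclusion (A) holds (in its strong form with (i) and (ii)); so assume (B) fails. Taking $\ell=1$ in (B) already forces $\dim(C_i)\le d$ for every $i$ (each $k_i\ge 1$, so each $Q_i$ has at least one Jordan block). More usefully, for each exponent $m$ occurring among the $n_\ell^{(j)}$, let $\mathcal P_m$ be the set of all pairs $(i,\ell)$ such that the $\ell$-th block of $Q_i$ has eigenvalue $F^m$; then the failure of (B) is exactly the statement that $\sum_{(i,\ell)\in\mathcal P_m}\dim(C_i)\le d$ for every such $m$. Guided by this, I would choose the point $\vec\alpha=(\vec\alpha_1,\dots,\vec\alpha_r)$ so that: (1) for every $i$ the coordinates of $\vec\alpha_i$ are linearly independent over $\End(C_i)$ and the $\End(C_i)$-submodule they span meets the prescribed module $\Gamma_i$ only in $0$ (arranged directly, and giving property (i)); and (2) for every exponent $m$, the tuple formed by the last coordinate of each block in $\mathcal P_m$ is a generic point of $\prod_{(i,\ell)\in\mathcal P_m}C_i$ — which is possible precisely because of that bound. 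Requirement (2) for different exponents $m$ can be imposed simultaneously (and is compatible with (1)): powers $F_{C_i}^{mn}$ of the Weil number $F_{C_i}$ — or of $q$, when $C_i\cong\bG_m$ — for distinct $m$ grow at incomparable rates, so distinct-exponent blocks never impose a joint algebraic relation, and one may reuse the same generic points of $C_i$ across exponents.

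The heart of the proof is then a density argument patterned on Proposition~\ref{prop:unipotentMain4}. Suppose some orbit $\{\Phi^n(\vec\alpha)\}$ has proper Zariski closure $V$. Since $F=F_G$ lies in each $\End(C_i)$ and every entry of $Q_i^n$ is a $\Z$-combination of powers of $F$, the orbit lies in the finitely generated $\Z[F]$-module $\Gamma$ generated by the coordinates of $\vec\alpha$; Theorem~\ref{Moosa-Scanlon theorem} together with Remark~\ref{rem:M-S} then produces a positive-density set $S\subseteq\N_0$ and a single $F$-set $U=\vec\lambda+\Sigma(\vec\eta_1,\dots,\vec\eta_t;\delta_1,\dots,\delta_t)+H$ with $\Phi^n(\vec\alpha)\in U$ for all $n\in S$ and $m_0\cdot\vec\lambda,\,m_0\cdot\vec\eta_j\in\Gamma$ for some $m_0\in\N$. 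The Zariski closure $\overline H$ is a proper algebraic subgroup of $G$, hence — using that the $C_i$ are pairwise non-isogenous, so $\End(G)=\prod_i M_{k_i}(\End(C_i))$ — killed by a nonzero $\sigma=(\sigma_1,\dots,\sigma_r)$; fix $s$ with $\sigma_s\ne 0$. Applying $\sigma$, projecting onto $C_s^{k_s}$, expanding $Q_s^n$ block-by-block (a size-$b$ block with eigenvalue $F^m$ contributes, in its $k$-th row, the entries $\binom{n}{j}F^{m(n-j)}$ for $0\le j\le b-k$), comparing the coefficients of the coordinates of $\vec\alpha_s$ via their $\End(C_s)$-linear independence, and finally projecting onto one vector of a fixed $\Q(F_{C_s})$-basis of $\End^0(C_s)$, one arrives at: for each $n\in S$ there are $n_1,\dots,n_t\in\N_0$ with
\begin{equation}
\sum_{m\in E_s} P_m(n)\, F_{C_s}^{mn} \;=\; c_0+\sum_{j=1}^{t} c_j\, F_{C_s}^{\delta_j n_j},
\end{equation}
where $E_s$ is the set of eigenvalue-exponents of $Q_s$, and the $P_m\in\Q(F_{C_s})[x]$ and $c_0,c_j\in\Q(F_{C_s})$ depend only on $\sigma,\vec\lambda,\vec\eta_j$ (not on $n$), while $\sigma_s\ne 0$ makes the left-hand side non-trivial.

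To conclude I would split on the positions in $\vec\alpha_s$ that $\sigma_s$ actually involves. If $\sigma_s$ has a nonzero entry in a column that is \emph{not} the last coordinate of its block, the binomial factors make some $P_m$ non-constant; a standard analysis of the resulting generalized power-sum identity (isolating the dominant Frobenius exponent, using $|F_{C_s}|>1$ at every archimedean place, and then invoking \cite[Theorem~1.1]{GOSS2} exactly as in Proposition~\ref{prop:unipotentMain4}) gives $d(S)=0$, contradicting $d(S)>0$. The remaining — and genuinely delicate — case is when $\sigma_s$ involves only the last coordinates of blocks, on which $\Phi$ acts by the pure diagonal $\bigoplus_\ell F^{m_\ell}$: then all $P_m$ are constants and the identity degenerates to a vanishing relation among powers of $F_{C_s}$, which the Moosa--Scanlon bookkeeping alone cannot rule out. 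Here one must instead exploit the genericity arranged in step (2): the projection of $\{\Phi^n(\vec\alpha):n\in S\}$ onto $\prod_{(i,\ell)\in\mathcal P_m}C_i$ is the sweep of a generic point under powers of Frobenius (combining, across the different exponents $m$, the per-exponent genericity with the incomparable-growth-rate decoupling), hence Zariski dense there, which is incompatible with $\{\Phi^n(\vec\alpha):n\in S\}$ being confined to $U$. Thus $V$ cannot be proper. Since $S$ was an arbitrary positive-density set, the argument in fact yields the stronger density property (ii), and property (i) was built into the choice of $\vec\alpha$. The main obstacle is exactly this pure-Frobenius case: one must set up the transcendence-genericity of $\vec\alpha$ coherently across all $C_i$ and all exponents $m$, track how the Zariski closure of an $F$-set forces linear relations among the generic coordinates, and verify that the combinatorial condition making this genericity realizable is precisely the negation of (B) — bookkeeping with no analogue in the torus case (where each $C_i$ is $1$-dimensional and $\End(C_i)=\Z$), which is why the authors flag this section as the most technical in the paper.
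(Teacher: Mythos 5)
Your proposal follows essentially the same strategy as the paper's proof: choose $\vec\alpha$ using a transcendence basis of $K/\Fpbar$ whose coordinates are allocated to Jordan blocks according to the partition by Frobenius exponent (the negation of (B) is exactly what makes this allocation fit inside $d$ variables), invoke Moosa--Scanlon to confine a positive-density subset of the orbit to an $F$-set with a proper stabiliser killed by $\sigma$, show that a nonzero entry of $\sigma$ away from the ``diagonal'' position of a block produces a nonconstant polynomial times a Frobenius power and hence contradicts the sparsity theorem of Ghioca--Ostafe--Saleh--Shparlinski, and finally handle the remaining ``pure Frobenius'' degeneracy via the transcendence-genericity of the diagonal coordinates. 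This matches the paper's Lemma~\ref{lem:j-k}, Claim~\ref{claim:crucial}, and the concluding degree computation (equations~\eqref{eqn:orbitCoords}--\eqref{eqn:degIneq_3}).

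Two small remarks worth being aware of. First, the paper's $\vec\alpha$ is \emph{block-constant} (each Jordan block carries the same generic point $\alpha_{\ell,j}$ repeated), not coordinate-wise $\End(C_i)$-independent as in your step (1); this way the coefficient of a single $\alpha_{\ell,j}$ already aggregates the whole block's binomial contributions, which streamlines the nonconstant-polynomial step. Your version should also go through but produces more coefficient equations to compare. (The ``first vs.\ last coordinate of a block'' discrepancy is purely a row/column convention for $\gamma^Q$ and is immaterial.) Second, the one part of your sketch that is genuinely load-bearing but not yet an argument is the assertion that the Frobenius sweep of the carefully chosen generic tuple is Zariski dense in $\prod_i C_i^{s_i}$ over a positive-density set of $n$'s, with the ``incomparable growth rates'' decoupling different exponents: the paper makes this precise by assuming a nonzero polynomial $\mathcal Q$ vanishes on the orbit, comparing $\tilde t$-degrees monomial-by-monomial (which is where the disjointness of transcendence variables within each $\mathcal P_m$ and the distinctness of the exponents $n_i^{(j)}$ across parts are both used), and deriving a bounded nonzero integer combination of $q^{n\kappa_e}$ with distinct $\kappa_e$, a contradiction. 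Your instinct that this is the crux is right; the boundedness/degree bookkeeping is what has to be written out. Also note the citation used in the non-unipotent density step is \cite[Theorem~1.2]{GOSS} (linear recurrence with non-simple roots), not \cite[Theorem~1.1]{GOSS2} (which handles the unipotent polynomial case of Proposition~\ref{prop:unipotentMain4}).
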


\begin{proof}
In our proof,  by convention, we let $i_0^{(j)}=0$ for each $1\le j\le r$. 

If conclusion $(B)$ holds then we are done. So, assume from now on, that conclusion~$(B)$ does not hold. In particular, we have some finitely generated subgroups $\Gamma_i$ as in conclusion~(A) from Proposition~\ref{prop:noUnity}.

Each component $C_j$ of $G$ is embedded in $\Pl^{N_j}$ (with coordinate axes labeled $x_i$ for $1\le i\le N_j+1$) for some $N_j \in \N$. We let $d_j := \dim(C_j)$ for every $j = 1,\dots, r$; without loss of generality, we assume each $C_j$ projects dominantly onto the first $d_j$ coordinates of $\Pl^{N_j}$, 
i.e., the projection $\left(x_1:x_2\cdots :x_{N_j+1}\right)\mapsto \left(x_1:x_2:\cdots :x_{d_j}:x_{N_j+1}\right)$ induces a dominant rational map $\pi_j:C_j\dra \mathbb{P}^{d_j}$.

We let 
\[
\mathcal{P} = \left\{ (j, \ell): 1 \le j \le r, 1 \le \ell \le s_j \right\}, 
\]
be a totally ordered set with the usual lexicographical order. We can partition $\mathcal{P}$ by the sets 
\begin{equation}
\label{eq:def_Pm}
\mathcal{P}_n = \left\{ (j, \ell): 1 \le j \le r, 1 \le \ell \le s_j, n_\ell^{(j)} = n \right\}
\end{equation}
and we extend the lexicographic order on each $\mathcal{P}_n$.

Let $\{t_1, \dots, t_{d}\} \subset K$ be an arbitrary algebraically independent set over $\Fpbar$ (i.e., a transcendence basis for $K/\Fpbar$). 

For every $j=1,\dots,r$ and each $\ell=1,\dots,s_j$ there must exist a unique $n \in \N$ such that $(j, \ell) \in \mathcal{P}_n$. Letting $(j_1, \ell_1), \dots, (j_u, \ell_u)$ be all the pairs in $\mathcal{P}_n$ that are smaller than $(j, \ell)$, with respect to the lexicographical order imposed on $\mathcal{P}$, we can define 
\begin{equation}
\label{eq:S-l-j}
S_{\ell, j} := d_{j_1} + \cdots + d_{j_u}, 
\end{equation}
where $S_{\ell, j}$ is defined to be equal to zero whenever $(j,\ell)$ is the smallest pair in $\mathcal{P}_n$.  We also let 
\begin{equation}
\label{eq:t-l-j}
t_{k, \ell, j} := t_{S_{\ell, j} + k}
\end{equation}
for every $1 \le k \le d_j$ (and each $(j,\ell)\in \mathcal{P}$). Note that since condition (B) is not met we must have $S_{\ell, j} + k \le d$ for every possible choice of $\ell, j, k$ which means that $t_{k, \ell, j}$ is well-defined. Indeed, the fact that $(j,\ell)$ along with each $(j_t,\ell_t)$ for $1\le t\le u$ are contained in $\mathcal{P}_n$ (where the pairs $(j_t,\ell_t)$ are all the pairs contained in $\mathcal{P}_n$ smaller than $(j,\ell)$) means that
$$n_\ell^{(j)}=n_{\ell_1}^{(j_1)}=\cdots = n_{\ell_u}^{(j_u)}$$
and so, our assumption that condition (B) from Proposition~\ref{prop:noUnity} does not hold yields that
$$d_j+d_{j_1}+\cdots + d_{j_u}\le d,$$
as desired. 

Next, for each $j=1,\dots, r$, and for each $S:=S_{\ell,j}$ (for some $1\le \ell\le s_j$), we choose a point $\alpha_{j}^{(S)}\in C_j(K)$ (note that $\pi_j$ is a dominant map and $\left(t_{S+1}:t_{S+2}:\dots : t_{S+d_j}:1\right)$ is a generic point for $\bP^{d_j}_{\Fpbar}$) such that:
\begin{equation}
\label{eqn:alphaCoord}
\pi_j\left(\alpha_j^{(S)}\right):=\left(t_{S+1}:t_{S+2}:\dots : t_{S+d_j}:1\right)
\end{equation}
Then for each $j=1,\dots, r$ and for each $\ell=1,\dots, s_j$, we let
\begin{equation}
\label{eq:same points}
\alpha_{\ell,j}:=\alpha_j^{(S_{\ell,j})}.
\end{equation} 
Also, recalling that $t_{k,\ell,j}:=t_{S_{\ell,j}+k}$ for each $j=1,\dots, r$, each $\ell=1,\dots, s_j$ and each $k=1,\dots, d_j$, then we see that
\begin{equation}
\label{eq:alphaCoord 2}
\pi_j\left(\alpha_{\ell,j}\right)=\left(t_{1,\ell,j}:t_{2,\ell,j}:\cdots :t_{d_j,\ell,j}:1\right).
\end{equation}
These points $\alpha_{\ell,j}$  satisfy the following two conditions: 
\begin{itemize}
\item[(1)] given any \emph{distinct} pairs $(\ell_1,j_1),\dots, (\ell_u,j_u)$ (for some $u\in\N$) such that 
$$n_{\ell_1}^{(j_1)} = n_{\ell_2}^{(j_2)}=\cdots =n_{\ell_u}^{(j_u)},$$ 
(i.e., they all belong to the same part $\mathcal{P}_n$ in the partition of  $\mathcal{P}$), we have that  
\begin{equation}
\left\{t_{1, \ell_1, j_1}, \dots , t_{d_{j_1}, \ell_1, j_1}, t_{1, \ell_2, j_2}, \dots , t_{d_{j_2}, \ell_2, j_2},\dots,t_{1,\ell_u,j_u},\dots, t_{d_{j_u},\ell_u,j_u}\right\}    
\end{equation} 
is an algebraically independent set over $\Fpbar$ since the above $t_{k,\ell_u,j_u}$'s  are distinct elements of the transcendence basis for $K/\Fpbar$ due to the definition~\eqref{eq:t-l-j} along with the fact that the sums $S_{\ell_v,j_v}$ are all distinct for $v=1,\dots, u$ (and furthermore, if the pair $\left(j_{v_1},\ell_{v_1}\right)$ is smaller than the pair $\left(j_{v_2}, \ell_{v_2}\right)$, then $S_{\ell_{v_2},j_{v_2}}\ge S_{\ell_{v_1},j_{v_1}}+d_{j_{v_1}}$ due to definition~\eqref{eq:S-l-j}). 
\item[(2)] For any \emph{given} $1\le j\le r$ and any \emph{distinct} points $$\alpha_{\ell_1,j},\dots, \alpha_{\ell_u,j}\text{ for some }u\ge 1,$$
we have that these points are linearly independent over $\End(C_j)$. Indeed, since these points are distinct (which is equivalent, due to equation~\eqref{eq:alphaCoord 2},  with the fact that the sums $S_{\ell_v,j}$ are distinct for $v=1,\dots, u$), we have that each $\alpha_{\ell_v,j}$ is the generic point of the simple semiabelian variety $C_j$ in a different algebraically closed subfield $K_{v,j}\subset K$. Furthermore, letting - without loss of generality - $S_{\ell_u,j}$ be the largest sum among the sums $S_{\ell_v,j}$ (for $v=1,\dots, u$), then we have that $K_{u,j}$ is not contained in the compositum of the fields $K_{v,j}$ for $1\le v<u$. So, any linear dependence relation between the points $\alpha_{\ell_v,j}$ of the form
\begin{equation}
\label{eq:linear-dependence}
\sum_{v=1}^u \psi_v(\alpha_{\ell_v,j})=0
\end{equation} 
for some $\psi_1,\dots, \psi_u\in \End(C_j)$ would force that $\psi_u=0$. Then repeating the same reasnoning to the remaining $(u-1)$ distinct points $\alpha_{\ell_v,u}$ (for $1\le v<u$) yields that indeed the only possibility for equation~\eqref{eq:linear-dependence} to hold is when each endomorphism $\psi_v$ is the trivial one.  
\end{itemize}
Moreover, since $C_j(K)\otimes_\Z \Q$ is an infinite dimensional $\Q$-vector space, while $\End(C_j)$ is a finite $\Z$-module and also, each $\Gamma_j$ is a finitely generated $\End(C_j)$-module, one can choose the elements $\{t_1, \dots, t_d\}$ so that the following condition is also satisfied: 
\begin{itemize}
    \item[(3)] for each $j=1,\dots, r$, the $\End(C_j)$-submodule spanned by the action of the elements of $\End(C_j)$ on $\alpha_{i,j}$ (for $1\le i\le s_j$)  has trivial intersection with $\Gamma_j$.   
\end{itemize}

We construct the point
\[
\vec{\alpha} := (\vec{\alpha_1}, \dots, \vec{\alpha_r})\in G(K),
\]
where for each $j=1,\dots, r$, 
\begin{equation}
\vec{\alpha}_j = (\underbrace{\alpha_{1,j}, \dots, \alpha_{1,j}}_\text{$i_1^{(j)}$ times}, \underbrace{\alpha_{2, j}, \dots, \alpha_{2, j}}_\text{$i_2^{(j)} - i_1^{(j)}$ times}, \dots, \underbrace{\alpha_{s_j, j}, \dots, \alpha_{s_j, j}}_\text{$i_{s_j}^{(j)} - i_{s_j - 1}^{(j)}$ times})\in C_j^{k_j}(K).    
\end{equation}
Then,  condition~(i) from conclusion~(A) in Proposition~\ref{prop:noUnity} is satisfied by our choice for $\vec{\alpha}\in G(K)$ (see property~(3) above).  Next we prove that also condition~(ii) in conclusion~(A) holds for the orbit of $\vec{\alpha}$ under $\Phi$, i.e., in particular, we prove that its orbit $\OO_\Phi(\vec{\alpha})$ is Zariski dense in $G$.

So, we let $S_0\subseteq \N$ be a set of positive density, and we will prove that 
\begin{equation}
\label{eq:Set}
\mathcal{T}_{\Phi,S_0,\vec{\alpha}}:=\left\{\Phi^n(\vec{\alpha})\colon n\in S_0\right\}\text{ is Zariski dense in $G$.}
\end{equation}  
Suppose for the sake of contradiction that this is not the case. Let $V$ be the Zariski closure of the set $\mathcal{T}:=\mathcal{T}_{\Phi,S_0,\vec{\alpha}}$ from \eqref{eq:Set}. We let $\Gamma\subset G$ be the finitely generated $\Z[F]$-module consisting of all elements of the form $\sigma(\alpha)$, where $\sigma$ is in $\End(G)$; clearly, $\mathcal{T}\subseteq \Gamma$. Note that $\Gamma$ is indeed finitely generated as $\End(G)$ is a finitely generated module over $\Z$. By Theorem~\ref{Moosa-Scanlon theorem} (see also Section~\ref{sec:M-S}), we know that $V \cap \Gamma$ is a union of finitely many $F$-sets (just as in equation~\eqref{eqn:S-sets}; see also equation~\eqref{eqn:S-sets 22} below). Because $\mathcal{T}$ is contained in finitely many sets of the form 
\begin{equation}
\label{eqn:S-sets 22}
U:=\vec{\gamma} + \Sigma(\vec{\eta}_1,\dots,\vec{\eta}_t; \delta_1,\dots,\delta_t) + H,    
\end{equation}
then there must exist a given set $U$ of the form \eqref{eqn:S-sets 22} for which the following subset of $\N_0$:  
$$S_1=\left\{n\in S_0\colon \Phi^n(\vec{\alpha})\in U\right\}$$
has positive density $d(S_1)$. Furthermore, we know that there exists some positive integer $m$ such that
\begin{equation}
\label{eq:div-hull 22}
m\cdot\gamma,m\cdot\eta_1,\dots,m\cdot\eta_t\in \Gamma,
\end{equation} 
while the $\delta_j$'s are positive integers, $H$ is a subgroup of $\Gamma$ and (as before), we have the set 
$$\Sigma(\vec{\eta}_1,\dots,\vec{\eta}_t; \delta_1,\dots,\delta_t):=\left\{
\sum_{j=1}^t F^{\delta_jn_j}\cdot \vec{\eta}_j\colon n_j\in\N_0\text{ for }j=1,\dots, t\right\},$$
where $F:G\lra G$ is the Frobenius endomorphism corresponding to the field $\Fq$.

The algebraic closure of $H$ must be an algebraic group $\mathcal{H}$ contained in the stabilizer of the variety $W$, which is the Zariski closure of $U$. Since $V$ is a proper subvariety and $W\subseteq V$, then $\mathcal{H}$ must also be a proper algebraic subgroup of $G$. So, there must exist vectors $\vec{\sigma_i} = (\sigma_1^{(i)}, \dots, \sigma_{k_i}^{(i)}) \in \End(C_i)^{k_i}$ (for each $i=1,\dots, r$), not all the vectors $\vec{\sigma}_i$ being trivial, such that  the following equation holds: given any point $\left(\vec{\epsilon}_1, \dots, \vec{\epsilon}_r\right)\in H$, we have that  
\begin{equation}
\label{eq:v kills H 22}
\left(\vec{\epsilon_i}\right)^{\vec{\sigma_i}} = 1\text{ for each } i=1,\dots, r. 
\end{equation}

Using equation \eqref{eq:div-hull 22} along with the fact that $\Gamma$ is the cyclic $\End(G)$-module generated by $\vec{\alpha}$, we get that 
$$m\cdot \gamma= \tau(\alpha) \text{ and }m \cdot \eta_i = \tau_i(\alpha) \text{ for each }i=1,\dots, t,$$
where $\tau,\tau_1, \dots, \tau_t\in \End(G)$ and so,
\begin{equation}
\label{eq:n-in}
m\cdot \Phi^n(\vec{\alpha})=  \Big(\tau\left(\vec{\alpha}\right)+\sum_{j=1}^t F^{\delta_jn_j}\left(\tau_j\left(\vec{\alpha}\right)\right)\Big) + \vec{\upsilon}_n
\end{equation}
for some nonnegative integers $n_j$ and some $\vec{\upsilon}_n\in H$. 

For each $i=1,\dots, r$, we let $\Phi_i:=\Phi_{\big|C_i^{k_i}}$, which induces an endomorphism of $C_i^{k_i}$. On the other hand, for $\tau$ and also for $\tau_j$ (for $1\le j\le t$), we let $\tau^{(i)}$, respectively $\tau_j^{(i)}$ represent the restriction $\tau_{\big|C_i^{k_i}}$, respectively $(\tau_j)_{\big|C_i^{k_i}}$ which induce endomorphisms of $C_i^{k_i}$ for each $i=1,\dots, r$. Finally, we use $F^{(i)}$ to denote the Frobenius action on $C_i^{k_i}$ for each $i=1,\dots, r$. Combining \eqref{eq:n-in} with \eqref{eq:v kills H 22} yields that for each $i=1,\dots, r$, we have: 
\begin{equation}
\label{eq:n-in-2}
(m\cdot\Phi_i^n(\vec{\alpha}_i))^{\vec{\sigma_i}}= \Big( \Big(\tau^{(i)}(\vec{\alpha}_i) +\sum_{j=1}^t \Big(F^{(i)}\Big)^{\delta_jn_j}(\tau_j^{(i)}(\vec{\alpha}_i))\Big) \Big)^{\vec{\sigma_i}}.
\end{equation}

On the other hand, according to our hypothesis from Proposition~\ref{prop:noUnity}, we know that $\Phi_i^n(\vec{\alpha}_i)= (\vec{\alpha}_i)^{Q_i^n}$ and so, for each $k=1,\dots,r$ we have: 
\begin{align}
Q_k^n = \bigoplus_{j =1}^{s_k}\begin{pmatrix}
F_{C_k}^{n\cdot n_j^{(k)}} &  \binom{n}{1}F_{C_k}^{(n-1)\cdot n_j^{(k)}}& \cdots & \binom{n}{i_j^{(k)} - i_{j-1}^{(k)} - 1}F_{C_k}^{\left(n - i_j^{(k)} + i_{j-1}^{(k)} + 1\right)\cdot n_1^{(k)}} \\
0 & F_{C_k}^{n\cdot n_j^{(k)}} & \cdots & \binom{n}{i_j^{(k)} - i_{j-1}^{(k)} - 2}F_{C_k}^{\left(n - i_j^{(k)} + i_{j-1}^{(k)} + 2\right)\cdot n_j^{(k)}}\\
\vdots & \vdots & \ddots & \vdots \\
0 & 0 & \cdots & F_{C_k}^{n\cdot n_j^{(k)}}
\end{pmatrix}
\end{align}
where (as before) we use the convention that $i_0^{(k)}=0$. 

Next we employ the following technical Lemma. 
\begin{lemma}
\label{lem:j-k}
Let $u\in\{1,\dots, r\}$. Suppose now that some coordinate $\sigma_v^{(u)}$ of $\vec{\sigma}_u$ is nonzero. Then it must be that $v= i_{\ell-1}^{(u)}+1$ for some $\ell=1,\dots, s_{u}$. 
\end{lemma}

\begin{proof}[Proof of Lemma~\ref{lem:j-k}.]
We argue by contradiction and therefore, assume there exists some coordinate $v\ne i_{\ell-1}^{(u)}+1$ (for each $\ell=1,\dots, s_u$) such that $\sigma_v^{(u)}\ne 0$.

We let $\ell\in\{1,\dots, s_u\}$ be the unique integer for which we have
\begin{equation}
\label{eq:in-between}
i_{\ell-1}^{(u)} <v\le i_{\ell}^{(u)},
\end{equation}

Using  condition~(2) regarding the linear independence over $\End(C_u)$ of the distinct points $\alpha_{j,u}$, we must have that the coefficient (seen as an element of $\End(C_u)$) of $\alpha_{\ell,u}$  on the right hand side and respectively, on the left hand side of equation~\eqref{eq:n-in-2} must be equal. This means that there must exist $c, b_1, \dots, b_r \in \End(C_u)$ and polynomials $P_1, \dots, P_{s_u}$ with coefficients in $\End^0(C_j)$ such that 
\begin{equation}
\label{eqn:FrobOrbit}
F_{C_u}^{n\cdot n_1^{(u)}}P_1(n) + \cdots + F_{C_u}^{n\cdot n_{s_u}^{(u)}}P_{s_u}(n) = c + \sum_{i = 1}^t b_iF_{C_u}^{\delta_in_i},    
\end{equation}
for every $n \in S$. In the left hand side of equation~\eqref{eqn:FrobOrbit}, we collect the terms corresponding to the same value $n_i^{(u)}$ (as we vary $i\in\{1,\dots, s_u\}$) and so, we obtain a new equation:
\begin{equation}
\label{eqn:FrobOrbit_2}
F_{C_u}^{n\cdot \gamma_1}R_1(n) + \cdots + F_{C_u}^{n\cdot \gamma_k}R_{k}(n) = c + \sum_{i = 1}^t b_iF_{C_u}^{\delta_in_i},    
\end{equation}
where $\gamma_1,\dots, \gamma_k$ (for some $k\in\N$) are all the distinct $n^{(u)}_i$ (as we vary $i\in\{1,\dots, s_u\}$), while $R_1(n),\dots, R_k(n)$ are polynomials with coefficients in $\End^0(C_u)$.

\begin{claim}
\label{claim:crucial}
There exists $w\in\{1,\dots, k\}$ such that the polynomial $R_w(n)$ is not constant. 
\end{claim}

\begin{proof}[Proof of Claim~\ref{claim:crucial}.]
First of all, since we assumed that the entry $\sigma_v^{(u)}$ in $\vec{\sigma}_u$ is nonzero and $v\ne i^{(u)}_{h-1}+1$ for $h=1,\dots, s_u$, then using our definition of $\ell$ as in~\eqref{eq:in-between}, we get that  the polynomial
\begin{equation}
\label{eq:poly-nonconstant}
P_{\ell}(n)\text{ is nonconstant.}
\end{equation}
Now, using conditions~(1)~and~(2) satisfied by the points $\alpha_{j,u}$, it means that whenever $\alpha_{j,u}=\alpha_{\ell,u}$, we must also have that
\begin{equation}
\label{eq:different n's}
n^{(u)}_{j}\ne n^{(u)}_{\ell}.
\end{equation}
Equation~\eqref{eq:different n's} yields that when we collect terms in equation~\eqref{eqn:FrobOrbit} and derive equation~\eqref{eqn:FrobOrbit_2}, for the unique $w\in\{1,\dots k\}$ for which $\gamma_w=n^{(u)}_{\ell}$, we actually have that $R_w(n)=P_\ell(n)$. Then equation~\eqref{eq:poly-nonconstant} provides the desired conclusion in Claim~\ref{claim:crucial}.
\end{proof}

Now, note that $\End^0(C_u)$ is a vector space over $\Q[F_{C_u}]$. So, considering a basis for the $\Q[F_{C_u}]$-vector space $\End^0(C_u)$ (using the same argument from the proof of Proposition~\ref{prop:unipotentMain4}, as employed before equation \eqref{eq:density form equ}), we may assume without loss of generality that $c, b_1, \dots,b_r$ and the coefficients of the polynomials $R_w$ (for $w=1,\dots,k$, as in equation~\eqref{eqn:FrobOrbit_2}) are all contained in $\Q[F_{C_u}]$ which is a (commutative) field and can be viewed as a subset of $\C$. This contradicts \cite[Theorem~1.2]{GOSS}, which provides a upper bound for all positive integers $n\le N$ for which there exist some $n_i\in\Z$ such that 
$$u_n=\sum_{i=1}^t d_i a^{n_i},$$
where $a,c_1,\dots, c_t\in\C^*$ and $\{u_n\}$ is a linear recurrence sequence whose characteristic roots are not all simple and equal to powers of $a$. Indeed, the upper bound from \cite[Theorem~1.2]{GOSS} is of the form $O\left(\log(N)^t\right)$, while our hypothesis is that the set of $n$ satisfying the equation~\eqref{eqn:FrobOrbit_2} for some $n_1,\dots, n_t\in\N_0$  would have positive density. This concludes our proof of Lemma~\ref{lem:j-k}.
\end{proof}

Therefore, Lemma~\ref{lem:j-k} yields that for any $\sigma$ that kills all the elements in $H$, all the coordinates of $\sigma$ other than  $\sigma_{1}^{(j)}, \sigma_{i_1^{(j)} + 1}^{(j)}, \dots,\sigma_{i_{s_{j}-1}^{(j)} + 1}^{(j)}$ (for $j=1,\dots, r$) must be zero. This implies that $V \cong (\prod_{i = 1}^{r}C_i^{k_i - s_i}) \times Z$, where $Z$ is a subvariety of $\prod_{i=1}^{r} C_i^{s_i}$ containing the elements
\[
(F^{n\cdot n_1^{(1)}}(\alpha_{1,1}), \dots, F^{n\cdot n_{s_1}^{(1)}}(\alpha_{s_1, 1}), \dots,F^{n\cdot n_1^{(r)}}(\alpha_{1,r}), \dots, F^{n\cdot n_{s_r}^{(r)}}(\alpha_{s_r, r})). 
\]
Furthermore, for each $j=1,\dots, r$, we consider the first $d_j$ coordinates in $\Pl^{N_i}$ of the points $F^{n\cdot n_\ell^{(j)}}(\alpha_{\ell,j})$ (for $1\le \ell\le s_j$), i.e., we let:
\begin{equation}
\label{eqn:orbitCoords}
T_{N, \ell, j} := \left(t_{1,\ell,j}^{q^{N}},t_{2,\ell,j}^{q^N},\dots,t_{d_j,\ell,j}^{q^{N}}\right)\in \A^{d_j}(K),
\end{equation}
where we recall the definition of $t_{k,\ell,j}$ from \eqref{eq:t-l-j}. 

Since the dimension of $\prod_{i=1}^{r} C_i^{s_i}$ is equal to $e:=\sum_{i=1}^r s_id_i$, and because we assumed that $Z$ is a proper subvariety of $\prod_{j=1}^r C_j^{s_j}$, then there must exist a nonzero polynomial $\mathcal{Q}$ with coefficients in $K$ that vanishes on 
\begin{equation}
\label{eqn:orbitForm}
\left(T_{n\cdot n_1^{(1)}, 1, 1}, \dots, T_{n\cdot n_{s_r}^{(r)}, s_r, r}\right)\in \A^D(K),    
\end{equation}
for every $n \in S$ (see also the definition of $T_{N,\ell,j}$ from \eqref{eqn:orbitCoords}). So, for each $j=1,\dots, r$ and for each $1\le i\le s_j$, we let $\vec{x}_{i,j}$ be a vector with $d_j$ entries in $K$; then $\mathcal{Q} \in K[\vec{x}_{1,1}, \dots, \vec{x}_{s_r, r}]$ is a nonzero polynomial given by 
\[
\mathcal{Q}(\vec{x}_{1,1}, \dots,\vec{x}_{s_r, r})= \sum_{\vec{v}_{1,1}, \dots, \vec{v}_{s_r, r}}c_{\vec{v}_{1,1},\dots,\vec{v}_{s_r,r}}\cdot \vec{x}_{1,1}^{\vec{v}_{1,1}}\cdot \cdots \cdot \vec{x}_{s_r, r}^{\vec{v}_{s_r, r}}, 
\]
where $c_{\vec{v}_{1,1},\dots,\vec{v}_{s_r,r}} \in K$ and $\vec{v}_{i, j} \in \Z^{d_{j}}$ for every $j=1,\dots,r$ and $i=1,\dots,s_j$. Next we let 
\[
\mathbf{T} = \left\{t_{d, i, j}: 1 \le j \le r, 1 \le i \le s_j, 1 \le d \le d_j \right\},
\]
where the elements of $\mathbf{T}$ are not counted with repetition (i.e., the cardinality of $\mathbf{T}$ may be less than $e$). Then we may assume without loss of generality that $c_{\vec{v}_{1,1},\dots,\vec{v}_{s_r,r}}$ are polynomials in $\Fpbar[\mathbf{T}]$ (because any algebraic relation between the points from~\eqref{eqn:orbitForm} must already occur over $\Fpbar[\mathbf{T}]$). 

Let $D$ be the maximum (total) degree of the polynomials $c_{\vec{v}_{1,1},\dots,\vec{v}_{s_r,r}}$. For any $\tilde{t} \in \mathbf{T}$ we let $\text{deg}_{\tilde{t}}\left(P\right)$ denote the degree of $\tilde{t}$ in $P \in \Fpbar[\mathbf{T}\mathbf{T}]$. 

Now, since the elements in $\mathbf{T}$ are all algebraically independent (according to our choice for $t_{k,i,j}$ satisfying conditions~(1)-(2) from above), then the fact that $\mathcal{Q}$ vanishes at $\left(T_{n\cdot n_1^{(1)}, 1, 1}, \dots, T_{n\cdot n_{s_r}^{(r)}, s_r, r}\right)$, means that for each $n\in S$, there exist distinct vectors $\vec{v}_{1,1},\dots,\vec{v}_{s_r,r}$ and ${\vec{v}'_{1,1},\dots,\vec{v}'_{s_r,r}}$ such that for each $t\in \mathbf{T}$, we have that 
\begin{equation}
\label{eq:same at n}
\deg_{\tilde{t}}\left( c_{\vec{v}_{1,1},\dots, \vec{v}_{s_r,r}}\cdot \prod_{\substack{1\le j\le r\\ 1\le i\le s_j}} T_{n\cdot n_i^{(j)},i,j}^{\vec{v}_{i,j}}\right) = \deg_{\tilde{t}}\left( c_{\vec{v}'_{1,1},\dots, \vec{v}'_{s_r,r}}\cdot \prod_{\substack{1\le j\le r\\ 1\le i\le s_j}} T_{n\cdot n_i^{(j)},i,j}^{\vec{v}'_{i,j}}\right).
\end{equation}
At the expense of replacing $S$ with an infinite subset (actually, even a subset of positive density), we may actually assume that equation~\eqref{eq:same at n} holds for all $n\in S$. 

Next, for each $j=1,\dots, r$ and each $i=1,\dots, s_j$ and for each $\tilde{t}\in\mathbf{T}$, we let $\vec{u}^{(\tilde{t})}_{i,j}\in \Z^{d_j}$ be a vector whose $k$-th entry is either equal to $1$ or to $0$, depending on whether $t_{k,i,j}=\tilde{t}$, or not. Also, we let $\vec{w}_{i,j}:=\vec{v}_{i,j}-\vec{v}'_{i,j}$ for each $j=1,\dots, r$ and each $i=1,\dots, s_r$. Then equation~\eqref{eq:same at n}, along with the fact that the degrees of the polynomials $c_{\vec{v}_{1,1},\dots, \vec{v}_{s_r,r}}$ and $c_{\vec{v}'_{1,1},\dots, \vec{v}'_{s_r,r}}$ are bounded by $D$, we get the following inequality for each $n\in S$ and for each $\tilde{t}\in\mathbf{T}$: 
\begin{equation}
\label{eqn:degIneq}
\left|\sum_{\substack{1\le j\le r\\ 1\le i\le s_j}} q^{n\cdot n_i^{(j)}}\cdot \left(\vec{u}^{(\tilde{t})}_{i,j}\cdot \vec{w}_{i,j}\right)\right|\le D.
\end{equation}
We let $\gamma^{(\tilde{t})}_{i,j}\in\Z$ be the dot product of the vectors $\vec{u}^{(\tilde{t})}_{i,j}\cdot \vec{w}_{i,j}$. So, the inequality~\eqref{eqn:degIneq} yields that 
\begin{equation}
\label{eqn:degIneq_2}
\left|\sum_{\substack{1\le j\le r\\ 1\le i\le s_j}} q^{n\cdot n^{(j)}_i}\cdot \gamma^{(\tilde{t})}_{i,j}\right|\le D.
\end{equation}
Since $(\vec{v}_{1,1},\dots,\vec{v}_{s_r,r}) \ne (\vec{v}'_{1,1},\dots,\vec{v}'_{s_r,r})$ there must exist some $1\le j \le r$ and some $1 \le i \le s_j$ such that $\vec{w}_{i, j} \ne \vec{0}$. So, there exists $1 \le d \le d_j$ such that the $d$-th coordinate of $\vec{w}_{i,j}$ is non-zero. If we let $\tilde{t} = t_{d, i, j}$, then we see that equation \eqref{eqn:degIneq_2} becomes 
\begin{equation}
\label{eqn:degIneq_3}
\left|\sum_{1\le e \le E} \omega_e q^{n\cdot \kappa_e} \right|\le D    
\end{equation}
where $\omega_1, \dots,\omega_E$ are non-zero integers and $\kappa_1, \dots, \kappa_{E}$ are distinct positive integers. Indeed, due to condition (1) we know that $\gamma_{i, j}^{(\tilde{t})}$ and $\gamma_{i', j'}^{(\tilde{t})}$ can be non-zero if and only if $n_{i}^{(j)} \ne n_{i'}^{(j')}$. But, due to the fact that $\kappa_1, \dots, \kappa_{E}$ are distinct positive integers it is clear that the left hand side of \eqref{eqn:degIneq_3} must go to infinity as $n$ approaches infinity which is a contradiction. Therefore, $\OO_{\Phi}(\vec{\alpha})$ is Zariski dense in $G$ which concludes our proof of Proposition~\ref{prop:noUnity}.
\end{proof}

%%%%%%%%%%%%%%%%%%%%%%%%%%%%%%%%%%%%%%%%%%%%%%%%%%%%%%%%%%%%%%%%%%%%%%%%%%%%%%%%
%%%%%%%%%%%%%%%%%%%%%%%%%%%%%%%%%%%%%%%%%%%%%%%%%%%%%%%%%%%%%%%%%%%%%%%%%%%%%%%%

\section{A split case}
\label{sec:split_case}

Before proving the main result of this Section (which is Theorem~\ref{thm:main3}), we start with a technical Lemma regarding simple semiabelian varieties; actually, Lemma~\ref{lemma:pEqns} could be formulated solely  using subrings of skew fields which are integral over their center, but since its natural setting is the case of endomorphisms of simple semiabelian varieties, we prefer to formulate our result in this context.

\begin{lemma}
\label{lemma:pEqns}
Let $D$ be some simple semiabelian variety defined over a finite field $\Fq$, let $F$ be the Frobenius endomorphism of $D$ corresponding to the field $\Fq$, let $N,r\in\N$, let $\vec{v}$ be an $N$-by-$1$ vector with entries in $\End(D)$,  let $\delta_1, \dots, \delta_r \in \N$, and let $A, B_1, \dots, B_r, C$ be $N$-by-$N$ matrices with entries in $\End(D)$ such that $A$ is invertible and moreover, it is an $NFP$ matrix (see Definition~\ref{def:NFP}). If there exists an infinite subset $S \subseteq \N$ with the property that for each $n\in S$, there exist $n_1,\dots, n_r\in\N_0$ such that  
\begin{align}
A^n\vec{v} = C\vec{v} + \sum_{i = 1}^r F^{n_i\delta_i}B_i\vec{v}, \label{eqn:pEqn}
\end{align}
then $\vec{v}$ must be the zero vector. Similarly, if there exists an infinite subset $S \subseteq \N$ with the property that for each $n\in S$, there exist $n_1,\dots, n_r\in\N_0$ such that  
\begin{align}
\vec{v}^T A^n = \vec{v}^TC + \sum_{i = 1}^r F^{n_i\delta_i}\vec{v}^T B_i, \label{eqn:pEqn2}
\end{align}
then $\vec{v}$ must be the zero vector. 
\end{lemma}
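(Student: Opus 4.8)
The plan is to argue by contradiction. Suppose $\vec v\neq \vec 0$; I will derive a contradiction, which proves the first assertion, and then the second assertion \eqref{eqn:pEqn2} follows by the same argument with column vectors and left multiplication replaced by row vectors and right multiplication by $A$ (the NFP hypothesis being used in the same way, since it refers only to the minimal polynomial of $A$ over the central subfield $\Q(F)$). So I only discuss \eqref{eqn:pEqn}.

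\emph{Step 1: reduce to an identity in $M_{Nm}(\C)$ in which the Frobenius becomes a scalar.} By Fact~\ref{fact:Frobenius} the image of $F$ lies in the centre $k_D$ of the division algebra $L_D:=\End^0(D)$, and $\Q(F)$ is a number field. Fix an embedding $k_D\hookrightarrow\C$ and use that the central simple $k_D$-algebra $L_D$ splits over $\C$ to get a ring embedding $\iota\colon L_D\hookrightarrow M_m(\C)$ (injective since $L_D$ is simple) with $\iota(F)=\phi\cdot I_m$ for some $\phi\in\C^{\ast}$; note $\phi\neq 0$ because $F$, being an isogeny, is a unit of $L_D$. Applying $\iota$ entrywise embeds $M_N(\End(D))$ into $M_{Nm}(\C)$ and turns \eqref{eqn:pEqn} into
\begin{equation}
\label{eq:pEqnC}
\tilde A^{\,n}\tilde v=\tilde C\tilde v+\sum_{i=1}^{r}\phi^{\delta_i n_i}\bigl(\tilde B_i\tilde v\bigr)\qquad\text{for each }n\in S,
\end{equation}
with $\tilde A,\tilde B_i,\tilde C\in M_{Nm}(\C)$, $\tilde v\in\C^{Nm}\setminus\{\vec 0\}$ and $\tilde A$ invertible. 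Since the minimal polynomial of $A$ over $\Q(F)$ annihilates $\tilde A$, every eigenvalue of $\tilde A$ is the $\iota$-image of a root of that polynomial, so Definition~\ref{def:NFP} guarantees that no eigenvalue of $\tilde A$ is a root of unity, and none is a power of $\phi$.

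\emph{Step 2: extract a scalar linear-recurrence identity and invoke \cite{GOSS}.} Pick a $\C$-linear functional $\ell$ with $\ell(\tilde v)\neq 0$. Then $w_n:=\ell(\tilde A^{\,n}\tilde v)$ is a linear recurrence sequence, not identically zero (since $w_0=\ell(\tilde v)\neq 0$), whose characteristic roots lie among the eigenvalues of $\tilde A$ on the cyclic subspace generated by $\tilde v$; by Step~1 none of them is a power of $\phi$, so $\{w_n\}$ is \emph{not} of the degenerate type all of whose characteristic roots are simple and equal to powers of $\phi$. On the other hand \eqref{eq:pEqnC} gives $w_n=c_0+\sum_{i=1}^{r}c_i(\phi^{\delta_i})^{n_i}$ for $n\in S$, where $c_0:=\ell(\tilde C\tilde v)$ and $c_i:=\ell(\tilde B_i\tilde v)$ are fixed complex numbers (delete those indices $i$ with $c_i=0$). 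This is precisely the shape handled by \cite[Theorem~1.2]{GOSS} with base $a=\phi$, whose estimate — combined with the Skolem--Mahler--Lech reduction already used in the proofs of Propositions~\ref{prop:unipotentMain4} and \ref{prop:noUnity}, which turns ``infinitely many solutions'' into a positive density of solutions along an arithmetic progression — contradicts $S$ being infinite. Hence $\vec v=\vec 0$.

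\emph{Main obstacle.} The delicate point is the interface with \cite[Theorem~1.2]{GOSS}: one must verify that the reduced recurrence $\{w_n\}$ genuinely avoids the excluded degenerate family — which is exactly where invertibility of $A$ together with the NFP hypothesis enter — and one must bridge the gap between ``$S$ infinite'' and the density-type conclusion that the recurrence estimate provides. A second, more structural, point is the non-commutativity of $\End(D)$: the passage to $\C$ only linearizes the Frobenius powers $F^{n_i\delta_i}$ into honest scalars $\phi^{\delta_i n_i}$ because $\iota$ can be chosen to send the central element $F$ to a scalar matrix (this uses Fact~\ref{fact:Frobenius} and the splitting of central simple algebras); once \eqref{eq:pEqnC} is in hand, the argument proceeds as in the commutative setting.
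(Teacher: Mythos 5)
Your structural ideas are close to the paper's but the final invocation is wrong in a way that matters. The paper proves Lemma~\ref{lemma:pEqns} by staying inside $\Q[F_D]$: it expands $A^n = \sum_{\ell=0}^{L-1} a_n^{(\ell)}A^\ell$ where the $a_n^{(\ell)}$ are linear recurrences over $\Q[F_D]$ with characteristic roots multiplicatively independent of $F_D$, chooses a $\Q[F_D]$-basis for the vector space spanned by the coordinates of $A^\ell\vec v$, $C\vec v$, $B_i\vec v$, extracts a nontrivial scalar identity of the form $\sum_\ell d_\ell a_n^{(\ell)} = \sum_j c_j F_D^{n_j}$, and then invokes \emph{Laurent's theorem} \cite{Laurent} to get \emph{finiteness} of solutions $n$ in the non-degenerate case, which contradicts $S$ being infinite. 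Your ``split over $\C$'' route (embedding $L_D\hookrightarrow M_m(\C)$ with $\iota(F)=\phi I_m$, then applying a linear functional) is a genuinely different and legitimate way to linearize the non-commutativity, though note that $\iota$ applied entrywise to $\vec v$ produces an $Nm\times m$ block matrix, not a vector in $\C^{Nm}$; you must then multiply on the right by a suitable standard basis vector $e_k$ to land in $\C^{Nm}$ and preserve nonvanishing. This is a minor bookkeeping fix.

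The substantive gap is in Step~2. You invoke \cite[Theorem~1.2]{GOSS}, which (as the paper itself explains in the proof of Proposition~\ref{prop:noUnity}) gives a sparsity bound of order $O(\log(N)^t)$ for the number of solutions $n\le N$. That bound contradicts \emph{positive density} but it is perfectly consistent with $S$ being infinite, which is all the lemma assumes. Your attempted bridge (``the Skolem--Mahler--Lech reduction ... turns `infinitely many solutions' into a positive density'') is not a real argument here: Skolem--Mahler--Lech describes the vanishing set of a single linear recurrence, but the equation $w_n = c_0 + \sum_i c_i\phi^{\delta_i n_i}$ has the free parameters $n_1,\dots,n_r$ varying with $n$, so it is not the zero set of a linear recurrence, and no arithmetic-progression structure is available to upgrade ``infinite'' to ``positive density.'' The paper is careful about exactly this distinction: Propositions~\ref{prop:unipotentMain4} and \ref{prop:noUnity} have a positive-density hypothesis and use the GOSS sparsity results, whereas Lemma~\ref{lemma:pEqns} only has infiniteness, and so the paper reaches for Laurent's theorem, which yields genuine finiteness of solutions to the resulting exponential equation. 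To repair your proof, replace the appeal to \cite[Theorem~1.2]{GOSS} (and the Skolem--Mahler--Lech discussion) with Laurent's theorem \cite{Laurent}, using the multiplicative independence of the eigenvalues of $\tilde A$ from $\phi$ (which is exactly what the NFP hypothesis gives after splitting) as the non-degeneracy condition.
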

\begin{remark}
\label{rem:pEqns}
Note that equations \eqref{eqn:pEqn} and \eqref{eqn:pEqn2} are not equivalent since if $A$ and $B$ are two matrices over a non-commutative ring, then $(AB)^T$ is not necessarily equal to $B^TA^T$. Having said that, a strategy that proves the first part of Lemma \ref{lemma:pEqns}, also proves the second part.
\end{remark}
\begin{proof}
We let $F_D$ be the image of the Frobenius in the endomorphism ring of $D$; we embed $\Q[F_D]$ into $\C$.

As noted in Remark \ref{rem:pEqns}, the proof for the two parts is similar, so we will only prove the first part. Suppose that $v$ is non-zero and there is an infinite subset $S \subseteq \N$ with the property that for each $n \in S$, there exists $n_1, \dots,n_r \in \N_0$ such that equation \eqref{eqn:pEqn} holds. Letting $$P(\lambda) = \lambda^{L} + a_{L-1}\lambda^{L-1} + \cdots + a_1\lambda + a_0$$
be the minimal polynomial of $A$ over $\Q[F_D]$ we see that for every $n$
\begin{equation}
\label{eqn:linearRecurrences}
A^n = \sum_{\ell = 0}^{L-1} a^{(\ell)}_{n}\left(A^{\ell}\right),   
\end{equation}
where for every $\ell = 0,\dots,L-1$, the sequence  $\left\{a^{(\ell)}_{n}\right\}_{n\in\N}$ is a linear recurrence with elements in $\Q[F_D]$ whose characteristic polynomial has roots that are all multiplicatively independent with respect to $F_D$ (since the roots of the polynomial $P$ are all multiplicatively independent with respect to $F_D$).  Therefore, if we let $\vec{u}_\ell := A^\ell\vec{v}$ for every $\ell = 0,\dots, L-1$, then we must have 
\begin{equation}
    \sum_{\ell = 0}^{L-1} a^{(\ell)}_{n}\vec{u}_\ell = C\vec{v} + \sum_{i = 1}^r F_D^{n_i\delta_i}B_i\vec{v}  
    \label{eqn:F-eqn3}
\end{equation}
Now consider the finitely generated vector space over $\Q[F_D]$ generated by the coordinates of $\vec{u}_0, \dots, \vec{u}_{L-1},C\vec{v},B_1\vec{v},\dots,B_r\vec{v}$. Let $\lambda_1, \dots, \lambda_s$ be a basis for this vector space. Then, using equations \eqref{eqn:F-eqn3}, for every $1 \le j \le s$ we get $N$ equations of the form
\begin{equation}
\label{eqn:linearRecurrences2}
    \sum_{\ell=0}^{L-1} d_\ell a^{(\ell)}_n = \sum_{j=1}^k c_{j} F_D^{n_j},
\end{equation}
where $d_{\ell}$'s and $c_{j}$'s are all inside $\Q[F_D]$. It is clear that for some $1 \le j \le s$, one of its corresponding $N$ equations (which are of the form \eqref{eqn:linearRecurrences2}) must be non-trivial,  i.e. the left hand side of equation~\eqref{eqn:linearRecurrences2} is not identically equal to zero. This contradicts Laurent's theorem \cite{Laurent} as the roots of the characteristic polynomials of $a_n^{(\ell)}$ are all multiplicatively independent with respect to $F_D$ and so, a nontrivial equation of the form~\eqref{eqn:linearRecurrences2} cannot be satisfied by infinitely many positive integers $n$. This concludes our proof of Lemma~\ref{lemma:pEqns}.
\end{proof}

The following result is the last technical ingredient that we require in order to derive Theorem~\ref{thm:main4 0 2}. In particular, Theorem~\ref{thm:main3} is obtained from Proposition~\ref{prop:noUnity} in a somewhat similar fashion as Proposition~\ref{prop:split} was deduced from Proposition~\ref{prop:unipotentMain4}.

\begin{theorem}
\label{thm:main3}
Let $K$ be an algebraically closed field of positive transcendence degree over $\Fpbar$, let $G = G_1 \times G_2$ be a split semiabelian variety where 
\begin{equation}
G_1 = \prod_{i = 1}^{r} C_i^{k_i}, \quad G_2 = \prod_{i = 1}^{r}C_i^{k'_i},
\end{equation}
and $C_1, \dots, C_r$ are non-isogenous simple semiabelian varieties defined over some finite subfield $\Fq\subset K$. (Note that we are allowing $k_1, \dots, k_r, k'_1, \dots, k'_r$ to be equal to zero in which case $C_i^0$ represents the trivial group.) Suppose we have the next almost commutative diagram 
\begin{equation}
\label{eq:almost_diagram}
\begin{tikzcd}
G' \arrow[r, "\Psi"] \arrow[d, "g"'] & G' \arrow[d, "g"] \\
G_1 \times G_2 \arrow[r, "{(\Phi_1, \Phi_2)}"] & G_1 \times G_2,
\end{tikzcd}
\end{equation}
where $G'$ is a split semiabelian variety, $\Psi$ is a group endomorphism of $G'$ and $g: G \lra G'$ is an isogeny. Moreover, $\Phi_1$ is a dominant group endomorphism of $G_1$ corresponding to matrices $A_1, \dots, A_r$ where $A_j \in M_{k_j, k_j}(\End(C_j))$ and each $A_j$ is of the form 
\begin{equation}
\label{eqn:jordanForm}
J_{F^{n_{1}^{(j)}},i_1^{(j)}}\oplus J_{F^{n_{2}^{(j)}},i_2^{(j)}-i_1^{(j)}}\oplus \cdots \oplus J_{F^{n_{s_j}^{(j)}},i_{s_j}^{(j)}-i_{s_j-1}^{(j)}}.    
\end{equation}
Also, we assume that $\Phi_2$ is a finite-to-finite map from $G_2$ to $G_2$ corresponding to matrices $A'_1, \dots, A'_{r}$ where $A'_i \in M_{k'_i, k'_i}\left(\frac{1}{m}\End(C_i)\right)$ for some $m \in \N$. Assume the following conditions are met:
\begin{itemize}
\item[(1)] The matrices $A'_1, \dots, A'_{r}$ are all NFP matrices.   
\item[(2)] $n_i^{(j)} \ge 1$ for every $1 \le j \le r$ and $1 \le i \le s_j$. 
\item[(3)] There does not exist $1 \le j_1 \le \cdots \le j_{\ell} \le r$, and $i_1, \dots, i_{\ell}$ satisfying $1 \le i_{k} \le s_{j_k}$ for every $1 \le k \le \ell$, such that the pairs $(i_k, j_k)$ are distinct and 
\[
n_{i_1}^{(j_1)} = \dots = n_{i_\ell}^{(j_\ell)},
\]
and 
\[
\dim(C_{j_1}) + \cdots + \dim(C_{j_{\ell}}) \ge \trdeg_{\Fpbar}K + 1.
\]
\end{itemize}
Then, given any finitely generated submodules $\Gamma_1, \dots, \Gamma_r$ where $\Gamma_i \subset C_i(K)$ is an $\End(C_i)$-submodule for every $1 \le i \le r$, there exist $\vec{\alpha} = (\vec{\alpha}_1, \dots, \vec{\alpha}_r)\in G_1(K)$ and $\vec{\beta} = (\vec{\beta}_1, \dots, \vec{\beta}_r)\in G_2(K)$ where $\vec{\alpha}_i \in C_i^{k_i}$ and $\vec{\beta}_i \in C_i^{k'_i}$ for every $1 \le i \le r$, such that
\begin{itemize}
\item[(i)] for each $i=1,\dots, r$, the $\End(C_i)$-module  spanned by $$\alpha_i^{(1)},\dots,\alpha_i^{(k_i)}, \beta_i^{(1)},\dots,\beta_i^{(k'_i)}$$  (which are the coordinates of $\vec{\alpha}_i$ and $\vec{\beta}_i$) has trivial intersection with $\Gamma_i$; and
\item[(ii)] for any subset $S$ of positive integers with positive density and any orbit $\{x_n\}_{n\ge 0}$ of $(\vec{\alpha}, \vec{\beta})$ under $\Phi := (\Phi_1,\Phi_2)$,  we have that the subset $\{x_n: n \in S\}$ is Zariski dense in $G$.  
\end{itemize}
\end{theorem}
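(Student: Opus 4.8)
The plan is to deduce Theorem~\ref{thm:main3} from Proposition~\ref{prop:noUnity} by exactly the mechanism that produced Proposition~\ref{prop:split} from Proposition~\ref{prop:unipotentMain4}, with Lemma~\ref{lemma:pEqns} now playing the role that \cite[Theorem~1.1]{GOSS2} played there. First I would use the almost commutative diagram~\eqref{eq:almost_diagram} together with the uniform-denominator phenomenon (Remark~\ref{rem:same_denominator}) to replace the correspondence $\Phi^n$ by the genuine group endomorphism $[\ell_2]\circ\Phi^n=g\circ\Psi^n\circ\hat g$ of $G=\prod_{i=1}^r C_i^{k_i+k'_i}$, where $\hat g$ is an isogeny dual to $g$; since the $C_i$ are pairwise non-isogenous this endomorphism is block diagonal in $\bigoplus_i M_{k_i+k'_i}(\End(C_i))$, its $i$-th block being $\ell_2\cdot\mathrm{diag}(A_i^n,(A'_i)^n)$, where $\ell_2(A'_i)^n$ has entries in $\End(C_i)$ for every $n$. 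I would also record that, writing $d:=\trdeg_{\Fpbar}K$, hypothesis~(2) is the standing hypothesis of Proposition~\ref{prop:noUnity} and hypothesis~(3) is exactly the negation of its alternative~(B); hence Proposition~\ref{prop:noUnity} yields its alternative~(A) for $(G_1,\Phi_1)$.

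Next I would choose the points. Pick $\vec\beta=(\vec\beta_1,\dots,\vec\beta_r)\in G_2(K)$ whose coordinates $\beta_i^{(1)},\dots,\beta_i^{(k'_i)}$ are \emph{generic}: algebraically independent over the subfield of $K$ generated by $\Fpbar$, a transcendence basis for $K/\Fpbar$, and finite generating sets of $\Gamma_1,\dots,\Gamma_r$; then these coordinates are linearly independent over $\End(C_i)$ and span a module meeting $\Gamma_i$ trivially. Then I would apply Proposition~\ref{prop:noUnity}, alternative~(A), to $(G_1,\Phi_1)$ with each $\Gamma_i$ enlarged by the $\End(C_i)$-span of $\vec\beta_i$, obtaining $\vec\alpha=(\vec\alpha_1,\dots,\vec\alpha_r)\in G_1(K)$ such that: the $\End(C_i)$-span of the coordinates of $\vec\alpha_i$ and $\vec\beta_i$ together meets $\Gamma_i$ trivially (this gives conclusion~(i)) and these coordinates are jointly linearly independent over $\End(C_i)$ (so no nonzero endomorphism of $G$ vanishes at $(\vec\alpha,\vec\beta)$, nor at a fixed $\ell_2$-division point of it); and, crucially, for every positive density subset $S$ the set $\{\Phi_1^n(\vec\alpha):n\in S\}$ is Zariski dense in $G_1$.

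For conclusion~(ii), fix a positive density $S$ and an orbit $\{x_n\}$ of $(\vec\alpha,\vec\beta)$ under $\Phi=(\Phi_1,\Phi_2)$ (so the $G_1$-coordinate of $x_n$ is $\Phi_1^n(\vec\alpha)$). Choosing $\vec x_0\in G$ with $[\ell_2]\vec x_0=(\vec\alpha,\vec\beta)$ and using $[\ell_2]\circ\Phi^n=g\circ\Psi^n\circ\hat g$, one gets $x_n-g(\Psi^n(\hat g(\vec x_0)))\in G[\ell_2]$; since $[\ell_2]$ is a finite map it suffices to prove that $\OO:=\{g(\Psi^n(\hat g(\vec x_0))):n\in S\}$ is Zariski dense in $G$. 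Assume not, and let $V\subsetneq G$ be its Zariski closure. As $\Psi$ and the Frobenius $F$ are integral over $\Z$ with $F$ central, $\OO$ lies in the finitely generated $\Z[F]$-module generated by the $g(\Psi^\ell(\hat g(\vec x_0)))$ with $\ell<L$, where $L$ is the degree of the minimal polynomial of $\Psi$ over $\Z$; by Theorem~\ref{Moosa-Scanlon theorem} and Remark~\ref{rem:M-S} its intersection with $V$ is a finite union of $F$-sets, so, shrinking $S$ to a positive density $S'$, there is an $F$-set $U=\vec\lambda+\Sigma(\vec\eta_1,\dots,\vec\eta_s;\delta_1,\dots,\delta_s)+H$ containing $g(\Psi^n(\hat g(\vec x_0)))$ for all $n\in S'$, with $m\vec\lambda,m\vec\eta_i$ in that module for some $m\in\N$. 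Since $V$ is proper, $\overline H$ is a proper algebraic subgroup of $G$, so $H\subseteq\ker\sigma$ for some nonzero $\sigma\in\End(G)$; using that no nonzero endomorphism vanishes at $\vec x_0$, I would upgrade this to the endomorphism identity $\sigma\circ[m]\circ(g\Psi^n\hat g)=\sigma\circ\big(\vec u_0\cdot\vec\rho+\sum_j F^{n_j\delta_j}(\vec u_j\cdot\vec\rho)\big)$ for $n\in S'$, exactly as in the derivation of~\eqref{eq:equalEndomorphisms1}. Passing to the $i$-th block, let $V_i$ be the block of columns of $\sigma^{(i)}$ indexed by the $G_2$-coordinates; using that the $i$-th block of $g\Psi^n\hat g$ is $\ell_2\,\mathrm{diag}(A_i^n,(A'_i)^n)$, the identity restricts to $m\ell_2\,V_i(A'_i)^n=V_iZ_{0,i}+\sum_j F_{C_i}^{n_j\delta_j}V_iZ_{j,i}$ with $Z_{j,i}\in M(\End(C_i))$; if $V_i\ne0$, a nonzero row of $V_i$ together with $A=A'_i$ (invertible since the finite-to-finite map $\Phi_2$ is dominant, NFP by~(1)) satisfies equation~\eqref{eqn:pEqn2} of Lemma~\ref{lemma:pEqns} over $\End^0(C_i)$, forcing that row to vanish --- a contradiction. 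Hence $V_i=0$ for every $i$, so $\sigma$ annihilates the $G_2$-factor, i.e.\ $\overline H\supseteq G_2$; therefore the Zariski closure $W$ of $U$ has the form $Z\times G_2$ with $Z\subseteq G_1$, and $Z$ contains $\{\Phi_1^n(\vec\alpha):n\in S'\}$, which is Zariski dense in $G_1$ by the choice of $\vec\alpha$. Thus $Z=G_1$, $W=G$, contradicting $W\subseteq V\subsetneq G$; so $\OO$, and hence $\{x_n:n\in S\}$, is Zariski dense in $G$.

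The main obstacle is that last step --- showing that every endomorphism $\sigma$ killing the stabilizer group $H$ must annihilate the $G_2$-factor. This is where the NFP hypothesis~(1) is indispensable, through Lemma~\ref{lemma:pEqns}, and the delicate part is the bookkeeping that brings the relevant block equations into the precise shape of~\eqref{eqn:pEqn2}: clearing the uniform denominator $\ell_2$, isolating the $G_2$-columns of $\sigma^{(i)}$, and noting that the argument proving Lemma~\ref{lemma:pEqns} goes through verbatim with $\End(C_i)$ replaced by $\End^0(C_i)$. Everything else --- matching the hypotheses of Proposition~\ref{prop:noUnity}, securing the joint genericity of $\vec\alpha$ and $\vec\beta$ while keeping the trivial intersection with each $\Gamma_i$, and the finite-map descent relating $\OO$ to the genuine $\Phi$-orbit --- is routine and follows the templates of Propositions~\ref{prop:unipotentMain4} and~\ref{prop:split}.
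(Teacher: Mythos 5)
Your proposal follows essentially the same route as the paper's own proof: choose $\vec\alpha$ via Proposition~\ref{prop:noUnity}(A) (matching hypotheses~(2)--(3) of Theorem~\ref{thm:main3} to the standing hypothesis and to the negation of alternative~(B) of that Proposition) and $\vec\beta$ generically so that the coordinate modules meet each $\Gamma_i$ trivially; pass to the $\Z[F]$-module generated by the $g\circ\Psi^\ell\circ\hat g$-images of a division point of $(\vec\alpha,\vec\beta)$; invoke Theorem~\ref{Moosa-Scanlon theorem} to land a positive-density subsequence in one $F$-set; use Lemma~\ref{lemma:pEqns} (indeed in the form~\eqref{eqn:pEqn2}, as the matrix of $\sigma$ acts on the left) together with the NFP hypothesis to force the $G_2$-columns of any $\sigma$ annihilating $H$ to vanish; conclude $\overline H\supseteq\{1\}\times G_2$, so the closure $W$ has the form $Z\times G_2$ with $Z$ dense in $G_1$ by Proposition~\ref{prop:noUnity}(A)(ii); and finally descend from the genuine-endomorphism orbit to an arbitrary orbit of the correspondence via the finite map $[\ell_2]$. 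The only cosmetic difference is that you choose $\vec\beta$ first and then $\vec\alpha$ with respect to the enlarged submodules (the paper does the reverse), and you make explicit that one may either clear denominators or work over $\End^0(C_i)$ in applying Lemma~\ref{lemma:pEqns}; neither affects the argument.
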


\begin{proof}
Let $\Gamma_1, \dots,\Gamma_r$ be finitely generated submodules where $\Gamma_i \subset C_i(K)$ is an $\End(C_i)$-submodule for every $1 \le i \le r$. We pick a starting point $\vec{x} := (\vec{\alpha}, \vec{\beta})$ for the action of $(\Phi_1, \Phi_2)$ on
$G_1 \times G_2$ of the following form:  
\begin{itemize}
\item We pick $\vec{\alpha}_i \in C_i^{k_i}(K)$ for every $1 \le i \le r$ such that $\left(\vec{\alpha}_i\right)_{1 \le i \le r}$ satisfies both conditions~(i)-(ii) from the  conclusion of Proposition~\ref{prop:noUnity} with respect to the finitely generated subgroups $\Gamma_1, \dots, \Gamma_r$ (note that due to conditions~(2)-(3) of theorem \ref{thm:main3} along with the fact that the eigenvalues of the Jordan blocks in the Jordan canonical form of $A_i$ are of the form \eqref{eqn:jordanForm}, statement (A) in Proposition~\ref{prop:noUnity} must hold); and 
\item For each $i=1,\dots, r$, $\vec{\beta}_i$ has its $k'_i$ coordinates linearly independent among themselves over $\End(C_i)$ and also, the $\End(C_i)$-submodule of $C_i(K)$ generated by the coordinates of $\vec{\beta}_i$ has trivial intersection with the $\End(C_i)$-submodule spanned by $\Gamma_i$ and the  
coordinates of $\vec{\alpha}_i$.
\item We let $\vec{\alpha} := \left(\vec{\alpha}_1, \dots, \vec{\alpha}_r\right)$ and $\vec{\beta}:= \left(\vec{\beta}_1, \dots, \vec{\beta}_r\right)$.
\end{itemize}
At the expense of replacing the integer $m$ from Theorem~\ref{thm:main3} (for which $A'_i\in M_{k'_i, k'_i}\left(\frac{1}{m}\End(C_i)\right)$ for each $i=1,\dots, r'$) by a multiple of it, then we can find a group homomorphism $\hat{g}: G \lra G'$ such that 
\begin{equation}
\label{eq:gg}
\hat{g} \circ g = [m]_{G'}, \quad g \circ \hat{g} = [m]_{G}.
\end{equation}
In particular, we also have
\begin{equation}
\label{eq:almost_diagram_2}
[m]_G\circ \Phi= g\circ \Psi\circ \hat{g}.
\end{equation}
We let 
\[
\OO = \left\{\left(g \circ \Psi^n \circ \hat{g}\right)\left(\vec{x}_0\right): n \ge 0 \right\}, 
\]
where $\vec{x}_0 \in G_1 \times G_2$ is chosen such that $[m]_G(\vec{x}_0) = \vec{x}$. If we let $P_{\Psi}(x) = x^L + b_{L-1}x^{L-1} + \cdots + b_0$ be the minimal polynomial of $\Psi$ over $\Z$ then 
\begin{align}
\Lambda := \left\{\left( \sum_{i=0}^{L-1} a_i\left(g \circ \Psi^i \circ \hat{g}\right)\left(\vec{x}_0\right)\right): a_0,\dots,a_{L-1} \in \Z[F]\right\} \label{eqn:GammaForm} 
\end{align}
is a finitely generated $\Z[F]$-module (since the Frobenius $F:G\lra G$ is integral over $\Z$ in $\End(G)$); furthermore, all the points in $\OO$ are contained in $\Lambda$. 

We let $S\subseteq\N_0$ be an arbitrary set with positive density; we will prove that the set 
$$\OO_S:=\left\{\left(g \circ \Psi^n \circ \hat{g}\right)\left(\vec{x}_0\right)\colon n\in S\right\}$$  
must be Zariski dense in $G$. If $\OO_S$ is not Zariski dense, then we let $V\subset G$ be its Zariski closure.  
Using Theorem~\ref{Moosa-Scanlon theorem}, there must exist a set of the form \eqref{eqn:S-sets} containing infinitely many elements of $\OO_S$ (see also Section~\ref{sec:M-S}). So, at the expense of replacing $S$ by a smaller subset that still has positive density (and thus replacing the set $\OO_S$ with its corresponding infinite subset), then there exists a set 
\begin{align}
\mathcal{F}:=\vec{\lambda} + \Sigma(\vec{\eta}_1,\dots,\vec{\eta}_s; \delta_1,\dots,\delta_s) + H,  \label{eqn:S-sets 5}  
\end{align}
containing $\OO_S$. Now, regarding the set $\mathcal{F}$ (see also Remark~\ref{rem:M-S}), there exists a positive integer $m_2$ such that   
\begin{equation}
\label{eq:div-hull 5}
m_2\cdot \vec{\lambda},m_2\cdot \vec{\eta}_1,\dots,m_2\cdot \vec{\eta}_s\in \Lambda,
\end{equation} 
while the $\delta_j$'s are positive integers and $H$ is a subgroup of $\Lambda$. 
Since we assumed that $\OO_S$ is not Zariski dense in $G$, then $V$ is a proper subvariety of $G$ and in particular, the Zariski closure of $H$ must be a proper algebraic subgroup of $G$; so, there must exist an endomorphsim $\sigma:G_1 \times G_2\lra G_1\times G_2$ such that $\sigma(\vec{\epsilon}) = 0$ for every $\vec{\epsilon} \in H$. 

If we let 
\[
\vec{\rho} := \left((g \circ \hat{g}), \left(g \circ \Psi \circ \hat{g}\right), \dots, \left(g \circ \Psi^{L-1} \circ \hat{g}\right)\right),
\]
and
\[
\vec{\rho}(\vec{x}) := \left((g \circ \hat{g})(\vec{x}), \left(g \circ \Psi \circ \hat{g}\right)(\vec{x}), \dots, \left(g \circ \Psi^{L-1} \circ \hat{g}\right)(\vec{x})\right),
\]
for every $x \in G$, then using equation \eqref{eq:div-hull 5}, for every $i = 1,\dots,s$ there exist vectors $\vec{u}_0, \vec{u}_1, \dots, \vec{u}_s \in \Z[F]^{L}$ such that 
\begin{align}
&m_2\cdot\vec{\eta}_i = \vec{u_i} \cdot \rho(\vec{x}_0) \text{ for each $i=1,\dots, s$ and  } m_2\cdot \vec{\lambda} = \vec{u_0} \cdot \vec{\rho}\left(\vec{x}_0\right).  \notag    
\end{align}  
So, for each $n \in S$, using that $\left(g \circ \Psi^n \circ \hat{g}\right)\left(\vec{x}_0\right) \in \mathcal{F}$, for every $j=1\dots,r$ we must have some some non-negative integers $n_i$ (for $i=1,\dots, r$, where the $n_i$'s depend on $n$) such that 
\begin{align}
\sigma\left(m_2\left(g \circ \Psi^n \circ \hat{g}\right)\left(\vec{x}_0\right)\right) &= \sigma\left(\left( \vec{u}_0\cdot \vec{\rho} + \sum_{i=1}^s F^{n_i\delta_i}(\vec{u}_i\cdot\vec{\rho})\right)\left(\vec{x}_0\right)\right).
\label{eq:last equality}
\end{align}
Due to the way the coordinates of $\vec{x}$ are chosen, we know that there does not exist a nontrivial endomorphism of $G_1 \times G_2$ that vanishes at $\vec{x}$. Considering the fact that $[m]_{G}(\vec{x}_0) = \vec{x}$, we also deduce that there does not exist any non-trivial endomorphism of $G_1 \times G_2$ that vanishes at $\vec{x}_0$. Therefore, we must have   the following equality taken place inside $\End(G)$: 
\begin{equation}
\label{eq:equalEndomorphisms}
\sigma\left(m_2\left(g \circ \Psi^n \circ \hat{g}\right)\right) = \sigma\left(\left( \vec{u}_0\cdot \vec{\rho} + \sum_{i=1}^s F^{n_i\delta_i}(\vec{u}_i\cdot\vec{\rho})\right)\right)
\end{equation}
for every $n \in S$. Let the group endomorphism $\sigma$ correspond to some matrix $P$ whose rows are of the form $\vec{v}_1 \oplus \vec{v}_2$ where $\vec{v}_1 \in \prod_{i=1}^r \End(C_i)^{k_i}$ and $\vec{v}_2 \in \prod_{i=1}^r \End(C_i)^{k'_i}$. Using \eqref{eq:gg}~and~\eqref{eq:almost_diagram_2}, we know that $m_2(\left(g \circ \Psi^n \circ \hat{g}\right))$ corresponds to matrices which are similar to $m_2m\cdot A_1^n, \dots, m_2m\cdot A_r^n, m_2m\cdot (A'_1)^n, \dots, m_2m\cdot (A'_r)^n$. Now, since each $A'_i$ is an NFP matrix, Lemma~\ref{lemma:pEqns} and equation \eqref{eq:equalEndomorphisms} yield that for every row $\vec{v}_1 \oplus \vec{v}_2$ of $P$ we must have $\vec{v}_2 = 0$. This clearly holds for every $\sigma \in \End(G_1 \times G_2)$ that kills the elements of $H$. Therefore, $\overline{H}$ is an algebraic group of the form $\overline{H}_1 \oplus G_2$ for some algebraic subgroup $\overline{H}_1 \subseteq G_1$.

So, the Zariski closure $W$ of the set $\mathcal{F}$ (which is itself contained in the Zariski closure of the set $\OO_S$) must be of the form $W_1 \oplus G_2$ for some subvariety $W_1\subseteq G_1$ because $ \vec{1}_{G_1} \oplus G_2$ is contained in the stabilizer of $W$. However, $W_1$ contains all the points   $\Phi_1^n(\vec{\alpha})$ for $n \in S$. Then using the fact that $S$ is an infinite subset of $\N_0$ along with Proposition~\ref{prop:noUnity}, we conclude that $W_1$ must be the entire $G_1$. So, actually $W$ must be the entire $G_1\oplus G_2=G$, which proves that for any $S \subset \N_0$ with positive density, the corresponding set $\OO_S$ is Zariski dense in $G$. 

Now, take any orbit $\{y_n\}_{n\ge 0}\subset G(K)$ of $\vec{x}$. Arguing exactly as in the proof of Proposition \ref{prop:split} we see that since $\OO_S$ is Zariski dense in $G$ then $\{y_n: n \in S\}$ must also be Zariski dense in $G$.

This concludes our proof of Theorem~\ref{thm:main3}.
\end{proof}

%%%%%%%%%%%%%%%%%%%%%%%%%%%%%%%%%%%%%%%%%%%%%%%%%%%%%%%%%%%%%%%%%%%%%%%%%%%%%%%
%%%%%%%%%%%%%%%%%%%%%%%%%%%%%%%%%%%%%%%%%%%%%%%%%%%%%%%%%%%%%%%%%%%%%%%%%%%%%%%

\section{Conclusion for our proof of Theorem~\ref{thm:main4 0}}
\label{sec:conclusion}

In this Section we prove Theorem~\ref{thm:main4 0 2}, which in turn provides the desired conclusion in Theorem~\ref{thm:main4 0}. 

\begin{proof}[Proof of Theorem \ref{thm:main4 0 2}]
Let us assume that conditions (B) and (C) do not hold. We will show that there must exist $\alpha \in G(K)$ with a Zariski dense orbit. 

Let $\vec{\beta} = (\vec{\beta}_1, \dots, \vec{\beta}_{r})$ where $\vec{\beta}_i \in C_i^{k_{0, j}}$ for every $j = 1,\dots,r$. At the expense of replacing $\Phi$ by a conjugate of the form $\tau_{\vec{\gamma}}^{-1}\circ \Phi\circ \tau_{\vec{\gamma}}$, where $\tau_{\vec{\gamma}}$ is a suitable translation map corresponding to a vector, we may assume that $\vec{\beta}_j:=(1,\dots, 1,\beta_{i_{0, 1}^{(j)}}^{(j)},1,\dots, 1,\beta_{i_{0, s_j}^{(j)}}^{(j)})\in C_j^{k_{0, j}}(K)$ for every $1 \le j \le r$. Since condition~(B) is not met then for every $j=1,\dots,r$ the $\beta_{i_{0, k}^{(j)}}^{(j)}$'s must be linearly independent over $\End(C_j)$. Indeed, suppose there exist $\sigma_1,\dots,\sigma_{s_j} \in \End(C_j)$ not all equal to zero such that 
\[
\sigma_1\left(\beta_{i_{0, 1}^{(j)}}^{(j)}\right) + \cdots + \sigma_{s_j}\left(\beta_{i_{0, s_j}^{(j)}}^{(j)}\right) = 0.
\]
If we let $\Pi: G \lra C_j^{k_{0, j}}$ be the natural projection map onto $C_j^{k_{0, j}}$, let $f_1: C_j^{k_{0, j}} \lra C_j$ be the map given by 
\[
\left(x_1,\dots, x_{i_{0, 1}^{(j)}-1},x_{i_{0, 1}^{(j)}},x_{i_{0, 1}^{(j)}+1},\dots, x_{i_{0, s_j}^{(j)}-1},x_{i_{0, s_j}^{(j)}}\right) \mapsto \sigma_1\left(x_{i_{0, 1}^{(j)}}\right) + \cdots + \sigma_{s_j}\left(x_{i_{0, s_j}^{(j)}}\right)  
\]
and $f_2:C_j \dra \Pl^1$ be a non-constant rational map it is clear that $\Phi_1$ is invariant under $f := f_2 \circ f_1 \circ \Pi$ which contradicts our initial assumption that condition (B) does not hold. So, we must have that for every $j=1,\dots,r$ the $\beta_{i_{0, k}^{(j)}}^{(j)}$'s are linearly independent over $\End(C_j)$.

Let   
$$\vec{\alpha}_{Q_j}:=\left(\gamma_1^{(j)},\dots, \gamma_{i_{0, 1}^{(j)}-1}^{(j)},1,\gamma_{i_{0, 1}^{(j)}+1}^{(j)},\dots, \gamma_{i_{0, 2}^{(j)}-1}^{(j)},1,\gamma_{i_{0, 2}^{(j)}+1}^{(j)},\dots, \gamma_{i_{0, s_j}^{(j)}-1}^{(j)},1\right)\in C_j^{k_{0,j}}(K)$$
where the $\gamma_k^{(j)}$'s are linearly independent over $\End(C_j)$ and also linearly independent with respect to the $\beta_{i_{0, k}^{(j)}}^{(j)}$'s and let 
\[
\vec{\alpha}_1 := \left(\vec{\alpha}_{Q_1}, \dots, \vec{\alpha}_{Q_r}\right).
\] 

For every $j=1,\dots,r$ let $\Gamma_j$ be the $\End(C_j)$-submodule of $C_j(K)$ generated by the $\beta_{i_{0, k}^{(j)}}^{(j)}$'s and all the $\gamma_k^{(j)}$'s. Since condition (C) is not met, $(\varphi_1, \varphi_2)$ satisfies the hypotheses of Theorem~\ref{thm:main3}. So, we can find $\vec{\alpha}_2 \in (G_1 \times G_2)(K)$ whose coordinates satisfy conditions~(i)-(ii) from the conclusion of Theorem~\ref{thm:main3} with respect to $\Gamma_1, \dots,\Gamma_r$. In particular, this means that the coordinates of $\vec{\alpha}_1$ (along with the $\beta_{i_{0, k}}^{(j)}$'s and the $\gamma_k^{(j)}$'s) satisfy the hypotheses of Proposition~\ref{prop:split}. Hence, the orbit of $\left(\vec{\alpha}_1\oplus\vec{\alpha}_2\right) \in (G_0 \times G_1 \times G_2)(K)$ under $(\Phi, \varphi_1, \varphi_2)$ must be Zariski dense in $G_0 \times G_1 \times G_2$, as claimed. This concludes our proof of Theorem~\ref{thm:main4 0 2}.
\end{proof}

As shown in Section~\ref{sec:reduction}, Theorem~\ref{thm:main4 0} is a  consequence of Theorem~\ref{thm:main4 0 2}.

%%%%%%%%%%%%%%%%%%%%%%%%%%%%%%%%%%%%%%%%%%%%%%%%%%%%%%%%%%%%%%%%%%%%%%%%%%%%%%%
%%%%%%%%%%%%%%%%%%%%%%%%%%%%%%%%%%%%%%%%%%%%%%%%%%%%%%%%%%%%%%%%%%%%%%%%%%%%%%%

\end{document}